\documentclass[11pt, a4paper, oneside, reqno]{amsart}

\usepackage[usenames, dvipsnames]{color}
\definecolor{darkblue}{rgb}{0.0, 0.0, 0.45}
\definecolor{lightblue}{RGB}{240,248,255}
\definecolor{lightblue2}{rgb}{0.68, 0.85, 0.9}
\definecolor{lightcyan}{rgb}{0.88, 1.0, 1.0}
\definecolor{palepink}{rgb}{0.98, 0.85, 0.87}

\usepackage[colorlinks	= true,
raiselinks	= true,
linkcolor	= darkblue, 
citecolor	= Mahogany,
urlcolor	= ForestGreen,
pdfauthor	= {Robert D. McAllister},
pdftitle	= {},
pdfkeywords	= {},
pdfsubject	= {},
plainpages	= false]{hyperref}

\usepackage{dsfont,amssymb,amsmath,enumitem} 
\usepackage{amsfonts,dsfont,mathtools, mathrsfs,amsthm} 
\usepackage[amssymb, thickqspace]{SIunits}
\usepackage{fancyhdr,mdframed,nicefrac}

\usepackage[norelsize, ruled, vlined]{algorithm2e}
\usepackage{algcompatible}

\usepackage{epsfig}
\usepackage{graphicx}
\usepackage{float}
\usepackage{caption}
\usepackage{subcaption}

\usepackage{courier}

\usepackage{multirow}
\usepackage{bigstrut}

\allowdisplaybreaks
\date{\today}
\addtolength{\voffset}{0cm} 
\addtolength{\textheight}{1cm} 
\addtolength{\hoffset}{-2cm}
\addtolength{\textwidth}{4cm}

\setlength{\parskip}{1.5mm}
\linespread{1.2}


\makeatletter
\def\@settitle{\begin{center}%
		\baselineskip14\p@\relax
		\normalfont\LARGE\scshape\bfseries
		\@title
	\end{center}%
}

\def\@setauthors{%
  \begingroup
  \def\thanks{\protect\thanks@warning}%
  \trivlist
  \centering\footnotesize \@topsep30\p@\relax
  \advance\@topsep by -\baselineskip
  \item\relax
  \author@andify\authors
  \def\\{\protect\linebreak}%
  \authors%
  \ifx\@empty\contribs
  \else
    ,\penalty-3 \space \@setcontribs
    \@closetoccontribs
  \fi
  \endtrivlist
  \endgroup
}

\makeatother
\makeatletter

\def\subsection{\@startsection{subsection}{2}%
	\z@{.5\linespacing\@plus.7\linespacing}{.5\linespacing}%
	{\normalfont\large\bfseries}}

\def\subsubsection{\@startsection{subsubsection}{3}%
	\z@{.5\linespacing\@plus.7\linespacing}{.5\linespacing}%
	{\normalfont\itshape}}

\usepackage{multirow}
\usepackage{bigstrut}
\usepackage{wrapfig}
\usepackage{caption}

\usepackage[numbers,sort&compress]{natbib}

\usepackage[noabbrev,capitalize]{cleveref}

\usepackage{makecell}

\newcommand{\bbR}{\mathbb{R}}
\newcommand{\bbI}{\mathbb{I}}

\newcommand{\bbX}{\mathbb{X}}
\newcommand{\bbU}{\mathbb{U}}
\newcommand{\bbZ}{\mathbb{Z}}
\newcommand{\bbW}{\mathbb{W}}

\newcommand{\bbB}{\mathbb{B}}

\newcommand{\dsP}{\mathds{P}}
\newcommand{\dsQ}{\mathds{Q}}

\newcommand{\bfx}{\mathbf{x}}
\newcommand{\bfu}{\mathbf{u}}
\newcommand{\bfw}{\mathbf{w}}
\newcommand{\bfv}{\mathbf{v}}

\newcommand{\bfM}{\mathbf{M}}
\newcommand{\bfA}{\mathbf{A}}
\newcommand{\bfB}{\mathbf{B}}

\newcommand{\bfG}{\mathbf{G}}

\newcommand{\bfK}{\mathbf{K}}
\newcommand{\bfP}{\mathbf{P}}

\newcommand{\bfSigma}{\boldsymbol{\Sigma}}

\newcommand{\bfS}{\mathbf{S}}

\newcommand{\calZ}{\mathcal{Z}}

\newcommand{\calX}{\mathcal{X}}
\newcommand{\calM}{\mathcal{M}}

\newcommand{\calD}{\mathcal{D}}
\newcommand{\calP}{\mathcal{P}}
\newcommand{\calB}{\mathcal{B}}

\newcommand{\calQ}{\mathcal{Q}}

\newcommand{\calJ}{\mathcal{J}}

\newcommand{\calK}{\mathcal{K}}

\newcommand{\ev}{\mathbb{E}}

\newcommand{\tr}{\textnormal{tr}}

\newtheorem{theorem}{Theorem}[section]
\newtheorem{corollary}[theorem]{Corollary}

\newtheorem{lemma}{Lemma}[section]
\newtheorem{proposition}[lemma]{Proposition}

\theoremstyle{definition}
\newtheorem{definition}{Definition}[section]
\newtheorem{remark}{Remark}[section]

\newtheorem{assumption}{Assumption}[section]

\crefname{equation}{}{}
\crefname{Equation}{}{}
\crefname{assumption}{Assumption}{Assumptions}
\crefname{definition}{Definition}{Definitions}
\crefname{section}{Section}{Sections}
\crefname{conjecture}{Conjecture}{Conjectures}

\title[Distributionally Robust Model Predictive Control]{Distributionally Robust Model Predictive Control: \\ Closed-loop Guarantees and Scalable Algorithms}
\author{Robert D. McAllister and Peyman Mohajerin Esfahani}
\thanks{The authors are with the Delft Center for Systems and Control, Delft University of Technology, Delft, The Netherlands. Emails: \href{mailto:R.D.Mcallister@tudelft.nl}{R.D.Mcallister@tudelft.nl}, \href{mailto:P.MohajerinEsfahani@tudelft.nl}{P.MohajerinEsfahani@tudelft.nl}. This research is supported by the European Research Council (ERC) under grant TRUST-949796.}


\begin{document}

\begin{abstract}
    We establish a collection of closed-loop guarantees and propose a scalable optimization algorithm for distributionally robust model predictive control (DRMPC) applied to linear systems, convex constraints, and quadratic costs. Via standard assumptions for the terminal cost and constraint, we establish distribtionally robust long-term and stage-wise performance guarantees for the closed-loop system. We further demonstrate that a common choice of the terminal cost, i.e., via the discrete-algebraic Riccati equation, renders the origin input-to-state stable for the closed-loop system. This choice also ensures that the exact long-term performance of the closed-loop system is \emph{independent} of the choice of ambiguity set for the DRMPC formulation. Thus, we establish conditions under which DRMPC does \emph{not} provide a long-term performance benefit relative to stochastic MPC. To solve the DRMPC optimization problem, we propose a Newton-type algorithm that empirically achieves \textit{superlinear} convergence and guarantees the feasibility of each iterate. We demonstrate the implications of the closed-loop guarantees and the scalability of the proposed algorithm via two examples. To facilitate the reproducibility of the results, we also provide open-source code to implement the proposed algorithm and generate the figures.
\end{abstract}

\maketitle

\smallskip
\noindent \textbf{Keywords.} Model predictive control, distributionally robust optimization, closed-loop stability, second-order algorithms

\section{Introduction}

Model predictive control (MPC) defines an implicit control law via a finite horizon optimal control problem. This optimal control problem is defined by the stage cost $\ell(x,u)$, state/input constraints, and a linear discrete-time dynamical model
\begin{equation*}
    x^+ = Ax+Bu+Gw, 
\end{equation*}
in which $x$ is the state, $u$ is the manipulated input, and $w$ is the disturbance. 
The primary difference between variants of MPC (e.g., nominal, robust, and stochastic MPC) is their approach to modeling the disturbance $w$ in the optimization problem. 

In nominal MPC, the optimization problem uses a nominal dynamical model, i.e., $w=0$. Nonetheless, feedback affords nominal MPC a nonzero margin of \textit{inherent} robustness to disturbances \citep{grimm:messina:tuna:teel:2004,yu:reble:chen:allgower:2014,
allan:bates:risbeck:rawlings:2017}. This nonzero margin, however, may be insufficient in certain safety-critical applications with high uncertainty. Robust MPC (RMPC) and stochastic MPC (SMPC) offer a potential means to improve on the inherent robustness of nominal MPC by characterizing the disturbance and including this information in the optimal control problem. 

RMPC describes the disturbance via a set $\bbW$ and requires that the state and input constraints in the optimization problem are satisfied for all possible realizations of $w\in\bbW$. The objective function of RMPC considers only the nominal system ($w=0$) and these methods are sometimes called tube-based MPC if the constraint tightening is computed offline \citep{mayne:seron:rakovic:2005,goulart:kerrigan:maciejowski:2006}.

SMPC includes a stochastic description of the disturbance $w\sim \dsP$ ($w$ is distributed according to the probability distribution $\dsP$) and defines the objective function based on the expected value of the cost function subject to this distribution \citep{cannon:kouvaritakis:wu:2009,farina:giulioni:scattolini:2016,mesbah:2016,lorenzen:dabbene:tempo:allgower:2016}. The performance of SMPC therefore depends on the disturbance distribution $\dsP$. This stochastic description of the disturbance also permits the use of so-called chance constraints. While SMPC refers to a range of methods, characterized by their use of a distribution in the optimization problem, we focus specifically on SMPC for linear systems, quadratic costs, robust constraints, and with expected value objective functions. We do not consider chance constraints. Analogous to nominal MPC, feedback affords SMPC a small margin of inherent distributional robustness, i.e., robustness to inaccuracies in the disturbance distribution \citep{mcallister:rawlings:2023}. If this distribution is identified from limited data, however, there may be significant distributional uncertainty. 
Therefore, a distributionally robust (DR) approach to the SMPC optimization problem may provide desirable benefits in applications with high uncertainty and limited data. 

Advances in distributionally robust optimization (DRO) have inspired a range of distributionally robust MPC (DRMPC) formulations. In general, these problems take the form
\begin{equation}
    \min_{\theta\in\Pi(x)} \max_{\dsP\in\calP}\ev_{\dsP}[J(x,\theta,\bfw)],
    \label{eq:dro_intro} 
\end{equation}
in which $x$ is the current state of the system, $\theta$ defines the control inputs for the MPC horizon (potentially as parameters in a previously defined feedback law), $\ev_{\dsP}\left[\cdot\right]$ denotes the expected value with respect to the distribution $\dsP$, and $\calP$ is the ambiguity set for the distribution $\dsP$ of the disturbances $\bfw$. The goal is to select $\theta$ to minimize the worst-case expected value of the cost function $J(\cdot)$ and satisfy the constraints embedded in the set $\Pi(x)$. Note that SMPC and RMPC are special cases of DRMPC via specific choices of $\calP$. 

The key feature of all MPC formulations is that the finite horizon optimal control problem in \cref{eq:dro_intro} is solved with an updated state estimate at each time step, i.e., a rolling horizon approach. With this approach, DRMPC defines an implicit feedback control law $\kappa(x)$ and the closed-loop system

\begin{equation}\label{eq:fcl}
    x^+ = Ax+B\kappa(x)+Gw.
\end{equation}

The performance of this controller is ultimately defined by this closed-loop system and the stage cost. In particular, we often define performance based on the expected average closed-loop stage cost at time $k\geq 1$, i.e., 
\begin{equation*}
    \calJ_k(\dsP) := \ev_{\dsP}\left[\frac{1}{k}\sum_{i=0}^{k-1}\ell\Big(\phi(i),\kappa(\phi(i))\Big)\right],
\end{equation*}
in which $\phi(i)$ is the closed-loop state trajectory defined by \cref{eq:fcl} and $\dsP$ is the distribution for the closed-loop disturbance.
 
In this work, we focus on DRMPC formulations for linear systems with additive disturbances and quadratic costs. We note that there are also DRMPC formulations that consider parameteric uncertainty  \citep{coulson:lygeros:dorfler:2021} and piecewise affine cost functions are also considered in \citep{micheli:summers:lygeros:2022}. In both cases, the proposed DRMPC formulation solves for only a single input trajectory for all realizations of the disturbance.  

To better address the realization of uncertainty in the open-loop trajectory, RMPC/SMPC formulations typically solve for a trajectory of parameterized control policies instead of a single input trajectory. 
A common choice of this parameterization is the state-feedback law $u=Kx+v$ in which $K$ is the fixed feedback gain and the parameter to be optimized is $v$. Using this parameterization, several DRMPC formulations were proposed to tighten probabilistic constraints for linear systems based on different ambiguity sets \citep{mark:liu:2020,tan:yang:chen:li:2022,fochesato:lygeros:2022,li:tan:wu:duan:2021}. In these formulations, however, the cost function is unaltered from the corresponding SMPC formulation due to the fixed feedback gain in the control law parameterization.

If the control law parameterization is chosen as a more flexible feedback affine policy (see \cref{eq:feedbackaffine}), first proposed for MPC formulations in \citep{goulart:kerrigan:maciejowski:2006}, distributional uncertainty in the cost function is nontrivial to the DRMPC problem. \citet{vanparys:kuhn:goulart:morari:2015} propose a tractable method to solve linear quadratic control problems with unconstrained inputs and a distributionally robust chance constraint on the state. \Citet{coppens:patrinos:2021} consider a disturbance feedback affine parameterization with conic representable ambiguity sets and demonstrate a tractable reformulation of the DRMPC problem. \citet{mark:liu:2022} consider a similar formulation with a simplified ambiguity set and also establish some performance guarantees for the closed-loop system. \citet{tacskesen:iancu:kocyigit:kuhn:2023} demonstrate that for \emph{unconstrained} linear systems, additive disturbances, and quadratic costs, a linear feedback law is optimal and can be found via a semidefinite program (SDP). \citet{pan:faulwasser:2023} use polynomial chaos expansion to approximate and solve the DRO problem. 

While these new formulations are interesting, there remain important questions about the efficacy of including yet another layer of uncertainty in the MPC problem. For example, what properties should DRMPC provide to the closed-loop system in \cref{eq:fcl}? And what conditions are required to achieve these properties? Due to the rolling horizon nature of DRMPC, distributionally robust closed-loop properties are not necessarily obtained by simply solving a DRO problem. Moreover, if SMPC has an incorrect distribution, the conditions under which DRMPC may provide closed-loop performance benefits relative to this SMPC implementation are not well understood. 

One of the main contributions of this paper is to provide greater insight into these questions. The focus is on the performance benefits and guarantees that may be obtained by considering distributional uncertainty in the cost function \cref{eq:dro_intro}. Chance constraints are therefore not considered in the proposed DRMPC formulation or analysis.

DRMPC is also limited by practical concerns related to the computational cost of solving DRO problems. While these DRMPC problems can often be reformulated as convex optimization problems, in particular SDPs, these optimization problems are often significantly more difficult to solve relative to the quadratic programs (QPs) that are ubiquitous in nominal, robust, and stochastic MPC problems. 

In this work, we consider a DRMPC formulation for linear dynamical models, additive disturbances, robust (convex) constraints, and quadratic costs. This DRMPC formulation uses a Gelbrich ambiguity set with fixed first moment (zero mean) as a conservative approximation for a Wasserstein ball of the same radius \citep{gelbrich:1990}.  
The key contributions of this work are (informally) summarized in the following two categories.
\begin{enumerate}[itemsep = 1mm, topsep = 1mm, leftmargin = 5mm]
\item \textbf{Closed-loop guarantees:} 
    \begin{enumerate}[label=(\theenumi\alph*), itemsep = 1mm, topsep = 1mm, leftmargin = 2mm]
    \item \textbf{Distributionally robust long-term performance.} We establish sufficient conditions for DRMPC, in particular the terminal cost and constraint, such that the closed-loop system satisfies a distributionally robust long-term performance bound (\cref{thm:performance}), i.e., we define a function $C(\dsP)$ such that
        \begin{equation}\label{eq:intro_performance}
            \limsup_{k\rightarrow\infty} \calJ_k(\dsP) \leq \max_{\tilde{\dsP}\in\calP} C(\tilde{\dsP}),
        \end{equation}
        for all $\dsP\in\calP$. This bound is \textit{distributionally robust} because it holds for all distributions $\dsP\in\calP$.  
        
        \item \textbf{Distributionally robust stage-wise performance.}  If the stage cost is also positive definite, we establish that the closed-loop system satisfies a distributionally robust stage-wise performance bound (\cref{thm:resie}), i.e., there exists $\lambda\in(0,1)$ and $c,\gamma>0$ such that
        \begin{equation}\label{eq:intro_resie}
            \ev_{\dsP}\left[\ell\big(\phi(k),\kappa(\phi(k))\big)\right] \leq \lambda^k c|\phi(0)|^2 + \max_{\tilde{\dsP}\in\calP} \gamma C(\tilde{\dsP}),
        \end{equation}
        for all $\dsP\in\calP$.
        Moreover, this result directly implies that the closed-loop system is distributionally robust, \textit{mean-squared} input-to-state stable (ISS) (\cref{cor:mss}), i.e., the left-hand side of \cref{eq:intro_resie} becomes $\ev_{\dsP}\left[|\phi(k)|^2\right]$.
        
        \item \textbf{Pathwise input-to-state stability.} A common approach in MPC design is to select the terminal cost via the discrete-algebraic riccati equation (DARE) for the linear system. Under these conditions, we establish that the closed-loop system is in fact (pathwise) ISS (\cref{thm:iss}), a stronger property than mean-squared ISS.
        
        \item \textbf{Exact long-term performance.}
        Given this stronger property of (pathwise) ISS, we can further establish an \textit{exact} value for the long-term performance of DRMPC based on this terminal cost and the closed-loop disturbance distribution (\cref{thm:performance_opt}), i.e.,
        \begin{equation}\label{eq:intro_exact}
            \lim_{k\rightarrow\infty} \calJ_k(\dsP) = C(\dsP)
        \end{equation}
        for all distributions $\dsP$ supported on $\bbW$. 
        Of particular interest is the fact that this result is \textit{independent} of the choice of ambiguity set $\calP$. Thus, the long-term performance of DRMPC, SMPC, and RMPC are \textit{equivalent} for this choice of terminal cost (\cref{cor:equal}). 
    \end{enumerate}  
    
    \item \textbf{Scalable Newton-type (NT) algorithm.} We present a novel optimization algorithm tailored to solve the DRMPC problem of interest (\cref{alg:NT}). In contrast to Frank-Wolfe (FW) algorithms previously proposed to solve DRO problems (e.g., \citep{nguyen:shafieezadeh:kuhn:mohajerin:2023,sheriff:mohajerin:2023}), the proposed algorithm solves a QP at each iteration instead of an LP. 
    The NT algorithm achieves \textit{superlinear} (potentially quadratic) convergence in numerical experiments (\cref{fig:convergence}) and reduces computation time by more than 50\% compared to solving the DRMPC problem as an LMI optimization problem with state-of-the art solvers, i.e., MOSEK (\cref{fig:comparison}). The proposed NT algorithm and code to generate the figures in this paper can be found in the {\em GitHub Repository}~\url{https://github.com/rdmcallister/drmpc}. 
\end{enumerate}

\paragraph*{\textbf{Organization}} In \cref{s:problem}, we introduce the DRMPC problem formulation and associated DRO problem. In \cref{s:cl}, we present the main technical results on closed-loop guarantees. In \cref{s:cl_proofs}, we provide the technical proofs and supporting lemmata for these results. In \cref{s:algorithms}, we discuss the DRO problem of interest and introduce the proposed NT algorithm. In \cref{s:examples}, we study two examples to demonstrate the closed-loop properties established in \cref{s:cl} and the scalability of the proposed algorithm. 

\paragraph*{\textbf{Notation}} Let $\bbR$ denote the reals and subscripts/superscripts denote the restrictions/dimensions for the reals, i.e., $\bbR^n_{\geq 0}$ is the set of non-negative reals with dimension $n$. The transpose of a matrix $M\in\bbR^{n\times m}$ is denoted $M'$. The trace of a square matrix $M\in\bbR^{n\times n}$ is denoted $\tr(M)$. A positive (semi)definite matrix $M\in\bbR^{n\times n}$ is denoted by $M\succ 0$ ($M\succeq 0$). For $M\succeq 0$, let $|x|_M^2$ denote the quadratic form $x'Mx$. A function $\alpha:\bbR_{\geq 0}\rightarrow\bbR_{\geq 0}$ is said to be in class $\calK$, denoted $\alpha(\cdot)\in\calK$, if $\alpha(\cdot)$ is continuous, strictly increasing, and $\alpha(0)=0$. If $\dsP$ is a probability distribution (measure), $\dsP(A)$ denotes the probability of event $A$. For any (Borel measurable) function $g:\bbW\rightarrow\bbR_{\geq 0}$ of the random variable $w$ with the distribution $\dsP$ and support $\bbW$, expected value is defined as as the (Lebesgue) integral $\ev_{\dsP}\left[g(w)\right] := \int_{\bbW}g(w)\dsP(dw)$. Let $\ev_{\dsP}\left[\cdot\mid x\right]$ denote the expected value given $x$. 
The random variables $w$, $\bfw$, $\bfw_{\infty}$ (and functions thereof) represent specific realizations unless they are found inside an operation such as expected value (i.e., $\ev_{\dsP}\left[\cdot\right]$) or the probability of an event $A$ (i.e., $\dsP(A)$). 

\section{Problem Formulation}\label{s:problem}

We consider the linear system with additive disturbances
\begin{subequations}
    \label{system}
\begin{equation}\label{eq:f}
    x^+ = Ax + Bu + Gw,
\end{equation}
where $x\in\bbR^n$, $u\in\bbU\subseteq\bbR^m$, and $w\in\bbW\subseteq\bbR^q$. We also consider state/input constraints 
\begin{equation}\label{eq:bbZ}
    (x,u) \in \bbZ \subseteq \bbR^n \times \bbU,
\end{equation}
\end{subequations}
and terminal constraint $\bbX_f\subseteq\bbR^n$ that satisfy the following assumption throughout this paper. 

\begin{assumption}[Dynamics regularities] \label{as:convex}
The system dynamics~\eqref{eq:f} and state-input sets~\eqref{eq:bbZ} satisfy the following:
\begin{enumerate}[label=(\roman*), itemsep = 1mm, topsep = 1mm, leftmargin = 5mm]

\item {\bf Disturbance statistics:} The disturbances $w$ in~\eqref{eq:f} are zero mean random variables, independent in time, and satisfy $w\in\bbW$ with probability one.

\item {\bf Convex state-input constraints:} The sets $\bbU$, $\bbW$, $\bbZ$, and $\bbX_f$ in \eqref{eq:bbZ} are closed, convex, and contain the origin. The sets $\bbU$ and $\bbW$ are bounded and $\bbW$ contains the origin in its interior. 
\end{enumerate}
\end{assumption}

Since we are permitting input constraints and open-loop unable systems, we require a bounded support $\bbW$ for the disturbance.\footnote{If we consider only open-loop stable systems and chance constraints, we can potentially included unbounded disturbances.} While convex sets are not strictly required for some of the following theoretical results, they are important for efficient computation. 
To ensure constraint satisfaction, we use a disturbance feedback parameterization \citep{goulart:kerrigan:maciejowski:2006} as
\begin{equation}\label{eq:feedbackaffine}
    u(i) = v(i) + \sum_{j=0}^{i-1}M(i,j)w(j),
\end{equation}
in which $v(i)\in\bbR^m$ and $M(i,j)\in\bbR^{m\times q}$.
With this parameterization and a finite horizon $N\geq 1$, the input sequence $\bfu:=(u(0),u(1),\dots,u(N-1))$ is defined as 
\begin{equation}\label{eq:bfu}
    \bfu = \bfM \bfw + \bfv,
\end{equation}
where $\bfw:=(w(0),w(1),\dots,w(N-1))$ is the disturbance trajectory. 
Note that the structure of $\bfM$ must satisfy the following requirements to enforce causality: 
\begin{equation*}
   (\bfM,\bfv)\in \Theta := \left\{(\bfM,\bfv) \ \middle| \ \begin{matrix} \bfM\in\bbR^{Nm\times Nq}, \ \bfv\in\bbR^{Nm}  \\ \ M(i,j)=0 \ \forall j \geq i \end{matrix}\right\}\,.
\end{equation*}
The state trajectory $\bfx:=(x(0),x(1),\dots,x(N))$ is therefore
\begin{equation}\label{eq:bfx}
    \bfx = \bfA x + \bfB\bfv + (\bfB\bfM + \bfG)\bfw,
\end{equation}
and the constraints for this parameterization are given by
\begin{equation*}
	\Pi(x) := \bigcap_{\bfw\in\bbW^N} \left\{
	(\bfM,\bfv)\in\Theta\ \middle| \ \begin{matrix} 
	\textnormal{s.t. } \cref{eq:bfu}, \ \cref{eq:bfx} \\
	(x(k),u(k))\in\bbZ \ \forall \ k \\
	x(N) \in \bbX_f \end{matrix} \right\}\,.
\end{equation*}
That is, if $(\bfM,\bfv)\in\Pi(x)$ then the constraints in \cref{eq:bbZ} are satisfied for all realizations of the disturbance trajectory $\bfw\in\bbW^N$. We also define the feasible set
\begin{equation*}
    \calX := \{x\in\bbR^n \mid \Pi(x)\neq\emptyset \},
\end{equation*}
and note that, by definition, $\Pi(x)$ is nonempty for all $x\in\calX$. To streamline notation, we define 
\begin{equation*}
    \theta := (\bfM,\bfv)\in\Pi(x)\,.
\end{equation*}

\begin{lemma}[Policy constraints]\label{lem:compact}
    If \cref{as:convex}(ii) holds, then $\Pi(x)$ is compact and convex for all $x\in\calX$ and $\calX$ is closed and convex. 
\end{lemma}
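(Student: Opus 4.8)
The plan is to establish the three claimed properties—compactness and convexity of $\Pi(x)$ for each $x\in\calX$, and closedness and convexity of $\calX$—by exploiting the linear structure of the maps \eqref{eq:bfu}, \eqref{eq:bfx} together with the convexity and closedness granted by \cref{as:convex}(ii). The key observation is that, for a fixed disturbance realization $\bfw$ and fixed state $x$, both $\bfu$ and $\bfx$ are \emph{affine} functions of $\theta=(\bfM,\bfv)$. Consequently the constraint $(x(k),u(k))\in\bbZ$ for a single $\bfw$ and single $k$ is the preimage of the convex closed set $\bbZ$ under an affine map of $\theta$, hence convex and closed; likewise $x(N)\in\bbX_f$ is the preimage of the convex closed set $\bbX_f$. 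The set $\Theta$ itself is a linear subspace (defined by the linear equalities $M(i,j)=0$ for $j\ge i$), hence convex and closed.

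For convexity of $\Pi(x)$, I would write $\Pi(x)$ as the intersection over $\bfw\in\bbW^N$ and over indices $k$ of these individual convex closed sets, together with $\Theta$. An arbitrary intersection of convex sets is convex and an arbitrary intersection of closed sets is closed, so $\Pi(x)$ is convex and closed for every $x\in\calX$. For compactness it remains to show boundedness. Here I would use the input constraint: since $\bbU$ is bounded and $\bbW$ contains the origin in its interior, evaluating the constraint $u(i)\in\bbU$ at $\bfw=0$ forces $\bfv$ to lie in the bounded set $\bbU^N$, and evaluating at small perturbations $\bfw=\pm\varepsilon e_j$ (which remain in $\bbW^N$ because $0\in\mathrm{int}\,\bbW$) bounds each column of $\bfM$ through the bounded variation of $u(i)=v(i)+\sum_j M(i,j)w(j)$. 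Combining the bound on $\bfv$ with the bounds on the entries of $\bfM$ yields boundedness of $\Pi(x)$; a closed and bounded subset of the finite-dimensional space $\Theta$ is compact.

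For the properties of $\calX$, I would note that $\calX$ is the projection onto the $x$-coordinate of the set
\begin{equation*}
    \calF := \bigcap_{\bfw\in\bbW^N}\left\{(x,\theta)\in\bbR^n\times\Theta \ \middle|\ (x(k),u(k))\in\bbZ\ \forall k,\ x(N)\in\bbX_f\right\},
\end{equation*}
where $x(k)$, $u(k)$ are now jointly affine in $(x,\theta)$. By the same preimage-of-convex-closed-set argument, $\calF$ is convex and closed, and convexity of $\calX$ follows because the projection of a convex set is convex.

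The main obstacle is \emph{closedness} of $\calX$: projections of closed sets need not be closed in general, so I cannot simply invoke the projection of $\calF$. The cleanest remedy is to leverage the compactness already established for the fibers. Since $\Pi(x)$ is bounded \emph{uniformly} enough in $x$—the bounds on $\bfv$ and $\bfM$ derived above depend only on $\bbU$ and $\bbW$, not on $x$—the set $\calF$ has the property that its fibers over $x$ lie in a fixed compact set $K\subset\Theta$. Closedness of a projection follows from closedness of the full set together with such a uniform boundedness (properness) of the fibers: if $x_n\to x$ with $x_n\in\calX$, pick $\theta_n\in\Pi(x_n)\subseteq K$, extract a convergent subsequence $\theta_n\to\theta\in K$ by compactness, and use closedness of $\calF$ to conclude $(x,\theta)\in\calF$, whence $x\in\calX$. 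I would make this the crux of the argument, as the remaining convexity and per-fiber claims are routine consequences of affineness and the intersection-of-convex-sets principle.
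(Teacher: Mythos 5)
Your proof is correct, but it takes a more self-contained route than the paper. The paper disposes of closedness and convexity of both $\Pi(x)$ and $\calX$ in one line by citing \citet[Thm.~3.5]{goulart:2007}, and then devotes its entire proof to the boundedness of $\Pi(x)$ --- using the same mechanism you do (evaluate the input constraint at $\bfw=0$ to force $\bfv\in\bbU^N$, then use a small ball $B_\delta\subseteq$ the disturbance set, guaranteed by $0\in\mathrm{int}\,\bbW$, to bound $\lVert\bfM\rVert_2\leq 2b/\delta$; your column-wise perturbations $\pm\varepsilon e_j$ are an equivalent implementation). What your approach buys is a proof of the parts the paper outsources: the affine-preimage/intersection argument for convexity and closedness of $\Pi(x)$ and $\calF$, and --- most substantively --- the closedness of $\calX$, where you correctly flag that a projection of a closed set need not be closed and resolve it by noting that the bound $\Pi(x)\subseteq\mathbb{M}\times\bbU^N$ is uniform in $x$, so the fibers live in a fixed compact set and a subsequence-extraction argument closes the gap. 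This is an elegant economy: the same uniform boundedness that the paper proves only for compactness of $\Pi(x)$ does double duty in your argument to yield closedness of $\calX$, making the whole lemma independent of the external reference (which, as the paper itself remarks, uses a slightly different formulation and set of assumptions). The paper's approach buys brevity; yours buys verifiability under exactly the stated hypotheses.
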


See \cref{s:appendix} for the proof. Note that \cref{lem:compact} uses a slightly different formulation and set of assumptions than in \citep{goulart:kerrigan:maciejowski:2006}, and we are therefore able to establish that $\Pi(x)$ is also bounded. Moreover, if $\bbZ$ and $\bbX_f$ are polyhedral and $\bbW$ is a polytope, then $\Pi(x)$ is also a (bounded) polytope, which can be easily expressed via linear constraints \citep[Example 7]{goulart:kerrigan:maciejowski:2006}. We consider quadratic stage and terminal costs defined as
\begin{equation*}
    \ell(x,u) = x'Qx + u'Ru \quad \text{and}\quad  V_f(x) = x'Px,
\end{equation*}
with the following standard assumption. 
\begin{assumption}[Positive semidefinite cost]\label{as:cost}
    The matrices $Q$, $R$, and $P$ are positive semidefinite ($Q,R,P\succeq 0$).
\end{assumption}

For a given input and disturbance trajectory, we have the following deterministic cost function: 
\begin{equation*}
    \Phi(x,\bfu,\bfw) := \sum_{k=0}^{N-1}\ell(x(k),u(k)) + V_f(x(N))\,.
\end{equation*}
If we embed the disturbance feedback parameterization in this cost function, we have from \cref{as:cost}
\begin{align*}
    J(x,\theta,\bfw) & := \Phi(x,\bfM\bfw + \bfv,\bfw) \\
    & = |H_xx+H_u\bfv + (H_u\bfM + H_w)\bfw|^2,
\end{align*}
with constant matrices $H_x$, $H_u$, and $H_w$.
Let $\calM(\bbW)$ denote all probability distributions of $w$ with zero mean and $w\in\bbW$ with probability one, i.e.,
\begin{equation*}
    \calM(\bbW) := \left\{\dsP \ \middle| \ \ev_{\dsP}\left[w\right]=0, \ \dsP\left(w\in\bbW\right)=1\right\}\,.
\end{equation*}
For any distribution $\dsP\in\calM(\bbW^N)$ of $\bfw$ with covariance $\bfSigma:=\ev_{\dsP}\left[\bfw\bfw'\right]$, we have
\begin{equation}
    \label{eq:L_obj}
    L(x,\theta,\bfSigma) := \ev_{\dsP}\left[J(x,\theta,\bfw)\right] = |H_xx+H_u\bfv|^2 
    + \tr\Big((H_u\bfM+H_w)'(H_u\bfM+H_w)\bfSigma\Big)\,. 
\end{equation}
Note that $L(x,\theta,\bfSigma)$ is quadratic in $\theta$ and linear in $\bfSigma$. In SMPC, we minimize $L(x,\theta,\bfSigma)$ for a specific covariance $\bfSigma$ and the current state $x$. 

For DRMPC, we instead consider a worst-case version of the SMPC problem in which $\dsP$ takes the worst value within some ambiguity set. 
To define this ambiguity set, we first consider the Gelbrich ball for the covariance of a single disturbance $w\in\bbR^q$ centered at the nominal covariance $\widehat{\Sigma}\in\bbR^{q\times q}$ with radius $\varepsilon\geq 0$ defined as
\begin{equation*}
    \bbB_{\varepsilon}(\widehat{\Sigma}) := \Big\{\Sigma \succeq 0 ~|~ \ \tr\Big(\widehat{\Sigma} + \Sigma - 2\big(\widehat{\Sigma}^{1/2}\Sigma\widehat{\Sigma}^{1/2}\big)^{1/2}\Big) \leq \varepsilon^2 \Big\}\,.
\end{equation*}
To streamline notation, we define
\begin{equation*}
    \bbB_d := \bbB_{\varepsilon}(\widehat{\Sigma}), \qquad \text{where} \qquad d:=\big(\varepsilon,\widehat{\Sigma}\big)\,.
\end{equation*}
This Gelbrich ball produces the following Gelbrich ambiguity set for the distributions of $w$: 
\begin{equation*}
     \calP_d := \left\{\dsP\in\calM(\bbW) \ \middle| \ \ev_{\dsP}[ww']=\Sigma\in\bbB_d\right\}\,.
\end{equation*}

We further assume that this Gelbrich ambiguity set is compatible with $\bbW$, i.e., all covariances $\Sigma\in\bbB_d$ can be achieved by at least one distribution $\dsP\in\calM(\bbW)$. For example, in the extreme case that $\bbW=\{0\}$ then $\calM(\bbW)$ contains only one distribution with all the weight at zero and the only reasonable Gelbrich ball to consider is $\bbB_d=\{0\}$. Formally, we consider only ambiguity parameters $d\in\calD$ with
\begin{equation*}
    \calD := \left\{ (\varepsilon,\widehat{\Sigma}) \ \middle| \begin{matrix} d=(\varepsilon,\widehat{\Sigma}), \ \varepsilon\geq 0, \ \widehat{\Sigma}\succeq 0, \\
    \ \forall \ \Sigma\in\bbB_{d} \ \exists \ \dsP\in\calM(\bbW) \ \textnormal{s.t. } \ev_{\dsP}[ww']=\Sigma \end{matrix} \right\} \,.
\end{equation*}
Note that $\calD$ depends on $\bbW$, but we suppress this dependence to streamline the notation. If $d\in\calD$, then for any $\Sigma\in\bbB_d$ there exists $\dsP\in\calP_d$ such that $\ev_{\dsP}\left[ww'\right]=\Sigma$.

For the disturbance trajectory $\bfw\in\bbW^N$, we define the following ambiguity set that enforces independence in time:  
\begin{equation*}
     \calP_d^N := \prod_{k=0}^{N-1}\calP_d = \left\{\dsP\in\calM(\bbW^N) \ \middle| \ \begin{matrix}
        \ev_{\dsP}[w(k)w(k)'] \in \bbB_d \\
        w(k) \textrm{ are independent}
    \end{matrix}\right\}\,.
\end{equation*}
We can equivalently represent $\calP_d^N$ as
\begin{equation*}
    \calP_d^N = \left\{\dsP\in\calM(\bbW^N) \ \middle| \ \begin{matrix} \ev_{\dsP}\left[\bfw \bfw'\right]=\bfSigma\in\bbB_d^N \\ w(k) \textrm{ are independent}
    \end{matrix}\right\}
\end{equation*}
in which the set~$\bbB^N_d$ is defined as
\begin{equation}
\label{B^N_d}    
    \bbB^N_d := \Big\{\bfSigma = \textnormal{diag} \big(\big[\Sigma_0 ~\cdots~ \Sigma_{N-1}\big] \big) ~|~ \Sigma_k\in\bbB_{d} \, \forall k\Big\} \,.
\end{equation}

The worst-case expected cost is defined as
\begin{equation}\label{eq:V_d}
    V_d(x,\theta) := \max_{\dsP\in\calP_d^N} \ev_{\dsP}\left[J(x,\theta,\bfw)\right] = \max_{\bfSigma\in\bbB^N_d} L(x,\theta,\bfSigma)\,.
\end{equation}
The two maximization problems in \cref{eq:V_d} are equal because $d\in\calD$. 
We now define the DRO problem for DRMPC as 
\begin{subequations}
\label{eq:dro}
\begin{align}
    V_d^0(x) & := \min_{\theta\in\Pi(x)}\max_{\dsP\in\calP_d^N} \ev_{\dsP}\left[J(x,\theta,\bfw)\right]     \label{eq:dro_E}\\
    & = \min_{\theta\in\Pi(x)} \max_{\bfSigma\in\bbB^N_d} L(x,\theta,\bfSigma) \label{eq:dro_L}\\
    & = \min_{\theta\in\Pi(x)} V_d(x,\theta) = V_d(x,\theta^*), \quad \theta^* \in \theta^0_d(x) \label{eq:dro_theta},
\end{align}    
\end{subequations}
where the function~$L$ in \eqref{eq:dro_L} is defined in \eqref{eq:L_obj}, and the set~$\theta^0_d(x)$ in \eqref{eq:dro_theta} denotes the solution set of the (outer) minimization of the worst-case expected loss~$V_d(x,\theta) := \max_{\bfSigma\in\bbB^N_d} L(x,\theta,\bfSigma)$.  
Note that SMPC ($d=(0,\widehat{\Sigma})$) and RMPC ($d=(0,0)$) are special cases of the optimization problem in \cref{eq:dro}. Thus, all subsequent statements about DRMPC include SMPC and RMPC as special cases of $d\in\calD$.
Fundamental mathematical properties for this optimization problem are provided in \cref{app:problem}. In \cref{app:problem}, we establish existence (but not uniqueness) of a minimizer for \cref{eq:dro}, continuity of $V^0_d(x)$ w.r.t. $x\in\calX$, and measurability of (set-valued) mapping $\theta^0_d(x)$ w.r.t. $x\in\calX$. 

\section{Closed-loop guarantees: Main results}\label{s:cl}

\subsection{Preliminaries and closed-loop system}

We now define the controller and closed-loop system derived from this DRMPC formulation. We assume state feedback and the control law is defined as the \textit{first} input given by the optimal control law parameterization $\theta^0_d(x)$. Although $\theta^0_d(x)$ may be set-valued, i.e., there are multiple solutions, we assume that some selection rule is applied such that the control law $\kappa_d:\calX\rightarrow\bbU$ is a single-valued function that satisfies
\begin{equation*}
    \kappa_d(x) \in \left\{v^0(0)\mid (\bfM^0,\bfv^0)\in\theta^0_d(x)\right\}\,.
\end{equation*}

With this control law, the closed-loop system is
\begin{equation}\label{eq:cl}
    x^+ = Ax + B\kappa_d(x) + Gw\,.
\end{equation}
Let the function $\phi_d(k;x,\bfw_{\infty})$ denote the closed-loop state of \cref{eq:cl} at time $k\geq 0$, for the initial state $x\in\calX$ and the disturbance trajectory $\bfw_{\infty}\in\bbW^\infty$, i.e., a disturbance trajectory in the $\ell^{\infty}$ space of sequences. Define the infinity norm of the sequence $\bfw_{\infty}$ as $||\bfw_{\infty}||:=\sup_{k\geq 0} |w(k)|$. Note that the deterministic value of $\phi_d(k;x,\bfw_{\infty})$ for a realization of $\bfw_{\infty}\in\bbW^{\infty}$ is a function of $d\in\calD$ via the DRMPC control law. 

The goal of this section is to demonstrate the the closed-loop system in \cref{eq:cl} obtains some desirable properties for the class of distributions considered in $\calP_d$. We consider the set of all distributions for the infinite sequence of disturbances $\bfw_{\infty}$ such that the disturbances are independent in time and their marginal distributions are in $\calP_d$, i.e., we consider the set
\begin{equation*}
    \calP_d^\infty := \prod_{k=0}^{\infty}\calP_d = \left\{\dsP\in\calM(\bbW^\infty) \ \middle| \ \begin{matrix}
        \ev_{\dsP}[w(k)w(k)'] \in \bbB_d \\
        w(k) \textrm{ are independent}
    \end{matrix}\right\} \,.
\end{equation*}

Note that we can also treat all $\dsP\in\calP_d^{\infty}$ as probability measures on the measurable space $(\bbW^{\infty},\mathcal{B}(\bbW^{\infty}))$ in which $\mathcal{B}(\bbW^{\infty})$ denotes the Borel $\sigma$-algebra of $\bbW^{\infty}$. 

An important property for the DRMPC algorithm is robust positive invariance, defined as follows.
\begin{definition}[Robust positive invariance]
A set $X\subseteq\bbR^n$ is robustly positively invariant (RPI) for the system in \cref{eq:cl} if $x^+\in X$ for all $x\in X$, $w\in\bbW$, and $d\in\calD$.  
\end{definition}
\noindent Note that this definition is adapted for DRMPC to consider all possible $d\in\calD$. If we choose $\calD=\{(0,0)\}$ (RMPC) then this definition reduces to the standard definition of RPI found in, e.g., \citep[Def 3.7]{rawlings:mayne:diehl:2020}. Since the control law $\kappa_d:\calX\rightarrow\bbU$ is defined on only the feasible set $\calX$, the first step in the closed-loop analysis is to establish that this feasible set is RPI. 
We define the expected average performance of the closed-loop system for $k\geq 1$, given the initial state $x\in\calX$, ambiguity parameters $d\in\calD$, and the distribution $\dsP\in\calP_d^{\infty}$, as follows. 
\begin{equation*}
    \calJ_k(x,d,\dsP) := \ev_{\dsP}\left[\frac{1}{k}\sum_{i=0}^{k-1}\ell\Big(\phi_d(i;x,\bfw_{\infty}),\kappa_d(\phi_d(i;x,\bfw_{\infty}))\Big)\right]
\end{equation*}

\subsection{Main results and key technical assumptions}

The subsequent analysis uses concepts from nominal, robust, min-max, and stochastic MPC, with a particular focus on results in \citep{goulart:kerrigan:maciejowski:2006,allan:bates:risbeck:rawlings:2017,mcallister:rawlings:2022}.To establish desirable properties for the closed-loop system, we consider the following assumption for the terminal cost $V_f(x)=x'Px$ and constraint $\bbX_f$. This assumption is also used in SMPC and RMPC analysis. 

\begin{assumption}[Terminal cost and constraint]\label{as:term}
    The matrix $P\succeq 0$ is chosen such that there exists $K_f\in\bbR^{m\times n}$ satisfying
    \begin{equation}\label{eq:term}
        P - Q - K_f'RK_f \succeq (A+BK_f)'P(A+BK_f) \,.
    \end{equation}
    Moreover, $\bbX_f$ contains the origin in its interior and $(x,K_fx)\in\bbZ$ and $(A+BK_f)x+Gw\in\bbX_f$ for all $x\in\bbX_f$ and $w\in\bbW$.  
\end{assumption}

Verifying \cref{as:term} is tantamount to finding a stabilizing linear control law $u=K_fx$, i.e., $A+BK_f$ is Schur stable, that satisfies the required constraints $(x,K_fx)\in\bbZ$ within some robustly positive invariant neighborhood of the origin $\bbX_f$. With this stabilizing linear control law, we can then construct an appropriate terminal cost matrix $P$ by, e.g., solving a discrete time Lyapunov equation. 

With this assumption, we can guarantee that the feasible set $\calX$ is RPI and establish the following distributionally robust long-term performance guarantee. This performance guarantee is a distributionally robust version of the stochastic performance guarantee typically derived for SMPC (e.g., \citep{lorenzen:dabbene:tempo:allgower:2016,hewing:wabersich:zeilinger:2020,cannon:kouvaritakis:wu:2009}).  

\begin{theorem}[DR long-term performance]\label{thm:performance}
    If \cref{as:convex,as:cost,as:term} hold, then $\calX$ is RPI for \cref{eq:cl} and 
    \begin{equation}\label{eq:performance}
        \limsup_{k\rightarrow\infty}\calJ_k(x,d,\dsP) \leq \max_{\Sigma\in\bbB_d}\tr(G'PG\Sigma),
    \end{equation}
    for all $x\in\calX$, $d\in\calD$, and $\dsP\in\calP^{\infty}_d$. 
\end{theorem}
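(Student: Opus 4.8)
The plan is to use the optimal value function $V_d^0$ as a Lyapunov-type function: I would establish a one-step dissipation inequality and then telescope it to obtain \cref{eq:performance}. The argument parallels the standard SMPC/RMPC performance proof but must be executed in the minimax setting, carefully tracking the worst-case covariances through the Gelbrich ball $\bbB_d$. First I would obtain RPI of $\calX$ and the dissipation estimate from a single candidate-policy construction. Fix $x\in\calX$ with an optimizer $\theta^*=(\bfM^*,\bfv^*)\in\theta^0_d(x)$, so $\kappa_d(x)=v^*(0)$; for a realized disturbance $w$ the successor $x^+=Ax+B\kappa_d(x)+Gw$ coincides with the second state $x(1)$ of the $\theta^*$-trajectory. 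I would build the shift-and-append candidate $\tilde\theta(w)\in\Pi(x^+)$: the first $N-1$ stages reuse the tail of $\theta^*$ (folding the now-known $w$ into the nominal terms $\tilde\bfv$, keeping the feedback gains as the shifted blocks of $\bfM^*$), and the terminal stage applies $u=K_fx$. By \cref{as:term}, $(x,K_fx)\in\bbZ$ and $(A+BK_f)x+Gw\in\bbX_f$ on $\bbX_f$, so $\tilde\theta(w)$ is feasible, giving $x^+\in\calX$ for every $w\in\bbW$ and $d\in\calD$, i.e.\ RPI.

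The core is the dissipation inequality
\[
\max_{\Sigma\in\bbB_d}\ev\!\left[V_d^0(x^+)\mid x\right]\;\le\;V_d^0(x)-\ell(x,\kappa_d(x))+\max_{\Sigma\in\bbB_d}\tr(G'PG\Sigma),
\]
the inner expectation over a zero-mean $w$ with covariance $\Sigma\in\bbB_d$. Suboptimality gives $V_d^0(x^+)\le V_d(x^+,\tilde\theta(w))$, and since $L(x^+,\tilde\theta,\cdot)$ is linear in $\bfSigma$ over the product set $\bbB^N_d$, the worst case separates blockwise; the fresh terminal disturbance enters only through $V_f$, so its block contributes exactly $\max_{\Sigma\in\bbB_d}\tr(G'PG\Sigma)$. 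More transparently, I would compare deterministic costs: along matched disturbances (the candidate's first $N-1$ disturbances equal $w(1),\dots,w(N-1)$, with $\tilde w$ the single new terminal disturbance) the candidate replicates stages $1,\dots,N$ of the $\theta^*$-trajectory, so
\[
J(x^+,\tilde\theta,\tilde\bfw)=J(x,\theta^*,\bfw)-\ell(x,\kappa_d(x))-V_f(x(N))+\ell(x(N),K_fx(N))+V_f\big((A+BK_f)x(N)+G\tilde w\big).
\]
Invoking \cref{eq:term} in the form $\ell(\xi,K_f\xi)+V_f((A+BK_f)\xi)\le V_f(\xi)$ cancels the terminal mismatch, and expanding $V_f((A+BK_f)x(N)+G\tilde w)$ leaves a term linear in $\tilde w$ plus $|G\tilde w|_P^2$. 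Because $x(N)$ is independent of the fresh $\tilde w$ and $\tilde w$ is zero mean, the linear term vanishes in expectation while $\ev[|G\tilde w|_P^2]=\tr(G'PG\,\Sigma_{\tilde w})\le\max_{\Sigma\in\bbB_d}\tr(G'PG\Sigma)$; the residual expectation of $J(x,\theta^*,\cdot)$ over disturbances each with covariance in $\bbB_d$ is at most $\max_{\bfSigma\in\bbB^N_d}L(x,\theta^*,\bfSigma)=V_d^0(x)$, yielding the display.

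Finally I would telescope. For any $\dsP\in\calP_d^\infty$ each $w(k)$ has covariance in $\bbB_d$, so with $c:=\max_{\Sigma\in\bbB_d}\tr(G'PG\Sigma)$ and the natural filtration $\mathcal{F}_k$,
\[
\ev_{\dsP}\!\left[V_d^0(\phi_d(k+1))\mid\mathcal{F}_k\right]\le V_d^0(\phi_d(k))-\ell\big(\phi_d(k),\kappa_d(\phi_d(k))\big)+c.
\]
Taking total expectations, summing over $k=0,\dots,K-1$, and using $V_d^0\ge0$ gives $\sum_{k=0}^{K-1}\ev_{\dsP}[\ell(\phi_d(k),\kappa_d(\phi_d(k)))]\le V_d^0(x)+Kc$; dividing by $K$ and letting $K\to\infty$ produces \cref{eq:performance}. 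RPI keeps every iterate in $\calX$ so the recursion is well posed, and measurability of $\theta^0_d(x)$ (from the appendix) makes the conditional expectations meaningful.

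The main obstacle is the dissipation inequality, specifically the covariance bookkeeping in the minimax setting: one must verify that the worst-case covariance of the shifted candidate problem, together with the covariance of the current disturbance, reproduces no more than $V_d^0(x)$, and that the single new terminal disturbance is the only block contributing the constant $\tr(G'PG\Sigma)$. Forcing the cross term to vanish hinges on the fresh terminal disturbance being independent of $x(N)$ and zero mean, which is precisely where \cref{as:convex}(i) is essential.
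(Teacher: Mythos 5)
Your proposal follows the same route as the paper's own proof: a shift-and-append candidate policy terminated by $u=K_fx$, the deterministic cost identity (your displayed equation for $J(x^+,\tilde\theta,\tilde{\bfw})$ is precisely the paper's \cref{eq:J_diff}), the terminal inequality from \cref{as:term}, and telescoping of the resulting dissipation inequality --- this is the paper's \cref{lem:costdec} followed by the proof of \cref{thm:performance}. The RPI argument and the telescoping step are correct as written.

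There is, however, a gap at the one step that is genuinely delicate in the minimax setting, and you flag it yourself as ``the main obstacle'' without actually resolving it: the claim that ``the residual expectation of $J(x,\theta^*,\cdot)$ over disturbances each with covariance in $\bbB_d$ is at most $\max_{\bfSigma\in\bbB_d^N}L(x,\theta^*,\bfSigma)=V_d^0(x)$.'' The quantity $V_d(x,\theta^*)$ majorizes expectations of $J(x,\theta^*,\cdot)$ only over distributions in $\calP_d^N$, i.e., with disturbances \emph{independent} in time. In your construction the middle disturbances $w(1),\dots,w(N-1)$ are drawn from the worst-case distribution of the \emph{shifted} problem, and a priori that worst case --- being the maximizer of $\bfSigma\mapsto L\big(x^+,\tilde\theta(w),\bfSigma\big)$, where both $x^+$ and $\tilde\theta(w)$ depend on $w$ --- could vary with the realized $w$. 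If it did, the induced joint law of $\big(w,w(1),\dots,w(N-1)\big)$ would not be a product measure, hence not an element of $\calP_d^N$, and the bound by $V_d^0(x)$ would not follow; equivalently, you would be bounding an expectation of a maximum by a maximum of an expectation, which is the wrong direction. The paper closes exactly this gap with the observation recorded in \cref{rem:independent}: the $\bfSigma$-dependent part of $L$ is $\tr\big((H_u\tilde{\bfM}^++H_w)'(H_u\tilde{\bfM}^++H_w)\bfSigma\big)$, which involves only $\tilde{\bfM}^+$, and $\tilde{\bfM}^+$ does not depend on $w$ (only $\tilde{\bfv}^+(w)$ does, affinely); hence the maximizer $\bfSigma^+$ in \cref{eq:Sigma_plus} is one fixed element of $\bbB_d^N$ for all realizations of $w$, a single product measure $\dsQ$ can be used for every $w$, and the resulting joint law lies in $\calP_d^N$. (Alternatively, your step can be repaired by noting that $J$ is quadratic in $\bfw$, that conditional zero-mean kills all cross-covariances, and that convexity of the Gelbrich ball keeps the averaged marginal covariances inside $\bbB_d$ --- but some such argument must be supplied; the assertion as written does not stand on its own.)
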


\Cref{as:term} is critical to \cref{thm:performance} to ensure that the terminal set is RPI with $u=K_fx$. \Cref{thm:performance} ensures that DRMPC must perform at least as well as this terminal control law in the long run, i.e., the right-hand side of \cref{eq:performance}. Note that SMPC provides a tighter bound than DRMPC if $\widehat{\Sigma}=\Sigma$, but this bound does not necessarily hold for SMPC if $\widehat{\Sigma}\neq\Sigma$. 

\Cref{thm:performance}, however, applies only to the average performance in the limit as $k\rightarrow\infty$. If we are also interested in the transient or stage-wise behavior of the closed-loop system at a given time $k\geq 0$, one can include the following assumption. 
\begin{assumption}[Positive definite stage cost]\label{as:track}
    The matrix $Q$ is positive definite, i.e., $Q\succ 0$. Moreover, the feasible set $\calX$ is bounded or $\bbX_f=\bbR^n$. 
\end{assumption}

By also including \cref{as:track}, we can establish a distributionally robust stage-wise performance guarantee. 

\begin{theorem}[DR stage-wise performance]\label{thm:resie}
     If \cref{as:convex,as:cost,as:term,as:track} hold, then there exist $\lambda\in(0,1)$ and $c,\gamma>0$ such that 
    \begin{equation}\label{eq:resie}
        \ev_{\dsP}\left[\ell(x(k),u(k))\right] \leq \lambda^kc|x|^2 + \gamma\left(\max_{\Sigma\in\bbB_d}\tr(G'PG\Sigma)\right), 
    \end{equation}
    in which $x(k)=\phi_d(k;x,\bfw_{\infty})$, $u(k)=\kappa_d(x(k))$ for all $k\geq 0$, $x\in\calX$, $d\in\calD$, and $\dsP\in\calP^{\infty}_d$.  
\end{theorem}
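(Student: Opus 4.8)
The plan is to exhibit the optimal value function $V_d^0$ as a (practical) ISS--Lyapunov function for the closed loop \cref{eq:cl} and to convert the resulting geometric contraction of $\ev_{\dsP}[V_d^0(\cdot)]$ into the stage-wise bound via the first-stage inequality $\ell(x,\kappa_d(x))\le V_d^0(x)$. Write $c_w:=\max_{\Sigma\in\bbB_d}\tr(G'PG\Sigma)$ for the disturbance term on the right of \cref{eq:resie}. The engine is the one-step descent inequality underlying \cref{thm:performance}, namely
\[
\ev_{\dsP}[V_d^0(x^+)\mid x]\le V_d^0(x)-\ell(x,\kappa_d(x))+c_w,\qquad x\in\calX,
\]
obtained by shifting the optimizer $\theta^*\in\theta^0_d(x)$, appending the terminal law $u=K_fx$ in the last stage, and telescoping the terminal condition \cref{eq:term}; the cross term $w'G'P(A+BK_f)x(N)$ vanishes in expectation because $w$ is zero mean, leaving a single $\tr(G'PG\Sigma)\le c_w$. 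I would begin by recording this inequality (re-deriving it if it is not isolated as a lemma), together with the measurability of $\kappa_d$ and continuity of $V_d^0$ from \cref{app:problem}, so the conditional expectation and the later tower-property iteration are well defined; independence of the disturbances in time (\cref{as:convex}(i)) is what lets me iterate.

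Two sandwich bounds on $V_d^0$ complete the Lyapunov picture. For the lower bound, since all stage costs and the terminal cost are nonnegative (\cref{as:cost}) and the first input satisfies $u(0)=v^0(0)=\kappa_d(x)$, the cost $J(x,\theta,\bfw)$ dominates its first-stage term for every $\bfw$, giving $V_d^0(x)\ge\ell(x,\kappa_d(x))\ge\underline c\,|x|^2$ with $\underline c:=\lambda_{\min}(Q)>0$ by \cref{as:track}. The crux is the matching upper bound $V_d^0(x)\le\bar a\,|x|^2+\gamma_1 c_w$ on $\calX$ for constants $\bar a,\gamma_1>0$; this is the step I expect to be the main obstacle, and it is exactly where the dichotomy in \cref{as:track} is used. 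For $x\in\bbX_f$ the constant-gain policy $u=K_fx$ (written in disturbance-feedback form) is feasible by \cref{as:term}, and telescoping \cref{eq:term} over the horizon bounds its worst-case expected cost by $|x|_P^2+Nc_w$, so $V_d^0(x)\le\lambda_{\max}(P)|x|^2+Nc_w$. If $\bbX_f=\bbR^n$ this already covers $\calX$. If instead $\calX$ is bounded, then $\calX$ is compact (\cref{lem:compact}) and $V_d^0$ is continuous, hence bounded by some $\bar V$; since $\bbX_f$ contains a ball $B_\delta$ about the origin, every $x\in\calX\setminus\bbX_f$ satisfies $|x|\ge\delta$, whence $V_d^0(x)\le\bar V\le(\bar V/\delta^2)|x|^2$, and taking $\bar a=\max\{\lambda_{\max}(P),\bar V/\delta^2\}$ yields the bound on all of $\calX$. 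Keeping the additive part proportional to $c_w$ is essential for consistency in the degenerate case $c_w=0$ (e.g. $\bbW=\{0\}$), where pure exponential decay is claimed.

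With both bounds in hand the argument closes as a routine contraction. Replacing $\ell(x,\kappa_d(x))\ge\underline c|x|^2$ in the descent inequality and substituting $|x|^2\ge(V_d^0(x)-\gamma_1 c_w)/\bar a$ gives
\[
\ev_{\dsP}[V_d^0(x^+)\mid x]\le\lambda\,V_d^0(x)+c_3,\qquad \lambda:=1-\tfrac{\underline c}{\bar a},
\]
with $c_3:=(1+\underline c\gamma_1/\bar a)c_w$ proportional to $c_w$; here $\lambda<1$ follows from $\bar a\ge\lambda_{\max}(P)\ge\lambda_{\min}(Q)=\underline c$ (using $P\succeq Q$, a consequence of \cref{eq:term}), and $\lambda>0$ after an arbitrarily small enlargement of $\bar a$. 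Iterating via the tower property and time-independence gives $\ev_{\dsP}[V_d^0(x(k))]\le\lambda^k V_d^0(x)+c_3/(1-\lambda)$, and inserting the upper bound on $V_d^0(x)$ together with the pointwise inequality $\ell(x(k),u(k))\le V_d^0(x(k))$ yields \cref{eq:resie} with $c:=\bar a$ and a single constant $\gamma c_w:=\gamma_1 c_w+c_3/(1-\lambda)$. The only genuinely delicate points are the rigorous upper bound in the bounded-$\calX$ case and the bookkeeping that keeps the additive constant a fixed multiple of $c_w$.
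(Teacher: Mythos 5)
Your proposal follows essentially the same route as the paper's proof: the one-step descent inequality (the paper's \cref{lem:costdec}), a quadratic-plus-constant upper bound on $V_d^0$ with the same case split between $\bbX_f$ and $\calX\setminus\bbX_f$ (the paper's \cref{lem:upperbound}), the lower bound $c_1|x|^2\le \ell(x,\kappa_d(x))\le V_d^0(x)$ from $Q\succ 0$, and then the geometric contraction of $\ev_{\dsP}[V_d^0(x(k))]$ via the tower property. The skeleton and the bookkeeping of constants are right, and your handling of $\lambda\in(0,1)$ (via $P\succeq Q$) is actually more explicit than the paper's. However, there is a genuine gap at exactly the step you flagged as the crux. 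In the bounded-$\calX$ case you justify the uniform bound $V_d^0\le\bar V$ by ``compactness of $\calX$ plus continuity of $V_d^0$.'' The paper does not establish continuity of $V_d^0$ in the generality of this theorem: \cref{prop:exist} gives only lower semicontinuity, and continuity (\cref{cor:contV}) requires $\bbZ,\bbX_f,\bbW$ polyhedral, which \cref{as:convex} does not impose. Lower semicontinuity on a compact set bounds a function below, not above, and even adding convexity does not rescue the claim: the closed convex perspective function $f(x,y)=x^2/y$, $f(0,0)=0$, is finite but unbounded above on the compact convex set $\mathrm{conv}\{(t,t^3):t\in[0,1]\}$. The correct route, which is what the paper's \cref{lem:upperbound} implicitly uses, is to bound the minimum by feasibility: $V_d^0(x)\le V_d(x,\theta)$ for any $\theta\in\Pi(x)$, where $V_d$ (not $V_d^0$) is continuous by \cref{prop:exist} and $\Pi(x)\subseteq\mathbb{M}\times\bbU^N$ uniformly over $x\in\calX$ (proof of \cref{lem:compact}); the supremum of a continuous function over this compact set then bounds $V_d^0$ on the bounded set $\calX$.

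A second, related gap is uniformity in $d$: the theorem asserts that $\lambda$, $c$, $\gamma$ can be chosen independently of $d\in\calD$, while your $\bar V$ (hence $\bar a$, $\lambda$, $c$, $\gamma$) is constructed for a fixed $d$ and implicitly depends on it through $\bbB_d$. The paper removes this dependence by taking $F(x)=\sup_{d\in\calD}\{V_d^0(x)-N\delta_d\}$ and invoking boundedness of $\calD$ (\cref{lem:calD}, a consequence of $\bbW$ bounded); equivalently, one can note that $d\in\calD$ forces every $\Sigma\in\bbB_d$ to be realizable by a distribution on the bounded set $\bbW$, so the bound from the previous paragraph is automatically uniform in $d$. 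Both repairs use only tools already available in the paper, so your argument is fixable; but as written, these two steps do not go through. (A minor further caveat: your parenthetical sketch of how to re-derive the descent inequality reads like the SMPC argument and glosses the DR-specific point, emphasized in \cref{rem:independent}, that the worst-case covariance of the shifted candidate is independent of $w(0)$; since you ultimately invoke the inequality underlying \cref{thm:performance}, which the paper isolates as \cref{lem:costdec}, this does not affect correctness.)
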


\Cref{thm:resie} ensures that the effect of the initial condition $x$ on the closed-loop stage cost vanishes exponentially fast as $k\rightarrow\infty$. The persistent term on the right-hand side of \cref{eq:resie} accounts for the continuing effect of the disturbance. We note, however, that this persistent term is a constant that depends on the design of the DRMPC algorithm and does not depend on the actual distribution $\dsP$. Since $Q\succ 0$, we can also establish a the following corollary of \cref{thm:resie}.

\begin{corollary}[DR, mean-squared ISS]\label{cor:mss}
     If \cref{as:convex,as:cost,as:term,as:track} hold, then there exist $\lambda\in(0,1)$ and $c,\gamma>0$ such that 
    \begin{equation}\label{eq:mss}
        \ev_{\dsP}\left[|\phi_d(k;x,\bfw_{\infty})|^2\right] \leq \lambda^kc|x|^2 + \gamma\left(\max_{\Sigma\in\bbB_d}\tr(G'PG\Sigma)\right),
    \end{equation}
    for all $k\geq 0$, $x\in\calX$, $d\in\calD$, and $\dsP\in\calP^{\infty}_d$.  
\end{corollary}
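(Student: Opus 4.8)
The plan is to obtain this corollary as an immediate consequence of \cref{thm:resie} by exploiting the positive definiteness of the state-cost matrix. The entire substantive content already resides in \cref{thm:resie}; what remains is to convert the bound on the expected stage cost into a bound on the expected squared norm of the state. Under \cref{as:track} we have $Q\succ 0$, so its smallest eigenvalue $\lambda_{\min}(Q)$ is strictly positive, and since $R\succeq 0$ by \cref{as:cost}, every stage cost satisfies the pointwise lower bound
\begin{equation*}
    \ell(x,u) = x'Qx + u'Ru \geq x'Qx \geq \lambda_{\min}(Q)\,|x|^2 \,.
\end{equation*}

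Next I would apply this inequality along the closed-loop trajectory. Setting $x(k)=\phi_d(k;x,\bfw_{\infty})$ and $u(k)=\kappa_d(x(k))$ as in the statement, taking expectations with respect to any $\dsP\in\calP_d^{\infty}$, and using monotonicity of the expectation, I obtain
\begin{equation*}
    \lambda_{\min}(Q)\,\ev_{\dsP}\left[|\phi_d(k;x,\bfw_{\infty})|^2\right] \leq \ev_{\dsP}\left[\ell(x(k),u(k))\right] \,.
\end{equation*}
Invoking the stage-wise bound \cref{eq:resie} from \cref{thm:resie} on the right-hand side and dividing through by $\lambda_{\min}(Q)>0$ yields \cref{eq:mss} with the same $\lambda\in(0,1)$ and with the constants rescaled as $c\mapsto c/\lambda_{\min}(Q)$ and $\gamma\mapsto\gamma/\lambda_{\min}(Q)$, which remain strictly positive. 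Since the bound in \cref{thm:resie} holds for all $k\geq 0$, $x\in\calX$, $d\in\calD$, and $\dsP\in\calP_d^{\infty}$, the resulting inequality inherits the same range of validity.

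There is no genuine obstacle here: the argument is a one-line coercivity estimate layered on top of \cref{thm:resie}, and the only point requiring care is to confirm that $\lambda_{\min}(Q)>0$ is legitimate, which is precisely the reason \cref{as:track} strengthens $Q\succeq 0$ to $Q\succ 0$. I would note in passing that the persistent term $\gamma\max_{\Sigma\in\bbB_d}\tr(G'PG\Sigma)$ is unaffected in character by the rescaling and continues to depend only on the DRMPC design parameters rather than on the true disturbance distribution $\dsP$, matching the interpretation given after \cref{thm:resie}.
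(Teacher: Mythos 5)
Your proposal is correct and matches the paper's own reasoning: the paper gives no separate proof of \cref{cor:mss}, stating only ``Since $Q\succ 0$\ldots'' before the corollary, which is precisely your coercivity argument $\ell(x,u)\geq x'Qx\geq \lambda_{\min}(Q)|x|^2$ applied to \cref{eq:resie} followed by rescaling the constants $c$ and $\gamma$ by $1/\lambda_{\min}(Q)$. Nothing is missing; the same $\lambda$, the full range of $k$, $x$, $d$, $\dsP$, and the design-only character of the persistent term all carry over exactly as you state.
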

The ISS-style bound in \cref{eq:mss} applies to the \textit{mean-squared} norm of the closed-loop state, a commonly referenced quantity in stochastic stability analysis. Note that \cref{eq:mss} also implies a similar bound for $\ev_{\dsP}\left[|\phi_d(k;x,\bfw_{\infty})|\right]$ via Jensen's inequality.\Cref{thm:performance,thm:resie} and \cref{cor:mss} subsume results for SMPC via $d=(0,\widehat{\Sigma})$. However, the extension of these results from SMPC to DRMPC is nontrivial due to the min-max optimization problem (see \cref{rem:independent}).

In MPC formulations, a common strategy is to choose the terminal cost matrix $P$ according to the DARE, i.e., the cost for the linear-quadratic regulator (LQR) of the unconstrained linear system.\footnote{This strategy minimizes $\tr(G'PG\Sigma)$. See \cref{app:optP}.} Specifically, we now consider the following stronger version of \cref{as:term}. 
 
\begin{assumption}[DARE terminal cost]\label{as:term_lqr}
    The matrices $R$ and $P$ are positive definite ($R,P\succ 0$) and we have 
    \begin{equation}\label{eq:dare}
        P = A'PA - A'PB(R+B'PB)^{-1}B'PA + Q,
    \end{equation}
    and $K_f:=-(R+B'PB)^{-1}B'PA$. Moreover, $(x,K_fx)\in\bbZ$ and $(A+BK_f)x+Gw\in\bbX_f$ for all $x\in\bbX_f$ and $w\in\bbW$. The terminal set $\bbX_f$ contains the origin in its interior. 
\end{assumption}

With this stronger assumption, we can establish significantly stronger properties for the DRMPC controller, similar to results for SMPC reported in \citep[Lemma 4.18]{goulart:2007}. In particular, we can establish that the closed-loop system is (pathwise) ISS. 

\begin{theorem}[Pathwise ISS]\label{thm:iss}
    If \cref{as:convex,as:cost,as:track,as:term_lqr} hold, then for any $d\in\calD$ the origin is (pathwise) ISS for \cref{eq:cl}, i.e., there exist $\lambda\in(0,1)$, $c>0$, and $\gamma(\cdot)\in\calK$ such that
    \begin{equation}\label{eq:iss}
        |\phi_d(k;x,\bfw_{\infty})| \leq \lambda^kc|x| + \gamma(||\bfw_{\infty}||),
    \end{equation}
    for all $k\geq 0$, $x\in\calX$, and $\bfw_{\infty}\in\bbW^{\infty}$. 
\end{theorem}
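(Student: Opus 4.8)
The plan is to build an ISS-Lyapunov function for \cref{eq:cl} and convert its decrease estimate into \cref{eq:iss} by a standard comparison argument. Writing the inner maximization in \cref{eq:V_d} as the $x$-independent, convex disturbance-feedback cost $g(\bfM):=\max_{\bfSigma\in\bbB_d^N}\tr\big((H_u\bfM+H_w)'(H_u\bfM+H_w)\bfSigma\big)\ge 0$, the value function reads $V_d^0(x)=\min_{(\bfM,\bfv)\in\Pi(x)}\big(|H_xx+H_u\bfv|^2+g(\bfM)\big)$. Because the persistent disturbance forces $V_d^0(0)=\min_{(\bfM,\bfv)\in\Pi(0)}\big(|H_u\bfv|^2+g(\bfM)\big)>0$ in general, $V_d^0$ cannot itself vanish at the origin; I would therefore take the shifted function $\tilde V_d(x):=V_d^0(x)-V_d^0(0)$ as the candidate Lyapunov function, which leaves its increments—hence any decrease estimate—unchanged.

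Next I would verify the quadratic sandwich $\lambda_{\min}(Q)\,|x|^2\le\tilde V_d(x)\le c_2|x|^2$. For the upper bound I would use \cref{as:track}: its two alternatives, $\calX$ bounded or $\bbX_f=\bbR^n$, are exactly the conditions under which $V_d^0$ grows at most quadratically on all of $\calX$. For the lower bound, let $(\bfM^0,\bfv^0)$ attain $V_d^0(x)$; a supporting lemma (feasibility is most abundant at the origin, since $\bbW$, $\bbZ$, $\bbX_f$ contain the origin in their interior, so the origin-centered tube generated by $\bfM^0$ remains admissible) gives $(\bfM^0,0)\in\Pi(0)$, hence $V_d^0(0)\le g(\bfM^0)$, whence $\tilde V_d(x)\ge|H_xx+H_u\bfv^0|^2\ge\ell(x,v^0(0))\ge\lambda_{\min}(Q)|x|^2$ using $Q\succ 0$ from \cref{as:track}. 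Robust positive invariance of $\calX$, required so the iteration remains in the domain of $\tilde V_d$, is already provided by \cref{thm:performance}.

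The core of the proof is the pathwise decrease $V_d^0(x^+)\le V_d^0(x)-\ell(x,\kappa_d(x))+\sigma_0(|w|)$ for all $w\in\bbW$. I would obtain it from the usual shifted candidate: drop the first element of the optimizer at $x$ and append the terminal law $u=K_fx$, which remains feasible for every realized $w$ because $\bbX_f$ is robustly invariant under \cref{as:term_lqr}. The DARE \cref{eq:dare} supplies the exact identity $|z|_P^2=\ell(z,K_fz)+|(A+BK_f)z|_P^2$, so the terminal stage telescopes without slack. The main obstacle is twofold and is exactly what separates this pathwise result from the mean-squared \cref{thm:resie,cor:mss}. First, the telescoping leaves the disturbance cross term $2\big((A+BK_f)x(N)\big)'PGw$, which is annihilated by $\ev_{\dsP}[\,\cdot\,]$ over the zero-mean $w$ in \cref{thm:resie} but must here be retained for a fixed realization; I would absorb it by Young's inequality $2a'PGw\le\eta|a|^2+\eta^{-1}|PGw|^2$, choosing $\eta$ so that $\eta|(A+BK_f)x(N)|^2$ is dominated by the stage cost $\ell(x(N),K_fx(N))\ge\lambda_{\min}(Q)|x(N)|^2$ released by the DARE identity, leaving a residual $\sigma_0(|w|)=O(|w|^2)$. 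Second, and more delicate, is the min--max matching (cf. \cref{rem:independent}): the candidate at $x^+$ must be compared against the optimum at $x$ \emph{at a common covariance}, and it is the exactness of the DARE identity together with the optimality of $K_f$ for $g$ that lets the worst-case covariances at successive steps be matched without leftover state-dependent terms—precisely the feature that \cref{as:term} alone does not provide, which is why only the weaker mean-squared estimate is available there.

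Finally I would close the argument. Combining the decrease with the sandwich bounds gives $\ell(x,\kappa_d(x))\ge\lambda_{\min}(Q)|x|^2\ge(\lambda_{\min}(Q)/c_2)\tilde V_d(x)$, hence $\tilde V_d(x^+)\le\lambda\tilde V_d(x)+\sigma_0(|w|)$ with $\lambda:=1-\lambda_{\min}(Q)/c_2\in(0,1)$. Iterating along the closed loop yields $\tilde V_d(\phi_d(k;x,\bfw_{\infty}))\le\lambda^k\tilde V_d(x)+\sigma_0(||\bfw_{\infty}||)/(1-\lambda)$; inserting the quadratic bounds, taking square roots, and using $\sqrt{a+b}\le\sqrt a+\sqrt b$ produces \cref{eq:iss} with rate $\sqrt\lambda$, constant $c=\sqrt{c_2/\lambda_{\min}(Q)}$, and a class-$\calK$ gain $\gamma(\cdot)$ assembled from $\sigma_0$. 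Since none of the constants $\lambda$, $c$, $\gamma$ depend on $d$, the same estimate holds for every $d\in\calD$, as claimed.
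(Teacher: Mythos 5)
Your overall template (shifted value function, per-step decrease, iteration) is the standard ISS recipe, but the two steps that carry all the difficulty are not established, and the sketches you give for them fail. First, your quadratic lower bound rests on the "supporting lemma" that $(\bfM^0,\bfv^0)\in\Pi(x)$ implies $(\bfM^0,0)\in\Pi(0)$. This is false in general: a feasible policy is not feasible after translation (the tube generated by $\bfu=\bfM^0\bfw$ from the origin is the \emph{difference} of two feasible trajectories, and differences of points in a convex set need not lie in it). The paper explicitly allows the origin to sit on the boundary of the constraint set (its small-scale example has $u_2\geq 0$), in which case $\bfM^0\bfw$ alone can violate the input constraint even though $\bfM^0\bfw+\bfv^0$ does not. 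The correct lower bound under \cref{as:term_lqr} comes instead from the DARE-based decomposition \cref{eq:EJ}, which yields $V_d^0(x)\geq |x|_P^2+\max_{\bfSigma\in\bbB_d^N}\tr(\bfP\bfSigma)$ (\cref{eq:Vopt_lb}); no feasibility-translation argument is needed or available.

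Second, and more fundamentally, the pathwise decrease $V_d^0(x^+)\leq V_d^0(x)-\ell(x,\kappa_d(x))+\sigma_0(|w|)$ cannot be obtained the way you sketch. The cross term you plan to absorb by Young's inequality, $2\big((A+BK_f)x(N)\big)'PGw(N)$, involves the \emph{predicted} terminal-stage disturbance $w(N)$, which is integrated against a zero-mean distribution inside $V_d$ even when the realized disturbance $w(0)$ is held fixed; it vanishes in expectation and needs no absorption (and, contrary to your sketch, the DARE identity consumes the entire terminal stage cost, so there is no "released" stage cost left to dominate a Young term). The terms that actually obstruct a pathwise bound are (i) the interaction of the realized $w(0)$ with the state-dependent part of the cost, $2(H_xx+H_u\bfv^0)'D_0\,w(0)$ with $D_0$ the first block-column of $H_u\bfM^0+H_w$, which prevents the conditional expected cost at fixed $w(0)$ from being bounded by $V_d^0(x)$; and (ii) the constant worst-case terminal term $\tr(G'PG\Sigma_N)\leq\delta_d:=\max_{\Sigma\in\bbB_d}\tr(G'PG\Sigma)$, which does \emph{not} vanish as $w\to 0$ — if it survives, you recover only the practical-stability bound of \cref{cor:mss}, not ISS. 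Cancelling this constant exactly is the crux of \cref{thm:iss}, and the paper does it by a different route: it compares the value at the \emph{nominal} successor $\widehat{x}^+=Ax+B\kappa_d(x)$ with the average candidate value over disturbed successors under the worst-case covariance distribution $\overline{\dsQ}$, proving $V_d^0(\widehat{x}^+)\leq\ev_{\overline{\dsQ}}\left[V_d(\widehat{x}^++Gw,\tilde{\theta}^+(w))\right]-\delta_d$ via \cref{eq:EJ} and \cref{eq:L_diff}, whence the clean nominal decrease $V_d^0(\widehat{x}^+)\leq V_d^0(x)-c_3|x|^2$ in \cref{eq:Vopt_costdec}; it then converts this disturbance-free decrease into pathwise ISS using the \emph{convexity} of $H(x)=V_d^0(x)-\max_{\bfSigma\in\bbB_d^N}\tr(\bfP\bfSigma)$ together with compactness of $\calX$ \citep[Prop. 4.13]{goulart:2007}. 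Your proposal never invokes convexity, and your phrase that the DARE structure "lets the worst-case covariances at successive steps be matched without leftover state-dependent terms" is precisely the content of this missing argument — asserted, not proved.
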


The property of (pathwise) ISS in \cref{thm:iss} is notably stronger than mean-squared ISS in \cref{cor:mss}. The key distinction is that the persistent term on the right-hand side of \cref{eq:iss} is specific to a given realization of the disturbances trajectory $\bfw_{\infty}$, while the persistent term in \cref{cor:mss} depends only on the DRMPC design. If $\bfw_{\infty}=\mathbf{0}$ then \cref{eq:iss} implies that the origin is exponentially stable. By contrast, the weaker restriction on the terminal cost in \cref{as:term} does \emph{not} ensure that the closed-loop system is ISS. We demonstrate this fact in \cref{s:examples} via a counterexample. 

We now consider a class of disturbances that are both independent \emph{and} identically distributed (i.i.d.) in time. We also require that arbitrarily small values of these disturbances occur with nonzero probability. Specifically, we define the following set of distributions. 
\begin{equation*}
    \calQ := \prod_{k=0}^{\infty}\left\{\dsP\in\calM(\bbW) \ \middle| \ \forall \ \varepsilon>0, \ \dsP(|w|\leq \varepsilon) >0\right\}
\end{equation*}
Note that $\calQ$ includes most distributions of interest such as uniform, truncated Gaussian, and even finite distributions with $\dsP(w=0)>0$. For this class of disturbances, we have the following exact long-term performance guarantee.

\begin{theorem}[Exact long-term performance]\label{thm:performance_opt}
    If \cref{as:convex,as:cost,as:track,as:term_lqr} hold, then
    \begin{equation}\label{eq:performance_optK}
        \lim_{k\rightarrow\infty}\calJ_k(x,d,\dsP) = \tr(G'PG\Sigma) \textrm{ with } \Sigma=\ev_{\dsP}\left[w(i)w(i)'\right],
    \end{equation}
    for all $x\in\calX$, $d\in\calD$, and $\dsP\in\calQ$.
\end{theorem}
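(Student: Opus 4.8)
The plan is to show that, with the DARE terminal cost, the DRMPC law collapses \emph{exactly} onto the unconstrained LQR law $K_f$ on the terminal set $\bbX_f$, that the closed loop is almost surely absorbed into $\bbX_f$, and that on $\bbX_f$ an exact telescoping identity pins down the long-term average. I would first record the algebraic consequence of \cref{eq:dare}: with $A_K:=A+BK_f$ one has $Q+K_f'RK_f=P-A_K'PA_K$, so that along the LQR recursion $x^+=A_Kx+Gw$ the stage cost obeys $\ell(x,K_fx)=|x|_P^2-|A_Kx|_P^2$. This identity is the backbone of the final averaging.

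The decisive structural point is that $L(x,\theta,\bfSigma)$ in \cref{eq:L_obj} \emph{separates}: $\bfv$ appears only in the nominal term $|H_xx+H_u\bfv|^2$ and $\bfM$ only in the trace term. Minimizing $L$ over causal $\theta\in\Theta$ \emph{without} the constraints of $\Pi(x)$ is a stochastic LQ problem with DARE terminal cost, whose minimizer is the LQR policy $\theta^{\mathrm{LQR}}=(\bfM^{\mathrm{LQR}},\bfv^{\mathrm{LQR}})$ (the affine feedback $u(i)=K_fx(i)$, optimal among all causal policies); by certainty equivalence $\bfM^{\mathrm{LQR}}$ does not depend on the covariance, so $\theta^{\mathrm{LQR}}$ minimizes $V_d(x,\theta)=\max_{\bfSigma\in\bbB_d^N}L(x,\theta,\bfSigma)$ for \emph{every} $d\in\calD$. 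For $x\in\bbX_f$, \cref{as:term_lqr} makes the LQR trajectory constraint-admissible and keeps it in $\bbX_f$, i.e.\ $\theta^{\mathrm{LQR}}\in\Pi(x)$; a feasible global unconstrained minimizer is a constrained minimizer, and since $R\succ0$ renders the nominal term strictly convex in $\bfv$ (hence $\bfv$, and thus $v(0)$, unique) we conclude $\kappa_d(x)=K_fx$ on $\bbX_f$, \emph{independently of} $d$. In particular $\bbX_f$ is absorbing for \cref{eq:cl}: once $\phi_d(k)\in\bbX_f$ the closed loop is the LQR recursion $x^+=A_Kx+Gw$ and never leaves $\bbX_f$.

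I would then show the entry time $\tau:=\inf\{k\ge0:\phi_d(k;x,\bfw_\infty)\in\bbX_f\}$ is finite $\dsP$-almost surely with $\dsP(\tau>k)\to0$ (when $\bbX_f=\bbR^n$ this is vacuous, since \cref{as:term_lqr} then makes DRMPC$=$LQR everywhere and $\tau=0$; the substantive case is $\calX$ compact). Pathwise ISS (\cref{thm:iss}) together with boundedness of $\bbW$ yields a uniform ultimate bound $|\phi_d(k)|\le R$ for $k\ge k_0(x)$. Choosing $r>0$ with $\{|x|\le r\}\subseteq\bbX_f$ (possible as $\bbX_f$ contains the origin in its interior) and then $T,\delta$ with $\lambda^TcR+\gamma(\delta)\le r$, the causal, time-shifted form of \cref{eq:iss} shows that any window of $T$ consecutive disturbances of norm at most $\delta$ steers the state from $\{|x|\le R\}$ into $\bbX_f$. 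Since $\dsP(|w|\le\delta)>0$ for $\dsP\in\calQ$ and the disturbances are i.i.d., such a window occupies each disjoint length-$T$ block with a probability bounded below, and independence across blocks gives a geometric tail for $\tau$.

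Finally, I would split $\ev_\dsP[\ell(x(k),u(k))]=\ev_\dsP[\ell_k;\tau\le k]+\ev_\dsP[\ell_k;\tau>k]$. The stage costs are uniformly bounded (ISS), so the second term is $O(\dsP(\tau>k))$ and vanishes in the Ces\`aro mean. On $\{\tau\le k\}$ the trajectory follows the LQR recursion, so combining the identity recorded above with the strong Markov property at $\tau$ — using that $w(k)$ is independent of $x(k)$ with zero mean, whence the cross terms drop and $\ev_\dsP[|Gw(k)|_P^2]=\tr(G'PG\Sigma)$ — the telescoped sum from $\tau$ to $K-1$ gives
\[
\tfrac1K\!\sum_{k=0}^{K-1}\ev_\dsP[\ell_k;\tau\le k]=\tfrac1K\,\ev_\dsP\!\big[(|x(\tau)|_P^2-|x(K)|_P^2)\mathds{1}(\tau<K)\big]+\tr(G'PG\Sigma)\,\tfrac1K\,\ev_\dsP\!\big[(K-\tau)\mathds{1}(\tau<K)\big].
\]
Boundedness of $\ev_\dsP[|x(\tau)|_P^2]$ and of $\ev_\dsP[|x(K)|_P^2]$ (\cref{cor:mss}) drives the boundary term to $0$, while $\tfrac1K\,\ev_\dsP[(K-\tau)\mathds{1}(\tau<K)]\to1$ since $\tau<\infty$ a.s.; together these yield $\lim_{k}\calJ_k(x,d,\dsP)=\tr(G'PG\Sigma)$. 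I expect the absorption step to be the main obstacle: pathwise ISS alone only bounds the state within a ball that may be larger than $\bbX_f$, so reaching $\bbX_f$ must be wrung out of the positive-probability small-disturbance events in $\calQ$ via the shifted ISS estimate and an independence argument; the secondary difficulty is coupling the random absorption time $\tau$ with the telescoping and checking that the pre-absorption and boundary contributions are asymptotically negligible.
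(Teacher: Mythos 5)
Your proof is correct, and its skeleton coincides with the paper's: (i) the DRMPC law equals $K_f$ on $\bbX_f$ and $\bbX_f$ is robustly invariant (your separability/certainty-equivalence argument is, once made precise, exactly the completion-of-squares identity used to prove \cref{lem:K_f}); (ii) the closed loop is absorbed into $\bbX_f$ almost surely with quantitative control; (iii) the DARE identity is telescoped and the pre-absorption contribution is shown to be Ces\`aro-negligible. The differences lie in how (ii) and (iii) are executed. For (ii), the paper simply invokes \citep[Thm. 5]{munoz:cannon:2020} to obtain the summability $\sum_k \dsP\big(\phi_d(k;x,\bfw_\infty)\notin\bbX_f\big)<\infty$ (\cref{lem:bc}), whereas you reconstruct the absorption from first principles: pathwise ISS (\cref{thm:iss}) gives a uniform ultimate bound, a length-$T$ window of disturbances of size at most $\delta$ (an event of probability bounded below because $\dsP\in\calQ$ and the noise is i.i.d.) steers the state into $\bbX_f$, and independence across disjoint blocks yields a geometric tail for the entry time $\tau$ --- which is stronger than, and implies, the paper's summability since $\{\phi_d(k)\notin\bbX_f\}\subseteq\{\tau>k\}$ by invariance. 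For (iii), the paper avoids stopping times entirely: it defines $\zeta(i):=|x(i+1)|_P^2-|x(i)|_P^2+\ell(x(i),u(i))-|Gw(i)|_P^2$, notes that $\ev_{\dsP}[\zeta(i)]=0$ whenever $x(i)\in\bbX_f$ while $|\zeta(i)|\leq\eta$ always, telescopes over the whole horizon, and bounds $\frac{1}{k}\sum_i\ev_{\dsP}[\zeta(i)]$ by $\eta p/k$ via \cref{lem:bc}; you instead split at $\tau$, telescope from $\tau$, and use independence of $w(k)$ from $\{\tau\le k\}$ and $x(k)$ to kill the cross terms --- an equivalent computation (indeed $\sum_{k<K}\dsP(\tau\le k)=\ev_{\dsP}[(K-\tau)\mathds{1}(\tau<K)]$). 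What each buys: your route is self-contained and delivers a geometric absorption rate; the paper's route is shorter because the $\zeta$-trick sidesteps the stopping-time bookkeeping and the cited lemma does the probabilistic work. Two points to tighten in your write-up: the ``causal, time-shifted form'' of \cref{eq:iss} needs the explicit causality argument (replace disturbances after the window by zero, which cannot change $\phi_d(k+T)$), since \cref{eq:iss} as stated involves $\|\bfw_\infty\|$ over the entire sequence; and the uniform bound on stage costs in the pre-absorption term comes from boundedness of $\calX$ and $\bbU$ (\cref{as:convex,as:track}), not from ISS per se --- though, as you note, in the alternative case $\bbX_f=\bbR^n$ that term is vacuous because $\tau=0$.
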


Note that \cref{eq:performance_optK} provides an exact value for the long-term performance based on the distribution of the disturbance in the closed-loop system. By contrast, \cref{eq:performance} provides a conservative and constant bound based on the design parameter $d\in\calD$. Furthermore, the values of $d\in\calD$ do \emph{not} affect the long-term performance in \cref{eq:performance_optK}. By recalling that SMPC and RMPC are special cases of DRMPC, we have the following corollary of \cref{thm:performance_opt}.

\begin{corollary}[DRMPC versus SMPC]\label{cor:equal}
    If \cref{as:convex,as:cost,as:track,as:term_lqr} hold, then the long-term performance of DRMPC, SMPC ($\varepsilon=0$), and RMPC ($\varepsilon=0$, $\widehat{\Sigma}=0$) are equivalent, i.e., 
    \begin{equation*}
        \lim_{k\rightarrow\infty}\calJ_k\Big(x,(\varepsilon,\widehat{\Sigma}),\dsP\Big) = \lim_{k\rightarrow\infty}\calJ_k\Big(x,(0,\widehat{\Sigma}),\dsP\Big) = \lim_{k\rightarrow\infty}\calJ_k\Big(x,(0,0),\dsP\Big),
    \end{equation*}
    for all $x\in\calX$, $(\varepsilon,\widehat{\Sigma})\in\calD$, and $\dsP\in\calQ$.
\end{corollary}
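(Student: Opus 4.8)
The plan is to obtain the corollary as an immediate consequence of \cref{thm:performance_opt}. The crucial observation is that the exact long-term performance value $\tr(G'PG\Sigma)$ appearing in \cref{eq:performance_optK} depends only on the DARE matrix $P$ (fixed by \cref{as:term_lqr}) and on the covariance $\Sigma=\ev_{\dsP}[w(i)w(i)']$ of the \emph{actual} closed-loop disturbance $\dsP\in\calQ$; it is manifestly \emph{independent} of the ambiguity parameter $d=(\varepsilon,\widehat{\Sigma})$. Hence, once each of the three parameter choices is shown to lie in $\calD$, applying \cref{thm:performance_opt} three times yields three limits that all equal this same quantity.

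First I would confirm that the RMPC parameter $(0,0)$ always lies in $\calD$: with $\varepsilon=0$ and $\widehat{\Sigma}=0$ the Gelbrich ball $\bbB_{(0,0)}$ is the singleton $\{0\}$, and the Dirac measure at the origin is a member of $\calM(\bbW)$ with covariance $0$, since the origin belongs to $\bbW$ by \cref{as:convex}(ii). Next I would verify that the SMPC parameter $(0,\widehat{\Sigma})$ lies in $\calD$ whenever $(\varepsilon,\widehat{\Sigma})\in\calD$: because $\bbB_{(0,\widehat{\Sigma})}=\{\widehat{\Sigma}\}\subseteq\bbB_{(\varepsilon,\widehat{\Sigma})}$, the compatibility requirement defining $\calD$ for $(\varepsilon,\widehat{\Sigma})$ guarantees in particular that $\widehat{\Sigma}$ is realized by some $\dsP\in\calM(\bbW)$, which is exactly the compatibility requirement for $(0,\widehat{\Sigma})$.

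With all three parameters shown to be in $\calD$, I would invoke \cref{thm:performance_opt} for each one. For a common initial state $x\in\calX$ and a common disturbance law $\dsP\in\calQ$, the three control laws $\kappa_{(\varepsilon,\widehat{\Sigma})}$, $\kappa_{(0,\widehat{\Sigma})}$, and $\kappa_{(0,0)}$ generate three \emph{distinct} closed-loop trajectories, yet by \cref{thm:performance_opt} their time-averaged expected stage costs all converge to $\tr(G'PG\Sigma)$ with the \emph{same} covariance $\Sigma=\ev_{\dsP}[w(i)w(i)']$. Chaining these three equalities through their shared value delivers the stated triple equality.

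I do not expect a genuine obstacle: essentially all of the content is carried by \cref{thm:performance_opt}, whose proof does the heavy lifting. The only points requiring any care are the elementary membership checks in $\calD$ and the conceptual subtlety that the equality holds \emph{despite} the three controllers producing different closed-loop trajectories---it is only the limiting time averages, pinned down by the disturbance covariance and the DARE cost, that coincide.
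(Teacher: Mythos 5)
Your proposal is correct and follows essentially the same route as the paper: the paper treats \cref{cor:equal} as an immediate consequence of \cref{thm:performance_opt}, noting that SMPC and RMPC are the special cases $d=(0,\widehat{\Sigma})$ and $d=(0,0)$ of DRMPC and that the limiting value $\tr(G'PG\Sigma)$ in \cref{eq:performance_optK} does not depend on $d$. Your additional verification that $(0,\widehat{\Sigma})$ and $(0,0)$ indeed belong to $\calD$ is a worthwhile rigor check that the paper leaves implicit (it simply declares these special cases members of $\calD$ in \cref{s:problem}), but it does not change the substance of the argument.
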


\Cref{cor:equal} provides notable insight into the potential benefits of DRMPC relative to SMPC if SMPC has an incorrect distribution. Specifically, the additional requirements of \cref{as:term_lqr} ensure that DRMPC does \emph{not} provide any long-term performance benefit relative to SMPC (or RMPC) regardless of the distrubance's true distribution $\dsP$. Thus, DRMPC can provide a long-term performance benefit only in the case that \cref{as:term_lqr} does \emph{not} hold and the covariance $\widehat{\Sigma}$ used in SMPC is sufficiently inaccurate. 

Although selecting $P$ to satisfy \cref{eq:dare} is a standard design method in MPC, there are also systems in which one cannot satisfy the requirements of \cref{as:term_lqr} for a given $Q,R\succ 0$. In particular, if the origin is sufficiently close to (or on) the boundary of $\bbZ$, then satisfying all of the requirements in \cref{as:term_lqr} is typically not possible. In chemical process control, for example, processes often operate near input constraints (e.g., maximum flow rates) to ensure high throughput for the process. Thus, the terminal cost and constraint are chosen to satisfy only the weaker condition in \cref{as:term}. In this case, there is a possibility that DRMPC produces superior long-term performance relative to SMPC if the covariance $\widehat{\Sigma}$ used in SMPC is sufficiently different from the true covariance of the disturbance, i.e., $\widehat{\Sigma}\neq\Sigma$. We therefore focus on examples in \cref{s:examples} that satisfy \cref{as:term}, but cannot satisfy \cref{as:term_lqr}. 

\begin{remark}[Detectable stage cost]\label{rem:detectable}
    We can also weaken \cref{as:track} to $Q\succeq 0$ and $(A,Q^{1/2})$ detectable. By defining an input-output-to-state stability (IOSS) Lyapunov function we can apply the same approach used for nominal MPC in, e.g., \citep[Thm. 2.24]{rawlings:mayne:diehl:2020}, to establish \cref{thm:resie,cor:mss,thm:iss,thm:performance_opt,cor:equal} for DRMPC under this weaker restriction for $Q$.
\end{remark}

\section{Closed-loop guarantees: Technical proofs}\label{s:cl_proofs} 

\subsection{Distributionally robust long-term performance}\label{ss:standard}

To establish \cref{thm:performance}, we begin by establishing that feasible set $\calX$ is RPI and providing a distributionally robust expected cost decrease condition. 

\begin{lemma}[DR cost decrease]\label{lem:costdec}
    If \cref{as:convex,as:cost,as:term} hold, then the feasible set $\calX$ is RPI for \cref{eq:cl} and 
    \begin{equation}\label{eq:cost_dec}
        \ev_{\dsP}\left[V^0_d(x^+)\right] \leq V^0_d(x) - \ell(x,\kappa_d(x)) + \max_{\Sigma\in\bbB_d}\tr(G'PG\Sigma)
    \end{equation}
    for all $\dsP\in\calP_d$, $d\in\calD$, and $x\in\calX$. 
\end{lemma}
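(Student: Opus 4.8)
The plan is to run the standard MPC value-function decrease argument built on a shifted candidate policy, but to track the min--max structure carefully so that the worst-case covariance and the expectation over the realized disturbance decouple. Fix $x\in\calX$, let $\theta^*=(\bfM^0,\bfv^0)\in\theta^0_d(x)$ be an optimizer of \cref{eq:dro}, and write $\kappa_d(x)=v^0(0)$; for a realized disturbance $w\in\bbW$ the successor is $x^+=Ax+B\kappa_d(x)+Gw$. I would construct the usual time-shifted candidate $\tilde\theta(w)=(\tilde\bfM,\tilde\bfv)\in\Theta$ for the horizon starting at $x^+$: for the first $N-1$ stages reuse the shifted blocks $\tilde M(i,j)=M^0(i+1,j+1)$ with feedforward $\tilde v(i)=v^0(i+1)+M^0(i+1,0)w$ (folding the now-known $w$ into the affine term), and for the last stage append the terminal law $u=K_f x$. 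First I would verify $\tilde\theta(w)\in\Pi(x^+)$: stages $0,\dots,N-2$ inherit the constraint satisfaction of $\theta^*$ under the shift, while the appended stage and the new terminal state are feasible precisely because \cref{as:term} guarantees $(\xi,K_f\xi)\in\bbZ$ and $(A+BK_f)\xi+Gw'\in\bbX_f$ for all $\xi\in\bbX_f$, $w'\in\bbW$. Since this holds for every $w\in\bbW$ and every $d\in\calD$, it shows $x^+\in\calX$ and hence that $\calX$ is RPI.

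The suboptimality of the candidate gives $V^0_d(x^+)\le V_d(x^+,\tilde\theta(w))$. The key structural observation I would exploit next is that $\tilde\bfM$ does \emph{not} depend on the realized $w$ --- only the affine part $\tilde\bfv$ (and $x^+$) does. Consequently, in the decomposition of \cref{eq:L_obj} the covariance term $\tr((H_u\tilde\bfM+H_w)'(H_u\tilde\bfM+H_w)\bfSigma^+)$ is independent of $w$, so the inner maximization over $\bfSigma^+\in\bbB_d^N$ and the outer expectation over $w\sim\dsP$ commute.

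With this decoupling in hand, I would pass to the path-cost representation $\Phi$ and establish a pointwise (in all disturbances) bound by telescoping the shifted stage costs and invoking the terminal inequality \cref{eq:term}. Writing $\xi$ for the predicted terminal state $x(N)$ of $\theta^*$ and $\omega$ for the fresh terminal disturbance $w^+(N-1)$, the appended stage contributes $\ell(\xi,K_f\xi)+V_f((A+BK_f)\xi+G\omega)$, and expanding the quadratic and applying \cref{eq:term} yields $\ell(\xi,K_f\xi)+V_f((A+BK_f)\xi+G\omega)\le V_f(\xi)+2\omega'G'P(A+BK_f)\xi+\omega'G'PG\omega$. The cost then telescopes to
\begin{equation*}
\Phi(x^+,\tilde\bfu,\bfw^+)\le \Phi(x,\bfu^0,\bfw)\big|_{\mathrm{aligned}}-\ell(x,\kappa_d(x))+2\omega'G'P(A+BK_f)\xi+\omega'G'PG\omega,
\end{equation*}
where the disturbances are aligned via $w(0)=w$, $w(k)=w^+(k-1)$. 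Taking expectations, the cross term vanishes since $\omega$ is zero-mean and independent of $\xi$, and $\ev[\omega'G'PG\omega]=\tr(G'PG\Sigma^+_{N-1})$. Because the block covariances are independent, $\max_{\bfSigma^+\in\bbB_d^N}$ separates across blocks, isolating $\max_{\Sigma\in\bbB_d}\tr(G'PG\Sigma)$ from the last block while the first $N-1$ blocks reassemble into an $L(x,\theta^*,\cdot)$ term. Finally, taking the outer expectation over $w\sim\dsP$ with $\dsP\in\calP_d$ makes the cross term in $w$ vanish and replaces $ww'$ by $\Sigma:=\ev_\dsP[ww']\in\bbB_d$; the assembled covariance $\mathrm{diag}(\Sigma,\Sigma^{+*}_0,\dots,\Sigma^{+*}_{N-2})$ then lies in $\bbB_d^N$, so the reassembled term is bounded by $\max_{\bfSigma\in\bbB_d^N}L(x,\theta^*,\bfSigma)=V^0_d(x)$, which is exactly \cref{eq:cost_dec}.

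The main obstacle is the bookkeeping of the min--max structure in this last step: one must argue that (i) the candidate feedback is disturbance-independent so expectation and worst-case covariance commute, (ii) the worst-case over the block-diagonal set $\bbB_d^N$ separates across time, and (iii) the true per-step second moment $\Sigma=\ev_\dsP[ww']$ of the realized disturbance belongs to $\bbB_d$, which is what allows the first $N-1$ reassembled blocks to be dominated by $V^0_d(x)$ rather than left as a distribution-dependent residue. Verifying feasibility of the shifted candidate and the sign of the telescoped terminal terms is routine given \cref{as:term}; the only genuinely delicate point is keeping the worst-case covariances separated block-by-block so that exactly one copy of $\max_{\Sigma\in\bbB_d}\tr(G'PG\Sigma)$ survives on the right-hand side.
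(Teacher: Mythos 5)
Your proposal is correct and follows essentially the same route as the paper's proof: the same shifted candidate policy with the terminal law $K_f$ appended, the same feasibility/RPI argument via \cref{as:term}, the same key observation that the candidate's feedback matrix $\tilde{\bfM}$ is independent of the realized disturbance so that the worst-case covariance decouples from the expectation (the paper highlights exactly this in \cref{rem:independent}), and the same final step of assembling the realized first-stage covariance $\Sigma=\ev_{\dsP}[ww']\in\bbB_d$ with the worst-case blocks into an element of $\bbB_d^N$ so that the reassembled term is dominated by $V_d^0(x)$. The only differences are presentational: the paper packages the block-wise bookkeeping by constructing an explicit distribution $\dsQ$ attaining the worst-case covariance and working with conditional expectations, whereas you separate the maximization over $\bbB_d^N$ block by block, but the underlying argument is identical.
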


\begin{proof}
    Choose $x(0)\in\calX$ and $d\in\calD$. Define $(\bfM^0,\bfv^0)=\theta^0\in\theta^0_d(x(0))$. 
    Consider the subsequent state $x(1)=Ax(0)+Bv^0(0)+Gw(0)$ for some $w(0)\in\bbW$. For the state $x(1)$, we choose a candidate solution 
    \begin{equation}\label{eq:candidate}
        \tilde{\theta}^+(w(0)) = \Big(\tilde{\bfM}^+,\tilde{\bfv}^+(w(0))\Big)
    \end{equation}
    such that the open-loop input trajectory remains the same as the previous optimal solution, i.e., $u(1)=v^0(1) + M^0(1,0)w(0)=\tilde{v}^+(1)$ and
    \begin{align}
        u(k) & = v^0(k) + \sum_{j=0}^{k-1}M^0(k,j)w(j) \nonumber \\
        & = \tilde{v}^+(k) + \sum_{j=1}^{k-1}\tilde{M}^+(k,j)w(j), \label{eq:u_cand}
    \end{align}
    for all $k\in\{2,\dots,N-1\}$ and $\bfw\in\bbW^N$. With this choice of parameters, the open-loop state trajectories $x(k)$ are also the same for all $k\in\{1,\dots,N-1\}$ and $\bfw\in\bbW^N$. 
    The candidate solution is therefore
    \begin{equation*}
        \begin{split}
            & \tilde{\bfM}^+ =
            \begingroup 
            \setlength\arraycolsep{1pt}
            \begin{bmatrix}
        	0 & \cdots & \cdots & 0 & 0 \\
        	M^0(2,1) & 0 & \cdots & 0 & 0\\
        	\vdots & \ddots & \ddots & \vdots & \vdots \\
        	M^0(N-1,1) & \cdots & M^0(N-1,N-2) & 0 & 0 \\
            \tilde{M}^+(N,1) & \cdots & \tilde{M}^+(N,N-2) & \tilde{M}^+(N,N-1) & 0
    	\end{bmatrix}
        \endgroup
        \end{split}
    \end{equation*}
    \begin{equation*}
            \tilde{\bfv}^+(w(0)) = \begin{bmatrix}
            v^0(1) + M^0(1,0)w(0) \\ v^0(2) + M^0(2,0)w(0) \\ \vdots \\ v^0(N-1) + M^0(N-1,0)w(0) \\ \tilde{v}^+(N)
        \end{bmatrix}
    \end{equation*}
    in which that last rows of $\tilde{\bfM}^+$ and $\tilde{\bfv}^+(w(0))$ are not yet defined. We define these last rows by the terminal control law $u(N)=K_fx(N)$. Specifically, we have
    \begin{equation}\label{eq:warmstart_N}
        K_fx(N) = \tilde{v}^+(N) + \sum_{i=1}^{N-1} \tilde{M}^+(N,i)w(i) \,.
    \end{equation}
    By definition of $x(N)$, we have
    \begin{equation*}
        x(N) = A^{N-1}x(1) + \sum_{i=1}^{N-1}A^{N-1-i}\Big(Bu(i) + Gw(i)\Big) \,.
    \end{equation*}
   We then substitute the values of $u(i)$ for the candidate solution in \cref{eq:u_cand} to give
    \begin{multline*}
        x(N) = A^{N-1}x(1) + \sum_{i=1}^{N-1}A^{N-1-i}B\Big(v^0(i) + M^0(i,0)w(0)\Big) \\ + \sum_{i=1}^{N-1}A^{N-1-i}\left(\sum_{j=1}^{i-1}BM^0(i,j)w(j) + G w(i)\right) \,.
    \end{multline*}
    With some manipulation, we can therefore define
    \begin{align*}
        \tilde{v}^+(N) & = K_fA^{N-1}x(1)
        + \sum_{i=1}^{N-1}K_fA^{N-1-i}B\Big(v(i) + M^0(i,0)w(0)\Big) \\
        \tilde{M}^+(N,i) & = K_f\bigg(\sum_{j=i+1}^{N-1}A^{N-1-j}BM^0(j,i) + A^{N-1-i}G\bigg),
    \end{align*}
    to satisfy \cref{eq:warmstart_N}. Note that $\tilde{\bfM}^+$ is independent of $w(0)$ and $\tilde{\bfv}^+(w(0))$ is an affine function of $w(0)$. 
    
    We first establish that this candidate solution is feasible for any $w(0)\in\bbW$ and that $\calX$ is RPI. Since $(\bfM^0,\bfv^0)\in\Pi(x(0))$, then $(x(k),u(k))\in\bbZ$ for all $k\in\{1,\dots,N-1\}$ and $x(N)\in\bbX_f$ for all $\bfw\in\bbW^N$. From \cref{as:term}, we also have that $(x(N),K_fx(N))\in\bbZ$ for all $\bfw\in\bbW^N$. Therefore, $(x(N),u(N))\in\bbZ$ and $x(N+1)=(A+BK_f)x(N) + Gw(N)\in\bbX_f$ for all $w(N)\in\bbW$ by \cref{as:term}. Thus, $(\tilde{\bfM}^+,\tilde{\bfv}^+)\in\Pi(x(1))$. Since $\Pi(x(1))\neq\emptyset$, we also know that $x(1)\in\calX$ for any $w(0)\in\bbW$. Since the choice of $x(0)\in\calX$ and $d\in\calD$ was arbitrary, we have that $\calX$ is RPI. 
    
    Choose $\bfw=(w(0),\dots,w(N-1))\in\bbW^N$ and define $\bfw^+=(w(1),\dots,w(N))$ with some additional $w(N)\in\bbW$. We have that
    \begin{multline}\label{eq:J_diff}
        J(x(1),\tilde{\theta}^+(w(0)),\bfw^+) - J(x(0),\theta^0,\bfw) = -\ell(x(0),v^0(0)) \\ + V_f(x(N+1)) - V_f(x(N)) + \ell(x(N),K_fx(N)) \,.
    \end{multline}
    We define
    \begin{align}\label{eq:Sigma_plus}
        \bfSigma^+ & = \arg\max_{\bfSigma\in\bbB_d^N} L(x(1),\tilde{\theta}^+(w(0)),\bfSigma) \\ & = \arg\max_{\bfSigma\in\bbB_d^N} \tr\left((H_u\tilde{\bfM}^+ + H_w)'(H_u\tilde{\bfM}^+ + H_w)\bfSigma\right),\nonumber
    \end{align}
    in which equality holds because $|H_xx+H_u\bfv|^2$ is a constant with respect to $\bfSigma$ and therefore does not affect the worst-case value of $\bfSigma$. Note that $\bfSigma^+$ is independent of $w(0)$ because $\tilde{\bfM}^+$ is independent of $w(0)$, which is crucial to the subsequent analysis.
    We also define the distribution $\dsQ\in\calM(\bbW^N)$ for $\bfw^+$ such that $\ev_{\dsQ}[(\bfw^+)(\bfw^+)']=\bfSigma^+$. Note that such a $\dsQ$ exists because $d\in\calD$. For this distribution, we take the expected value of each side of \cref{eq:J_diff} to give
    \begin{multline*}
         \ev_{\dsQ}\left[J(x(1),\tilde{\theta}^+(w(0)),\bfw^+) - J(x(0),\theta^0,\bfw)\mid w(0)\right] = \\ \ev_{\dsQ}\left[V_f(x(N+1)) - V_f(x(N)) + \ell(x(N),K_fx(N))\mid w(0)\right]
         -\ell(x(0),v^0(0)) \,. 
    \end{multline*}
    From \cref{as:term} and the fact that $x(N)\in\bbX_f$ for all $\bfw\in\bbW^N$, we have that
    \begin{equation*}
        \ev_{\dsQ}\left[V_f(x(N+1)) - V_f(x(N)) + \ell(x(N),K_fx(N))\mid w(0)\right] \leq \tr(G'PG\Sigma_N) \leq \delta_d
    \end{equation*}
    in which $\Sigma_N=\ev_{\dsQ}[w(N)w(N)']\in\bbB_d$ and $\delta_d:=\max_{\Sigma\in\bbB_d}\tr(G'PG\Sigma)$.  
    From the definition of $\bfSigma^+$ and optimality, we have
    \begin{equation*}
        V_d\Big(x(1),\tilde{\theta}^+(w(0))\Big)  = \ev_{\dsQ}\left[J(x(1),\tilde{\theta}^+(w(0)),\bfw^+)\mid w(0)\right],
    \end{equation*} 
    and therefore
    \begin{equation}\label{eq:E_Q}
        V_d\Big(x(1),\tilde{\theta}^+(w(0))\Big) \leq \ev_{\dsQ}\left[J(x(0),\theta^0,\bfw)\mid w(0)\right] - \ell(x(0),v^0(0)) + \delta_d \,.
    \end{equation}

    Choose any $\dsP\in\calP_d$ for the distribution of $w(0)$.
    From the definition of $\dsP$ and $\dsQ$, we have
    \begin{equation*}
        \ev_{\dsP}\left[\ev_{\dsQ}\left[J(x(0),\theta^0,\bfw)\mid w(0)\right]\right] \leq V_d(x(0),\theta^0) = V^0_d(x(0))
    \end{equation*}
    because $\theta\in\theta^0_d(x(0))$. We take the expected value of \cref{eq:E_Q} with respect to $\dsP$ and use this inequality to give
    \begin{equation}\label{eq:E_QP}
        \ev_{\dsP}\left[V_d\Big(x(1),\tilde{\theta}^+(w(0))\Big)\right] \leq  V^0_d(x(0)) - \ell(x(0),v^0(0)) + \delta_d \,.
    \end{equation}
    By optimality, we have
    \begin{equation*}
        V_d^0(x(1)) \leq V_d\Big(x(1),\tilde{\theta}^+(w(0))\Big) \,.
    \end{equation*}
    We combine this inequality with \cref{eq:E_QP} and substitute in $x=x(0)$, $x^+=x(1)$, $\kappa_d(x)=v^0(0)$ to give \cref{eq:cost_dec}. Note that the choices of $\dsP\in\calP_d$, $d\in\calD$, and $x(0)\in\calX$ were arbitrary and therefore \cref{eq:cost_dec} holds for all values in these sets. 
\end{proof}

The main difference between \cref{lem:costdec} and the typical expected cost decrease condition for SMPC is that \cref{eq:cost_dec} holds for all $\dsP\in\calP_d$, i.e., the inequality is distributionally robust. 

\begin{remark}[Key distinction from SMPC analysis]\label{rem:independent}
    In the proof of \cref{lem:costdec}, we show that for the candidate solution the worst case covariance, defined as $\bfSigma^+$ in \cref{eq:Sigma_plus}, is \textit{independent} of $w(0)$. This independence is essential to establish \cref{lem:costdec} and is derived from the affine control law parameterization. As such, these results may not hold for other (nonlinear) control law parameterizations. 
\end{remark}

We can then apply \cref{lem:costdec} to prove \cref{thm:performance}. 

\begin{proof}[Proof of \cref{thm:performance}]
    Choose $x\in\calX$, $d\in\calD$, and $\dsP\in\calP_d^{\infty}$. Define $x(i)=\phi_d(i;x,\bfw_{\infty})$ and $u(i)=\kappa_d(x(i))$. From \cref{lem:costdec}, we have that $\calX$ is RPI and
    \begin{equation}\label{eq:costdec_Q}
        \ev_{\dsQ}\left[V^0_d(x(i+1))\mid x(i)\right] \leq V^0_d(x(i)) - \ell(x(i),u(i)) + \max_{\Sigma\in\bbB_d}\tr(G'PG\Sigma)
    \end{equation}
    for all $\dsQ\in\calP_d$. Let $\delta_d:=\max_{\Sigma\in\bbB_d}\tr(G'PG\Sigma)$ to streamline notation. From the law of total expectation and \cref{eq:costdec_Q}, we have
    \begin{equation*}
    \ev_{\dsP}\left[\ell(x(i),u(i))\right] \leq \ev_{\dsP}\left[V^0_d(x(i))\right] - \ev_{\dsP}\left[V^0_d(x(i+1))\right] + \delta_d \,.
    \end{equation*}
    We sum both sides of this inequality from $i=0$ to $i=k-1$ and divide by $k\geq 1$ to give
    \begin{equation*}
        \calJ_k(x,d,\dsP) \leq \frac{\ev_{\dsP}\left[V^0_d(x(0))\right] - \ev_{\dsP}\left[V^0_d(x(k))\right]}{k} + \delta_d \,.
    \end{equation*}
    Note that $V^0_d(x(k))\geq 0$. We take the $\limsup$ as $k\rightarrow \infty$ of both sides of the inequality to give \cref{eq:performance}. 
\end{proof}

\subsection{Distributionally robust stage-wise performance}

To establish \cref{thm:resie}, we first establish the following upper bound for the optimal cost function.  
\begin{lemma}[Upper bound]\label{lem:upperbound}
    If \cref{as:convex,as:cost,as:term,as:track} hold, then there exists $c_2>0$ such that
    \begin{equation}\label{eq:upper_bound}
        V^0_d(x) \leq c_2|x|^2 + N\left(\max_{\Sigma\in\bbB_d}\tr(G'PG\Sigma)\right),
    \end{equation}
    for all $d\in\calD$ and $x\in\calX$.
\end{lemma}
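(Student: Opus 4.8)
The plan is to upper bound the value function $V^0_d(x)=\min_{\theta\in\Pi(x)}V_d(x,\theta)$ by evaluating $V_d(x,\cdot)$ at well-chosen \emph{feasible} candidate policies. I would use two candidates whose regions of feasibility together cover $\calX$: the terminal linear-feedback policy, which produces the sharp additive constant $N\delta_d$ (writing $\delta_d:=\max_{\Sigma\in\bbB_d}\tr(G'PG\Sigma)$) but is feasible only inside $\bbX_f$, and a crude, uniformly bounded feasible policy that controls the cost far from the origin.

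\emph{Terminal feedback candidate.} Let $A_K:=A+BK_f$ and let $\theta_K=(\bfM_K,\bfv_K(x))\in\Theta$ denote the disturbance-feedback parameterization reproducing the closed-horizon law $u(i)=K_f x(i)$; here $\bfM_K$ depends only on $(A,B,G,K_f)$ (not on $x$) while $\bfv_K(x)$ is linear in $x$. By \cref{as:term}, for $x\in\bbX_f$ this policy keeps every $x(i)\in\bbX_f$ and satisfies $(x(i),K_fx(i))\in\bbZ$ and $x(N)\in\bbX_f$ robustly, so $\theta_K\in\Pi(x)$ and in particular $\bbX_f\subseteq\calX$. To evaluate $V_d(x,\theta_K)=\max_{\bfSigma\in\bbB_d^N}L(x,\theta_K,\bfSigma)$, I would fix any $\bfSigma=\mathrm{diag}(\Sigma_0,\dots,\Sigma_{N-1})\in\bbB_d^N$ and telescope: the Lyapunov inequality $Q+K_f'RK_f\preceq P-A_K'PA_K$ from \cref{as:term} gives $\ev[\ell(x(k),K_fx(k))]\le \ev[V_f(x(k))]-\ev[V_f(A_Kx(k))]$, while zero mean and the block-diagonal (uncorrelated) structure of $\bfSigma$ kill the cross terms in $\ev[V_f(x(k+1))]=\ev[V_f(A_Kx(k))]+\tr(G'PG\Sigma_k)$. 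Summing over $k$ and cancelling the terminal term yields
\begin{equation*}
    L(x,\theta_K,\bfSigma)\le V_f(x)+\sum_{k=0}^{N-1}\tr(G'PG\Sigma_k).
\end{equation*}
Since the maximization over the block-diagonal set $\bbB_d^N$ separates across blocks, taking $\max_{\bfSigma\in\bbB_d^N}$ gives $V_d(x,\theta_K)\le V_f(x)+N\delta_d\le \lambda_{\max}(P)|x|^2+N\delta_d$. This already proves the claim for all $x\in\bbX_f$, and hence for all $x$ when $\bbX_f=\bbR^n$ (the second case of \cref{as:track}).

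\emph{Crude candidate away from the origin.} Since $0\in\mathrm{int}\,\bbX_f$, fix $r>0$ with $\{|x|\le r\}\subseteq\bbX_f$, so the previous step covers $|x|<r$. For $|x|\ge r$, pick any $\theta_0=(\bfM_0,\bfv_0)\in\Pi(x)$ (nonempty by definition of $\calX$). Because $\bbU$ and $\bbW$ are bounded with $0\in\mathrm{int}\,\bbW$ (\cref{as:convex}), the constraints $(x(k),u(k))\in\bbZ$ force $\bfv_0$ and $\bfM_0$ into a fixed bounded set independent of $x$ and $d$; moreover, for $d\in\calD$ every $\Sigma\in\bbB_d$ is realized by some $\dsP\in\calM(\bbW)$, whence $\tr\Sigma=\ev_{\dsP}[|w|^2]\le\sup_{w\in\bbW}|w|^2$. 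Bounding $L(x,\theta_0,\bfSigma)=|H_xx+H_u\bfv_0|^2+\tr((H_u\bfM_0+H_w)'(H_u\bfM_0+H_w)\bfSigma)$ then gives $V_d(x,\theta_0)\le 2\|H_x\|^2|x|^2+C_3$ for a constant $C_3$ independent of $x$ and $d$. For $|x|\ge r$ we absorb $C_3\le (C_3/r^2)|x|^2$, so $V^0_d(x)\le(2\|H_x\|^2+C_3/r^2)|x|^2\le c_2|x|^2+N\delta_d$. Taking $c_2:=\max\{\lambda_{\max}(P),\,2\|H_x\|^2+C_3/r^2\}$ covers both regions. (When \cref{as:track} instead supplies a bounded $\calX$, this crude step is if anything easier, since $|x|$ is then uniformly bounded.)

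The main obstacle is obtaining the \emph{precise} additive constant $N\delta_d$ rather than a crude $d$-independent bound: this forces the use of the terminal-feedback candidate together with the telescoping Lyapunov identity, and the feasibility of that candidate is guaranteed only on $\bbX_f$. Reconciling this with arbitrary $x\in\calX$ is exactly what necessitates the two-region split and the case distinction in \cref{as:track}. The remaining care is uniformity over all $d\in\calD$, which is handled by the separability of the maximization over the block-diagonal set $\bbB_d^N$ and by the bound $\tr\Sigma\le\sup_{w\in\bbW}|w|^2$ valid for every $\Sigma\in\bbB_d$.
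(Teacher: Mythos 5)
Your proposal is correct, and its first half is exactly the paper's argument: the paper also evaluates $V_d$ at the disturbance-feedback parameterization $\theta_f(x)=(\bfM_f,\bfv_f(x))$ of the terminal law $u=K_fx$ (feasible on $\bbX_f$ by \cref{as:term}), telescopes the Lyapunov inequality against a distribution realizing each $\bfSigma\in\bbB_d^N$ (such a distribution exists precisely because $d\in\calD$), and obtains $V_d^0(x)\leq \bar\lambda_P|x|^2+N\delta_d$ on $\bbX_f$, with the $\bbX_f=\bbR^n$ case closed immediately.

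Where you genuinely diverge is the extension to $x\in\calX\setminus\bbX_f$. The paper handles this abstractly: it invokes boundedness of $\calX$ (from \cref{as:track}) and boundedness of $\calD$ (its \cref{lem:calD}) to define $F(x)=\sup_{d\in\calD}\{V_d^0(x)-N\delta_d\}$ and then $r=\sup\{F(x)/|x|^2 \mid x\in\calX\setminus\bbX_f\}$, arguing $r$ is finite because $\bbX_f$ contains a neighborhood of the origin. You instead bound $V_d(x,\theta_0)$ explicitly for an \emph{arbitrary} feasible $\theta_0\in\Pi(x)$, using the uniform boundedness of $\Pi(x)\subseteq\mathbb{M}\times\bbU^N$ (the content of \cref{lem:compact}) together with $\tr\Sigma\leq\sup_{w\in\bbW}|w|^2$ for every $\Sigma\in\bbB_d$, $d\in\calD$ (the same realizability fact underlying \cref{lem:calD}), yielding $V_d^0(x)\leq 2\lVert H_x\rVert^2|x|^2+C_3$ with $C_3$ independent of $x$ and $d$, and then absorbing $C_3$ into the quadratic term on $\{|x|\geq r\}$. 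Your route buys two things: it is fully constructive (the paper's claim that its supremum $r$ is finite really needs $F$ to be \emph{bounded} on $\calX\setminus\bbX_f$, not merely pointwise finite, which is exactly what your explicit bound supplies), and it never uses boundedness of $\calX$ at all, so for this lemma the dichotomy ``$\calX$ bounded or $\bbX_f=\bbR^n$'' in \cref{as:track} is not needed in your version. The trade-off is negligible; the two proofs consume the same ingredients, and yours packages them more sharply.
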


\begin{proof}
    For any $x\in\bbX_f$, we define the control law as $u=K_fx$ from \cref{as:term}. Therefore,
    \begin{equation*}
        \bfu = \bfK_f\bfx, \quad \bfx = (I-\bfB\bfK_f)^{-1}(\bfA x + \bfG \bfw)  
    \end{equation*}
    in which $\bfK_f := \begin{bmatrix} I_N\otimes K_f & 0\end{bmatrix}$. 
    Note that the inverse $(I-\bfB\bfK_f)^{-1}$ exists because $\bfB\bfK_f$ is nilpotent (lower triangular with zeros along the diagonal). We represent this control law as $\theta_f(x)=(\bfM_f,\bfv_f(x))$ so~that
    \begin{equation}\label{eq:theta_f}
        \begin{split}
        \bfv_f(x) & = \bfK_f(I-\bfB\bfK_f)^{-1}\bfA x \\ \bfM_f & = \bfK_f(I-\bfB\bfK_f)^{-1}\bfG 
        \end{split}
    \end{equation}
    We have from \cref{as:term} that this control law ensures that $(x(k),u(k))\in\bbZ$ and $x(k+1)\in\bbX_f$ for all $k\in\{0,\dots,N-1\}$. Therefore $\theta_f(x)\in\Pi(x)$ for all $x\in\bbX_f$ and $d\in\calD$. Choose any $x\in\bbX_f$ and $d\in\calD$. Choose any $\bfSigma\in\bbB^N_d$ and corresponding $\dsP\in\calP_d^N$ such that $\ev_{\dsP}\left[\bfw\bfw'\right]=\bfSigma$. From \cref{as:term}, we have that
    \begin{equation*}
        \ev_{\dsP}\left[V_f(x(k+1)) - V_f(x(k)) + \ell(x(k),K_fx(k))\right] \leq \tr(G'PG\Sigma_k) \leq \delta_d
    \end{equation*}
    in which $x(k+1)=(A+BK_f)x(k)+Gw(k)$ for all $k\in\{0,1,\dots,N-1\}$, $x(0)=x$, and $\delta_d:=\max_{\Sigma\in\bbB_d}\tr(G'PG\Sigma)$. We sum both sides of this inequality from $k=0$ to $k=N-1$ and rearrange to give
    \begin{equation*}
        L(x,\theta_f(x),\bfSigma) \leq V_f(x(0)) + N\delta_d
    \end{equation*}
    for all $\bfSigma\in\bbB^N_d$. 
    Therefore,
    \begin{equation*}
        V^0_d(x) \leq V_d(x,\theta_f(x)) \leq V_f(x) + N\delta_d \leq \bar{\lambda}_P|x|^2 + N\delta_d
    \end{equation*}
    in which $\bar{\lambda}_P$ is the maximum eigenvalue of $P$ for all $x\in\bbX_f$. If $\bbX_f=\bbR^n$, the proof is complete because $\calX\subseteq\bbR^n$. Otherwise, we use the fact that $\calX$ is bounded to extend this bound to all $x\in\calX$. Define the function
    \begin{equation*}
        F(x) = \sup \left\{V^0_d(x) - N\delta_d \ \middle| \ d\in\calD \right\} \,.
    \end{equation*}
    Since $\bbW$ is bounded, $\calD$ is bounded as well (\cref{lem:calD}). 
    Therefore, $F(x)$ is finite for all $x\in\calX$. We further define
    \begin{equation*}
        r := \sup\left\{F(x)/(|x|^2) \ \middle| \ x\in\calX\setminus \bbX_f\right\}
    \end{equation*}
    Note that since $\bbX_f$ contains the origin in its interior and $F(x)$ is finite for all $x\in\calX$, $r$ exists and is finite. Therefore,
    \begin{equation*}
        V^0_d(x) \leq F(x) + N\delta_d \leq r|x|^2 + N\delta_d
    \end{equation*}
    for all $x\in\calX\setminus\bbX_f$. We define $c_2:=\max\{r,\bar{\lambda}_P\}$ and substitute in the definition of $\delta_d$ to complete the proof. 
\end{proof}

With this upper bound, we prove \cref{thm:resie} by using $V_d^0(x)$ as a Lyapunov-like function.  

\begin{proof}[Proof of \cref{thm:resie}]
    Since $Q\succ 0$, there exists $c_1>0$ such that for all $x\in\calX$:
    \begin{equation*}
        c_1|x|^2 \leq \ell(x,\kappa_d(x)) \leq V^0_d(x) \,.
    \end{equation*}
    Let $\delta_d=\max_{\Sigma\in\bbB_d}\tr(G'PG\Sigma)$. From \cref{eq:upper_bound}, we have
    \begin{equation*}
        -|x|^2 \leq -(1/c_2)V^0_d(x) + (N\delta_d/c_2)
    \end{equation*}
    We combine this inequality and the lower bound for $\ell(x,\kappa_d(x))$ with \cref{eq:cost_dec} to give
    \begin{equation}\label{eq:exp_costdec}
        \ev_{\dsQ}\left[V^0_d(x^+)\mid x\right] \leq \lambda V^0_d(x) + (1+Nc_1/c_2)\delta_d,
    \end{equation}
    in which $\lambda = (1-c_1/c_2)\in(0,1)$ and $x^+=Ax+B\kappa_d(x)+Gw$ for all $\dsQ\in\calP_d$, $d\in\calD$, and $x\in\calX$.  

    Choose $x\in\calX$, $d\in\calD$, and $\dsP\in\calP_d^{\infty}$. Define $x(k)=\phi_d(k;x,\bfw_{\infty})$ and $u(k)=\kappa_d(x(k))$. Since $\calX$ is RPI for the closed-loop system (\cref{lem:costdec}), $x(k)\in\calX$ for all $x\in\calX$, $d\in\calD$, $\bfw_{\infty}\in\bbW^{\infty}$, and $k\geq 0$. From \cref{eq:exp_costdec} we have
    \begin{equation*}
        \ev_{\dsQ}\left[V^0_d(x(k+1))\mid x(k)\right] \leq \lambda V^0_d(x(k)) + (1+Nc_1/c_2)\delta_d
    \end{equation*}
    for all $\dsQ\in\calP_d$. 
    We take the expected value of this inequality with respect to $\dsP$ and the corresponding $\dsQ$ to give
    \begin{equation*}
         \ev_{\dsP}\left[V^0_d(x(k+1))\right] \leq \lambda \ev_{\dsP}\left[V^0_d(x(k))\right] + (1+Nc_1/c_2)\delta_d \,.
    \end{equation*}
    By iterating, we have
    \begin{equation*}\ev_{\dsP}\left[V^0_d(x(k))\right] \leq \lambda^k V^0_d(x(0)) + \frac{1+Nc_1/c_2}{1-\lambda} \delta_d \,.
    \end{equation*}
    Use the lower/upper bounds for $V^0_d(\cdot)$, 
    rearrange, and define $c:=c_2/c_1$, $\gamma := N + (c_1^{-1}+Nc_2^{-1})/(1-\lambda)$ to give \cref{eq:resie}. 
\end{proof}

\subsection{Pathwise input-to-state stability}

To establish \cref{thm:iss}, we first establish the following interesting property for the DRMPC control law within the terminal region $\bbX_f$, similar to \citep[Lemma 4.18]{goulart:2007}.
\begin{lemma}[Terminal control law]\label{lem:K_f}
    If \cref{as:convex,as:cost,as:track,as:term_lqr} hold, then 
    \begin{equation*}
        \kappa_d(x):= K_fx,
    \end{equation*}
    for all $x\in\bbX_f$ and $d\in\calD$. Moreover, $\bbX_f$ is RPI for \cref{eq:cl}. 
\end{lemma}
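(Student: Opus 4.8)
The plan is to exploit the special structure of the DARE terminal cost to rewrite the worst-case objective $V_d(x,\theta)$ as a sum of two nonnegative pieces, one depending only on the nominal inputs $\bfv$ and one only on the feedback gains $\bfM$. Writing $S:=R+B'PB$, which is positive definite because $R,P\succ 0$ under \cref{as:term_lqr}, the DARE \cref{eq:dare} yields the completion-of-squares identity
\[
    \ell(x,u)+V_f(Ax+Bu+Gw)=|x|_P^2+|u-K_fx|_S^2+2(Ax+Bu)'PGw+|w|_{G'PG}^2
\]
for all $x,u,w$. Summing this along the prediction horizon telescopes the terminal-cost terms, so that $\Phi(x,\bfu,\bfw)=|x|_P^2+\sum_{k=0}^{N-1}|u(k)-K_fx(k)|_S^2+\sum_{k=0}^{N-1}\big(2(Ax(k)+Bu(k))'PGw(k)+|w(k)|_{G'PG}^2\big)$.

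Next I would take $\ev_{\dsP}$ for an arbitrary $\dsP\in\calP_d^N$. Since the disturbances are zero mean and independent in time and $(x(k),u(k))$ depends only on $w(0),\dots,w(k-1)$ by causality of \cref{eq:feedbackaffine}, each cross term $\ev_{\dsP}[(Ax(k)+Bu(k))'PGw(k)]$ vanishes, leaving $L(x,\theta,\bfSigma)=|x|_P^2+\ev_{\dsP}\big[\sum_k|u(k)-K_fx(k)|_S^2\big]+\sum_k\tr(G'PG\Sigma_k)$. Splitting each trajectory as $u(k)=\bar u(k)+\eta(k)$ and $x(k)=\bar x(k)+e(k)$ into its nominal part (evaluated at $w=0$, a function of $\bfv$) and its zero-mean disturbance part (a function of $\bfM$) makes the middle term separate, by orthogonality of the two parts, into $g(\bfv):=\sum_k|\bar u(k)-K_f\bar x(k)|_S^2\ge 0$ and a purely $\bfM$- and $\bfSigma$-dependent residual $\sum_k\ev_{\dsP}|\eta(k)-K_fe(k)|_S^2$. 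Maximizing over $\bfSigma\in\bbB_d^N$ then gives the separable form $V_d(x,\theta)=|x|_P^2+g(\bfv)+h(\bfM)$, and evaluating the inner max at a trace-maximizing covariance shows $h(\bfM)\ge N\delta_d$, where $\delta_d:=\max_{\Sigma\in\bbB_d}\tr(G'PG\Sigma)$, since the feedback residual is nonnegative.

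Finally I would close the argument with the candidate $\theta_f(x)$ of \cref{eq:theta_f}. For $x\in\bbX_f$, \cref{as:term_lqr} gives $\theta_f(x)\in\Pi(x)$ exactly as in the proof of \cref{lem:upperbound}; its nominal input follows the LQR law so $g(\bfv_f)=0$, and $u=K_fx$ forces $\eta(k)=K_fe(k)$ identically so the residual vanishes and $h(\bfM_f)=N\delta_d$. Thus $\theta_f(x)$ attains both lower bounds and is optimal, whence $V_d^0(x)=|x|_P^2+N\delta_d$. Conversely, any $(\bfM^0,\bfv^0)\in\theta^0_d(x)$ must attain both bounds, so $g(\bfv^0)=0$; as $S\succ 0$ this forces $\bar u^0(k)=K_f\bar x^0(k)$ for every $k$, and at $k=0$, where $\bar x^0(0)=x$ and $\bar u^0(0)=v^0(0)$, this reads $v^0(0)=K_fx$. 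Hence $\kappa_d(x)=K_fx$ for every admissible selection, and the RPI claim follows immediately since $x^+=(A+BK_f)x+Gw\in\bbX_f$ for all $x\in\bbX_f$ and $w\in\bbW$ by \cref{as:term_lqr}, uniformly in $d\in\calD$ because $\kappa_d\equiv K_f$ on $\bbX_f$. I expect the main obstacle to be the bookkeeping that isolates the $\bfM$-dependent residual and certifies it is nonnegative at the worst-case $\bfSigma$ yet vanishes exactly for $\bfM_f$, together with the careful use of the zero-mean, independence, and causality properties that annihilate the cross terms at the block-diagonal covariance structure of $\bbB_d^N$.
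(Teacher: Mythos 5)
Your proof is correct and takes essentially the same route as the paper: both rest on the DARE completion-of-squares decomposition of the expected cost into $|x|_P^2$, a nonnegative nominal-input residual, a nonnegative feedback residual, and the disturbance trace term (the paper cites \citep[eq.~(4.46)]{goulart:2007} where you derive it by telescoping), then show the feasible candidate $\theta_f(x)$ annihilates both residuals and attains the lower bound, and finally use $S\succ 0$ to force $v^0(0)=K_fx$ for every optimizer, with the RPI claim following from \cref{as:term_lqr}. Your stage-wise nominal/disturbance splitting is just a componentwise rewriting of the paper's stacked terms $|\bfv-\bfK_f\bfA x-\bfK_f\bfB\bfv|^2_{\bfS}$ and $\ev_{\dsP}\left[|(\bfM-\bfK_f\bfB\bfM-\bfK_f\bfG)\bfw|^2_{\bfS}\right]$.
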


\begin{proof}
    From the definition of $P$ and $K_f$ in \cref{as:term_lqr} and any $\dsP\in\calM(\bbW^{N})$, we have
    \begin{equation*}
        \ev_{\dsP}\left[ \Phi(x,\bfu,\bfw) \right] = |x|_P^2 + \ev_{\dsP}\left[|\bfu-\bfK_f\bfx|^2_{\bfS}\right] + \ev_{\dsP}\left[ |\bfw|^2_{\bfP}\right], 
    \end{equation*}
    in which $\bfK_f := \begin{bmatrix}I_N\otimes K_f & 0 \end{bmatrix}$, $\bfS := I_N \otimes (R+B'PB)$ $\bfP := I_N\otimes (G'PG)$ (see \citep[eq. (4.46)]{goulart:2007}).
    Using the control law parameterization $\theta=(\bfM,\bfv)$, we have
    \begin{multline}\label{eq:EJ}
        \ev_{\dsP}\left[ J(x,\theta,\bfw) \right] = |x|_P^2 + |\bfv - \bfK_f\bfA x - \bfK_f\bfB\bfv|^2_{\bfS} \\
        +  \ev_{\dsP}\left[|(\bfM-\bfK_f\bfB\bfM - \bfK_f\bfG)\bfw|^2_{\bfS}\right]  + \ev_{\dsP}\left[ |\bfw|^2_{\bfP}\right] \,.
    \end{multline}
    
    We have that the optimal solution is bounded by
    \begin{equation}
        \min_{\theta\in\Pi(x)}\max_{\dsP\in\calP_d^N} \ev_{\dsP}\left[ J(x,\theta,\bfw) \right] \geq |x|_P^2 + \max_{\bfSigma\in\bbB_{d}^N}\tr\Big(\bfP\bfSigma\Big), \label{eq:Vopt_lb}
    \end{equation}
    for all $x\in\calX$. This lower bound is obtained by
    \begin{equation*}
        \bfv_c(x) := (I - \bfK_f\bfB)^{-1}\bfK_f\bfA x \qquad \bfM_c := (I-\bfK_f\bfB)^{-1}\bfK_f\bfG
    \end{equation*}
    and $\theta_c(x)=(\bfM_c,\bfv_c(x))$ for any $\dsP\in\calP_d^N$. Note that the inverse $\left(I-\bfK_f\bfB\right)^{-1}$ exists because $\bfK_f\bfB$ is nilpotent (lower triangular with zeros along the diagonal). By application of the matrix inversion lemma, we have that $\theta_c(x)=\theta_f(x)$ in \cref{eq:theta_f}. Therefore $\theta_c(x)=\theta_f(x)\in\Pi(x)$ for all $x\in\bbX_f$.      
    Moreover, the solution $\bfv_c(x)$ is unique because $\bfS\succ 0$. Therefore,
    \begin{equation*}
        \kappa_d(x) = v^0(0;x) = v_c(0;x) = K_fx
    \end{equation*}
    is the unique control law for all $x\in\bbX_f$. 
    Since $\kappa_d(x)=K_fx$ for all $x\in\bbX_f$ and $\bbX_f$ is RPI for the system $x^+ = (A+BK_f)x + Gw$, we have that $\bbX_f$ is also RPI for \cref{eq:cl}. 
\end{proof}

\Cref{lem:K_f} ensures that within the terminal region, the DRMPC control law is \textit{equivalent} to the LQR control law defined in \cref{as:term_lqr}. Moreover, this controller is the same regardless of the choice of $d\in\calD$ and renders the terminal set RPI. Therefore, once the state of the system reaches $\bbX_f$, the control laws for DRMPC, SMPC, RMPC, and LQR are the same. We can now prove \cref{thm:iss}. 

\begin{proof}[Proof of \cref{thm:iss}]
    Choose $d\in\calD$ and $x\in\calX$. 
    We define $(\bfM^0,\bfv^0)=\theta^0\in\theta^0_d(x)$ and the corresponding candidate solution $(\tilde{\bfv}^+(w),\tilde{\bfM}^+) = \tilde{\theta}^+(w)$ defined in \cref{eq:candidate}. 
    Recall that $\tilde{\bfv}^+(w)$ is an affine function of $w$, i.e., 
    \begin{equation*}
        \tilde{\bfv}^+(w) = c + Zw
    \end{equation*}
    in which $c\in\bbR^{Nm}$ and $Z\in\bbR^{Nm\times q}$ are fixed quantities for a given $\theta^0$.  
    Let 
    \begin{equation*}
        \widehat{x}^+=Ax+B\kappa_d(x) = Ax+Bv^0(0) \,.
    \end{equation*}
    From the Proof of \cref{lem:costdec}, we have that
    \begin{equation}
        \ev_{\dsQ}\left[V_d(\widehat{x}^++Gw,\tilde{\theta}^+(w))\right] \leq V_d^0(x) - \ell(x,v(0)) + \delta_d, \label{eq:costdec_xhat}
    \end{equation}
    for all $\dsQ\in\calP_d$ in which $\delta_d:=\max_{\Sigma\in\bbB_d}\tr(G'PG\Sigma)$.    
    Define
    \begin{align*}
        \bfSigma^+ & := \arg\max_{\bfSigma\in\bbB_{d}^N} L(\widehat{x}^++Gw,\tilde{\theta}^+(w),\bfSigma) \\
        & = \arg\max_{\bfSigma\in\bbB_{d}^N} \tr\left((H_u\tilde{\bfM}^++H_w)'(H_u\tilde{\bfM}^++H_w)\bfSigma\right),
    \end{align*}
    where the equality holds because $|H_xx+H_u\bfv|^2$ is a constant with respect to $\bfSigma$. Note that $\bfSigma^+$ is independent of $w$ because $\tilde{\bfM}^+$ is independent of $w$. We also define $\dsP\in\calP_d^N$ such that $\bfSigma^+=\ev_{\dsP}\left[(\bfw^+)(\bfw^+)'\right]$. 
    By applying \cref{eq:EJ}, we have
    \begin{align}
         & L(\widehat{x}^+,\tilde{\theta}^+(0),\tilde{\bfSigma}^+) - L(\widehat{x}^++Gw,\tilde{\theta}^+(w),\tilde{\bfSigma}^+) \nonumber \\
         & = |\widehat{x}^+|_P^2 + |c-\bfK_f\bfA\widehat{x}^+ - \bfK_f\bfB c|_{\bfS}^2 - |\widehat{x}^++Gw|_P^2 \nonumber \\
         & \quad - |c+Zw - \bfK_f\bfA (\widehat{x}^++Gw) - \bfK_f\bfB(c+Zw)|^2_{\bfS},\label{eq:L_diff}
    \end{align}
    and note that the terms involving $\dsP$ and $\tilde{\bfM}^+$ in \cref{eq:EJ} do not change with $w$ and therefore vanish in this difference. By the definition of $\bfSigma^+$ and optimality, we have that
    \begin{equation}
         V_d^0(\widehat{x}^+) - V_d(\widehat{x}^++Gw,\tilde{\theta}^+(w))
         \leq L(\widehat{x}^+,\tilde{\theta}^+(0),\tilde{\bfSigma}^+) - L(\widehat{x}^++Gw,\tilde{\theta}^+(w),\tilde{\bfSigma}^+) \,.\label{eq:V_diff}
    \end{equation}

    We now define
        $\overline{\Sigma} := \arg\max_{\Sigma\in\bbB_{d}}\tr(G'PG\Sigma)$
    and choose $\overline{\dsQ}\in\calP_d$ such that $\ev_{\overline{\dsQ}}[ww']=\overline{\Sigma}$. We combine \cref{eq:L_diff}, \cref{eq:V_diff}, and take the expected value with respect to $\overline{\dsQ}$:
    \begin{align*}
        & V_d^0(\widehat{x}^+) - \ev_{\overline{\dsQ}}\left[V_d(\widehat{x}^++Gw,\tilde{\theta}^+(w))\right] \\
        & \quad = -\tr(G'PG\overline{\Sigma}) - \ev_{\overline{\dsQ}}\left[|Zw-\bfK_f\bfA Gw - \bfK_f\bfB Zw|^2_{\bfS}\right] \\
        & \quad \leq -\delta_d \,.
    \end{align*}
    We combine this inequality with \cref{eq:costdec_xhat} to give
    \begin{equation}\label{eq:Vopt_costdec}
        V_d^0(\widehat{x}^+) \leq V_d^0(x) - \ell(x,v(0)) \leq V_d^0(x) - c_3|x|^2,
    \end{equation}
    in which $c_3>0$ because $Q\succ 0$. Note that the choice of $x\in\calX$ was arbitrary and therefore this inequality holds for all $x\in\calX$. Next, we define the Lyapunov function 
    \begin{equation*}
        H(x) := V_d^0(x) - \max_{\bfSigma\in\bbB_{d}^N}\tr(\bfP\bfSigma),
    \end{equation*}
    in which $\bfP:=I_N\otimes (G'PG)$ from \cref{eq:Vopt_lb}. Note that $V_d(x,\theta)$ is convex by Danskin's Theorem (see \cite[Prop.\,A.3.2]{ref:bertsekas2009convex}). Thus, $V_d^0(x)$ is the partial minimization of a convex function and also convex (see \cite[Prop.\,3.3.3]{ref:bertsekas2009convex}). Therefore, $H:\calX\rightarrow\bbR_{\geq 0}$ is convex because $V_d^0(x)$ is convex. From \cref{eq:Vopt_costdec}, \cref{eq:Vopt_lb}, and \cref{lem:upperbound}, there exist $c_1,c_2,c_3>0$ such that $c_1|x|^2\leq H(x)\leq c_2|x|^2$ and $H(\widehat{x}^+) \leq H(x) - c_3|x|^2$
    Since $H(x)$ is a convex Lyapunov function, $x^+=\widehat{x}^++Gw$, and $\calX$ is compact with the origin in its interior, we have from \citep[Prop. 4.13]{goulart:2007} that \cref{eq:cl} is ISS for any $d\in\calD$. 
\end{proof}

\subsection{Exact long-term performance}

For the class of disturbances in $\calQ$, \citet{munoz:cannon:2020} established that ISS systems converge to the minimal RPI set for the system with probability one. By \cref{as:term_lqr}, the terminal set must contain the minimal RPI set for the system. Thus, we have the following result adapted from \citep[Thm. 5]{munoz:cannon:2020}. 

\begin{lemma}[Convergence to terminal set]\label{lem:bc}
    If \cref{as:convex,as:cost,as:track,as:term_lqr} hold, then for all $\dsP\in\calQ$, $d\in\calD$, and $x\in\calX$, there exists $p\in[0,\infty)$ such that
    \begin{equation*}
        \sum_{k=0}^{\infty}\dsP\big(\phi_d(k;x,\bfw_{\infty})\notin\bbX_f\big) \leq p \,.
    \end{equation*}
\end{lemma}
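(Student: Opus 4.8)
The plan is to recast the stated sum as the tail sum of a hitting time and then establish a geometric tail for that hitting time. Let $\tau := \inf\{k\ge 0 : \phi_d(k;x,\bfw_{\infty})\in\bbX_f\}$ denote the first time the closed-loop state enters the terminal set. By \cref{lem:K_f}, $\bbX_f$ is RPI for \cref{eq:cl}, so once the trajectory enters $\bbX_f$ it never leaves; consequently $\{\phi_d(k;x,\bfw_{\infty})\notin\bbX_f\}=\{\tau>k\}$ for every $k$, because $\phi_d(k)\notin\bbX_f$ forces $\phi_d(j)\notin\bbX_f$ for all $j\le k$. Hence
\begin{equation*}
\sum_{k=0}^{\infty}\dsP\big(\phi_d(k;x,\bfw_{\infty})\notin\bbX_f\big) = \sum_{k=0}^{\infty}\dsP(\tau>k) = \ev_{\dsP}[\tau],
\end{equation*}
and it suffices to produce constants $T\ge 1$ and $\eta\in(0,1]$ with $\dsP(\tau>jT)\le(1-\eta)^{j}$: the sum is then bounded by $T\sum_{j\ge 0}(1-\eta)^{j}=T/\eta=:p$, using that $k\mapsto\dsP(\tau>k)$ is nonincreasing.

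Next I would build the mechanism that drives the state into $\bbX_f$. If $\bbX_f=\bbR^n$ the claim is trivial with $p=0$, so by \cref{as:track} I may instead assume $\calX$ is bounded; then $\calX$ is compact by \cref{lem:compact}, with radius $R:=\sup_{x\in\calX}|x|<\infty$, and it is RPI by \cref{lem:costdec}, so $\phi_d(k;x,\bfw_{\infty})\in\calX$ for all $k$. Since $\bbX_f$ contains the origin in its interior, fix $\rho>0$ with $\{z:|z|\le\rho\}\subseteq\bbX_f$. Using the pathwise ISS bound of \cref{thm:iss} together with the time-invariance of the closed loop (so the bound may be restarted from any state in $\calX$), choose a horizon $T$ with $\lambda^{T}cR\le\rho/2$ and a threshold $\varepsilon>0$ with $\gamma(\varepsilon)\le\rho/2$. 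Then whenever $\phi_d(k)\in\calX$ and $|w(i)|\le\varepsilon$ for all $i\in\{k,\dots,k+T-1\}$, the restarted estimate gives $|\phi_d(k+T)|\le\lambda^{T}cR+\gamma(\varepsilon)\le\rho$, i.e. $\phi_d(k+T)\in\bbX_f$.

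Finally I would run a Borel--Cantelli-type counting argument over time blocks, mirroring the structure of \citep[Thm. 5]{munoz:cannon:2020}. Partition $\{0,1,2,\dots\}$ into consecutive blocks $B_j:=\{jT,\dots,(j+1)T-1\}$. Because $\dsP\in\calQ$ is i.i.d. in time with $q:=\dsP(|w|\le\varepsilon)>0$, the events ``$|w(i)|\le\varepsilon$ for all $i\in B_j$'' are independent across $j$, each of probability $\eta:=q^{T}>0$. By the previous paragraph, if block $B_j$ is all-small then (the state entering it lies in $\calX$ by RPI, so) $\phi_d((j+1)T)\in\bbX_f$ and $\tau\le(j+1)T$; equivalently, $\{\tau>(j+1)T\}$ is contained in the event that none of $B_0,\dots,B_j$ is all-small, which has probability $(1-\eta)^{j+1}$. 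This yields $\dsP(\tau>jT)\le(1-\eta)^{j}$ and closes the argument with $p=T/\eta$.

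I expect the main obstacle to be the \textbf{restart of the ISS estimate using only the window disturbances}. The bound in \cref{thm:iss} is written with the full gain $\gamma(\|\bfw_{\infty}\|)$, whereas the block argument needs a gain depending only on $w(k),\dots,w(k+T-1)$. This requires the causal refinement
\begin{equation*}
|\phi_d(k+T;x,\bfw_{\infty})| \le \lambda^{T}c\,|\phi_d(k;x,\bfw_{\infty})| + \gamma\Big(\max_{k\le i\le k+T-1}|w(i)|\Big),
\end{equation*}
which follows because $\phi_d$ is causal (the state at time $k+T$ depends only on disturbances up to index $k+T-1$) and $0\in\bbW$, so the infinite tail may be set to zero without altering the relevant state. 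A secondary point to check is the independence of the block events, which is precisely where the \emph{i.i.d.} structure encoded in $\calQ$ (as opposed to the merely time-independent $\calP_d^{\infty}$) is indispensable.
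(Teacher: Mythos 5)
Your proposal is correct, and it takes a genuinely different route from the paper: the paper does not prove \cref{lem:bc} from scratch, but obtains it by adaptation of \citep[Thm. 5]{munoz:cannon:2020}, arguing that pathwise ISS (\cref{thm:iss}) together with disturbances in $\calQ$ implies convergence with probability one to the minimal RPI set, which lies inside $\bbX_f$ because \cref{lem:K_f} makes $\bbX_f$ RPI. You instead re-derive the required summability directly: the hitting-time reformulation (valid precisely because of the RPI property from \cref{lem:K_f}), the causal restart of the ISS estimate, and the independent-blocks argument yielding the geometric tail $\dsP(\tau>jT)\le(1-\eta)^j$ constitute exactly the probabilistic content that the paper outsources to the citation. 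Your resolution of the main technical obstacle is sound: by time-invariance of the closed loop, RPI of $\calX$ (\cref{lem:costdec}), causality of $\phi_d$, and the fact that $0\in\bbW$ allows the tail of the disturbance sequence to be replaced by zeros, the gain term in \cref{thm:iss} can indeed be evaluated only on the window disturbances. Your reliance on identical (not merely independent) marginals to get a uniform block probability $\eta=q^T>0$ is also consistent with the paper, whose prose explicitly defines $\calQ$ as i.i.d. What your approach buys is a self-contained proof with an explicit constant $p=T/\eta$ and a transparent accounting of where each hypothesis enters (\cref{as:track} for compactness of $\calX$, \cref{lem:compact} for closedness, \cref{lem:K_f}, \cref{thm:iss}, and the small-disturbance mass condition in $\calQ$); what the paper's route buys is brevity, at the cost of deferring the verification that the cited theorem's hypotheses and conclusion align with the present setting. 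Both arguments ultimately rest on the same two pillars—pathwise ISS and nonzero probability of arbitrarily small disturbances—so yours can be read as an instantiation of the cited result rather than a conceptually distinct mechanism.
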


From \cref{lem:K_f} and the Borel-Cantelli lemma, \cref{lem:bc} implies that for all $x\in\calX$, we have
\begin{equation*}
    \dsP\left(\lim_{k\rightarrow\infty} \phi(k;x,\bfw_{\infty})\in\bbX_f\right) = 1 \,.
\end{equation*}
In other words, the state of the closed-loop system converges to the terminal set $\bbX_f$ with probability one. 
Once in $\bbX_f$, the closed-loop state remains in this terminal set by applying the fixed control law $K_fx$ for all subsequent time step (\cref{lem:K_f}). The long-term performance of the closed-loop system is therefore determined by the control law $K_f$ and, by definition, the matrix $P$ from the DARE in \cref{eq:dare}. We now prove \cref{thm:performance_opt} by formalizing these arguments. 

\begin{proof}[Proof of \cref{thm:performance_opt}]
    Choose $d\in\calD$, $x\in\calX$, and $\dsP\in\calQ$. Define $x(i)=\phi_d(i;x,\bfw_{\infty})$, $u(i)=\kappa_d(x(i))$, $\Sigma=\ev_{\dsP}\left[w(i)w(i)'\right]$, and
    \begin{equation*}
        \zeta(i) := |x(i+1)|_P^2 - |x(i)|_P^2 + \ell(x(i),u(i)) - |Gw(i)|_P^2 \,.
    \end{equation*}
    Recall that $x(i)\in\calX$ for all $i\geq 0$ because $\calX$ is RPI. From \cref{lem:K_f}, we have that if $x(i)\in\bbX_f$, $u(i)=K_fx(i)$ and therefore $\ev_{\dsP}\left[\zeta(i)\right] = 0$. Therefore, we have
    \begin{equation*}
        \ev_{\dsP}\left[\zeta(i)\right] = \ev_{\dsP}\left[\zeta(i)\mid x(i)\notin\bbX_f\right]\dsP\left(x(i)\notin\bbX_f\right) \,.
    \end{equation*}
    Since $\calX$, $\bbU$, and $\bbW$ are bounded, there exists $\eta\geq 0$ such that $|\zeta(i)|\leq \eta$. Thus, 
    \begin{equation*}
        -\eta\dsP\left(x(i)\notin\bbX_f\right)\leq \ev_{\dsP}\left[\zeta(i)\right] \leq \eta\dsP\left(x(i)\notin\bbX_f\right)\,.
    \end{equation*}

    By definition, it holds that
    \begin{equation*}
        \ev_{\dsP}\left[\ell(x(i),u(i)) \right] = \ev_{\dsP}\left[|x(i)|_P^2 - |x(i+1)|_P^2\right] + \ev_{\dsP}\left[\zeta(i)\right] + \tr(G'PG\Sigma) \,.
    \end{equation*}
    We sum both sides from $i=0$ to $k-1$, divide by $k\geq 1$, and rearrange to give
    \begin{equation}\label{eq:sum_delta}
        \calJ_k(x,d,\dsP) = \frac{|x(0)|^2_P - \ev_{\dsP}\left[|x(k)|_P^2\right]}{k} + \frac{1}{k}\sum_{i=0}^{k-1}\ev_{\dsP}\left[\zeta(i)\right] + \tr(G'PG\Sigma) \,.
    \end{equation}
    
    We apply the upper bound on $\ev_{\dsP}\left[\zeta(i)\right]$ in \cref{eq:sum_delta} and note that $|x(k)|_P^2\geq 0$ to give
    \begin{equation*}
        \calJ_k(x,d,\dsP) \leq \frac{|x(0)|^2_P}{k} + \frac{\eta}{k}\sum_{i=0}^{k-1}\dsP\left(x(i)\notin\bbX_f\right) + \tr(G'PG\Sigma) \,.
    \end{equation*}
    From \cref{lem:bc}, there exists $p\in[0,\infty)$ such that
    \begin{equation*}
        \calJ_k(x,d,\dsP) \leq \frac{|x(0)|^2_P+\eta p}{k} + \tr(G'PG\Sigma),
    \end{equation*}
    and therefore\begin{equation}\label{eq:limsup}
        \limsup_{k\rightarrow \infty} \calJ_k(x,d,\dsP)\leq \tr(G'PG\Sigma)\,.
    \end{equation}

    We apply the lower bound for $\ev_{\dsP}\left[\zeta(i)\right]$ in \cref{eq:sum_delta} and note that $|x(0)|_P^2\geq 0$ to give
    \begin{equation*}
        \calJ_k(x,d,\dsP) \geq \frac{-\ev_{\dsP}\left[|x(k)|^2_P\right]}{k} - \frac{\eta}{k}\sum_{i=0}^{k-1}\dsP\left(x(i)\notin\bbX_f\right) + \tr(G'PG\Sigma) \,.
    \end{equation*}
    From \cref{lem:bc}, there exists $p\in[0,\infty)$ such that
    \begin{equation*}
       \calJ_k(x,d,\dsP)\geq \frac{-\ev_{\dsP}\left[|x(k)|^2_P\right]- \eta p}{k} + \tr(G'PG\Sigma) \,.
    \end{equation*}
    Since $\ev_{\dsP}\left[|x(k)|^2_P\right]$ is bounded from \cref{cor:mss} we have
    \begin{equation}\label{eq:liminf}\liminf_{k\rightarrow\infty}\calJ_k(x,d,\dsP) \geq \tr(G'PG\Sigma) \,.
    \end{equation}

    We combine \cref{eq:limsup} and \cref{eq:liminf} to give \cref{eq:performance_optK}. 
    Since the choice of $d\in\calD$, $\dsP\in\calQ$, and $x\in\calX$ was arbitrary, this equality holds for all $d\in\calD$, $\dsP\in\calQ$, and $x\in\calX$. 
\end{proof}

\section{Scalable algorithms}\label{s:algorithms}

We assume for the subsequent discussion that $\varepsilon >0$ and $\theta$ is in a vectorized form, i.e., $\bfM$ is converted to a vector. We first present an exact reformulation of the DRO problem in \cref{eq:dro} and then describe the proposed NT algorithm. The Frank-Wolfe algorithm and additional details regarding the step-size are provided in \cref{s:fw}. 

\subsection{Exact reformulation}

Using existing results in \citep[Prop. 2.8]{nguyen:kuhn:mohajerin:2022} and \citep[Thm. 16]{kuhn:et-al:2019}, we provide an exact reformulation of \cref{eq:dro} via linear matrix inequalities (LMIs) to serve as a baseline for the NT algorithm.

\begin{proposition}[Exact LMI reformulation]\label{prop:sdp}
    Let \cref{as:convex,as:cost} hold and $x\in\calX$. For any $\varepsilon>0$ and $\widehat{\Sigma}\succeq 0$, the min-max problem in \cref{eq:dro} is equivalent to the program
    \begin{align}
        \inf_{\bfM,\bfv,Z,Y,\gamma} \ & |H_x x+ H_u\bfv|^2 + \sum_{k=0}^{N-1}\left(c\gamma_k + \tr(Y_k)\right) \label{eq:lmi}\\
        \textnormal{s.t. } & \gamma_k I \succeq Z_k \quad  \forall \ k\in\{0,\dots,N-1\} \nonumber \\
        & \begin{bmatrix}
            Y_k & \gamma_k\widehat{\Sigma}^{1/2} \\
            \gamma_k\widehat{\Sigma}^{1/2} & \gamma_k I - Z_{k}
        \end{bmatrix} \succeq 0  \quad  \forall \ k\in\{0,\dots,N-1\} \nonumber \\
        & \begin{bmatrix}
            Z & (H_u\bfM + H_w)' \\
            (H_u\bfM + H_w) & I
        \end{bmatrix} \succeq 0 \nonumber \\
        & (\bfM,\bfv)\in\Pi(x), \nonumber
    \end{align}
    in which $c=\varepsilon^2-\tr(\widehat{\Sigma})$ and $Z_{k}\in\bbR^{q\times q}$ is the $k^{\rm th}$ block diagonal of $Z$.    
\end{proposition}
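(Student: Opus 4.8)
The plan is to exploit two structural features of the inner maximization in \cref{eq:dro_L}: that $\bfSigma$ ranges over the block-diagonal set $\bbB^N_d$, and that $L$ depends on $\bfSigma$ only through the positive semidefinite matrix $W(\theta):=(H_u\bfM+H_w)'(H_u\bfM+H_w)$. First I would observe that, because $\bfSigma=\textnormal{diag}(\Sigma_0,\dots,\Sigma_{N-1})$, the trace term collapses to $\tr(W\bfSigma)=\sum_{k=0}^{N-1}\tr(W_{kk}\Sigma_k)$, where $W_{kk}\in\bbR^{q\times q}$ is the $k$-th diagonal block of $W$; only the diagonal blocks of $W$ enter. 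Since the constraints $\Sigma_k\in\bbB_d$ decouple across $k$, the worst-case value separates as
\[
    V_d(x,\theta) = |H_xx+H_u\bfv|^2 + \sum_{k=0}^{N-1}\max_{\Sigma_k\in\bbB_d}\tr(W_{kk}\Sigma_k).
\]

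Next I would invoke the cited Gelbrich-ball duality \citep[Prop.~2.8]{nguyen:kuhn:mohajerin:2022} and \citep[Thm.~16]{kuhn:et-al:2019}, which states that for any $M\succeq 0$
\[
    \max_{\Sigma\in\bbB_\varepsilon(\widehat{\Sigma})}\tr(M\Sigma) = \min_{\gamma\geq 0}\left\{(\varepsilon^2-\tr(\widehat{\Sigma}))\gamma + \tr(Y) \ \middle|\ \gamma I\succeq M,\ \begin{bmatrix} Y & \gamma\widehat{\Sigma}^{1/2}\\ \gamma\widehat{\Sigma}^{1/2} & \gamma I-M\end{bmatrix}\succeq 0\right\}.
\]
Strong duality applies because $\varepsilon>0$ furnishes a Slater point and $\bbB_d$ is compact (as $\bbW$, hence $\widehat{\Sigma}$ and the ball, are bounded by \cref{as:convex}). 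Applying this with $M=W_{kk}$ for each $k$ and summing renders $V_d(x,\theta)$ an LMI-representable quantity in which each $W_{kk}$ enters only through $\gamma_k I\succeq W_{kk}$ and the $(Y_k,\gamma_k)$ block.

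The remaining, and main, step is to linearize the quadratic dependence of $W_{kk}$ on $\bfM$. I would introduce a lifting matrix $Z\in\bbR^{Nq\times Nq}$ with the Schur-complement constraint
\[
    \begin{bmatrix} Z & (H_u\bfM+H_w)'\\ (H_u\bfM+H_w) & I\end{bmatrix}\succeq 0,
\]
which is equivalent to $Z\succeq W(\theta)$, and replace each $W_{kk}$ by the diagonal block $Z_k$ of $Z$. The crux is arguing that this relaxation is tight, and two facts deliver exactness: (i) $Z\succeq W$ forces $Z_k\succeq W_{kk}$ for every $k$ (test the inequality $v'Zv\geq v'Wv$ with vectors $v$ supported on the $k$-th block); and (ii) the map $M\mapsto\max_{\Sigma\in\bbB_d}\tr(M\Sigma)$ is monotone nondecreasing on the positive semidefinite cone, since $\Sigma\succeq 0$ for all $\Sigma\in\bbB_d$. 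Hence every feasible $Z$ yields a value no smaller than $\sum_k\max_{\Sigma_k\in\bbB_d}\tr(W_{kk}\Sigma_k)=V_d(x,\theta)$, whereas the choice $Z=W(\theta)$ is feasible and attains it; minimizing over $Z$ therefore recovers $V_d(x,\theta)$ exactly.

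Finally I would assemble the program: since the outer optimization over $\theta=(\bfM,\bfv)\in\Pi(x)$ is itself a minimization and leaves the constraint set $\Pi(x)$ untouched, it merges with the joint minimization over $(Z,Y,\gamma)$ into a single infimum, producing exactly \cref{eq:lmi} with $c=\varepsilon^2-\tr(\widehat{\Sigma})$. I expect the exactness of the $Z$-lifting in the third step to be the delicate point: the objective sees only the diagonal blocks of $Z$, while the Schur constraint couples the entire matrix, so it is the block-domination implication $Z\succeq W\Rightarrow Z_k\succeq W_{kk}$, paired with monotonicity of the Gelbrich worst case, that makes this partial information sufficient for an exact reformulation.
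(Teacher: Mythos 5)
Your proposal is correct and follows essentially the same route as the paper: block-wise decomposition of the worst-case trace over $\bbB_d^N$, the cited Gelbrich-ball duality for each block, and the Schur-complement lifting $Z\succeq (H_u\bfM+H_w)'(H_u\bfM+H_w)$. The only difference is cosmetic ordering (you decompose before lifting, the paper lifts via $\min_{Z\succeq\tilde{Z}(\theta)}\max_{\bfSigma}\tr(Z\bfSigma)$ first), and your explicit tightness argument---monotonicity of the worst case in the PSD order plus $Z\succeq W\Rightarrow Z_k\succeq W_{kk}$---is precisely the justification the paper leaves implicit in that min-max identity.
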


\begin{proof}
    Define $\tilde{Z}(\theta)=(H_u\bfM+H_w)'(H_u\bfM+H_w)\succeq 0$ and
    \begin{equation*}
         V_d(x,\theta) = \max_{\bfSigma\in\bbB_d^N}\tr\Big(\tilde{Z}(\theta)\bfSigma\Big) = \min_{Z\succeq \tilde{Z}(\theta)}\max_{\bfSigma\in\bbB_d^N}\tr\Big(Z\bfSigma\Big)\,.
    \end{equation*}
    From the structure of $\bfSigma\in\bbB_d^N$, we have
    \begin{equation}
        \max_{\bfSigma\in\bbB_d^N}\tr\Big(Z\bfSigma\Big) = \sum_{k=0}^{N-1}\max_{\Sigma_k\in\bbB_d}\tr\Big(Z_k\Sigma_k\Big), \label{eq:sum_Sigma}
    \end{equation}
    in which $Z_k\in\bbR^{q\times q}$ is the $k$-th block diagonal of $Z$. From \citep[Thm. 16]{kuhn:et-al:2019} and \citep[Prop. 2.8]{nguyen:kuhn:mohajerin:2022}, we can write the dual of $\max_{\Sigma_k\in\bbB_d}\tr(Z_k\Sigma_k)$ via an LMI.  
    Substituting this dual formulation into \cref{eq:sum_Sigma} and reformulating $Z \succeq (H_u\bfM+H_w)'(H_u\bfM+H_w)$ via the Schur complement gives \cref{eq:lmi}. 
\end{proof}

If $\bbZ$, $\bbX_f$, and $\bbW$ are polytopes, then this reformulation can be solved as an LMI optimization problem with standard software such as MOSEK \citep{mosek:2022}. While this LMI optimization problem can be solved quickly and reliably for small problems, larger problems are unfortunately not practically scalable compared to the usual QPs in linear MPC formulations. 

\begin{remark}[Saddle point]\label{rem:saddle}
    The set of saddle points~$(\theta^*,\bfSigma^*)$ for the min-max problem~\cref{eq:dro_L} is non-empty and compact for any $x\in\calX$ and $d\in\calD$. This fact is a classical result for the convex-concave function $L(x,\cdot)$ ensured by the convexity and compactness of $\Pi(x)$ (\cref{lem:compact}) and $\bbB_d^N$ \citep[Lemma A.6]{nguyen:shafieezadeh:kuhn:mohajerin:2023}, see for instance \cite[Prop.\,5.5.7]{ref:bertsekas2009convex}.
\end{remark}

\subsection{Newton-type saddle point algorithm}
We introduce a new algorithm that exploits the structure of the min-max problem~\eqref{eq:dro_L} through a Newton-type step. Additional information regarding the stepsize is provided in \cref{s:fw}. Code to implement this algorithm is available in the GitHub repository~\url{https://github.com/rdmcallister/drmpc}. 

We subsequently assume that $\widehat{\Sigma}\succ 0$. For notational simplicity, given any (fixed)~$x\in\calX$ and $d\in\calD$, we use the shorthand notation for the constraint $\theta\in\Pi:=\Pi(x)$ and objective function~$L_x(\cdot)=L(x,\cdot)$ in~\eqref{eq:L_obj} to rewrite the min-max~\eqref{eq:dro_L} concisely as
\begin{equation}
\label{eq:innermax}
    \min_{\theta\in\Pi} \bigg\{f(\theta) := \max_{\bfSigma\in\bbB_d^N} L_x(\theta,\bfSigma)\bigg\} \,.
\end{equation}
The structure of the set~$\bbB_d^N$ in \eqref{B^N_d} allows us to expand \cref{eq:innermax}~as 
\begin{equation}\label{eq:innermax_iid}
    f(\theta) = L_x(\theta,\mathbf{0}) + \sum_{k=0}^{N-1}\max_{\Sigma_k\in\bbB_d}\tr\big(Z_k(\theta)\Sigma_k\big),
\end{equation}
where the matrix $Z_k(\theta)\in\bbR^{q\times q}$ is the $k$\textsuperscript{th} block diagonal of $(H_u\bfM+H_w)'(H_u\bfM+H_w)$. Each maximization in \cref{eq:innermax_iid} can be solved in finite time using a bisection algorithm detailed in \cite[Alg. 2, Thm. 6.4]{nguyen:shafieezadeh:kuhn:mohajerin:2023}. Hence, we have access to
\begin{equation}\label{eq:innermax_k}
    \bfSigma^*(\theta) := \arg\max_{\bfSigma\in\bbB_d^N} L_x(\theta,\bfSigma),
\end{equation}
Since the solution to \cref{eq:innermax_k} is unique for $\widehat{\Sigma}\succ 0$~\citep[Prop. A.2]{nguyen:shafieezadeh:kuhn:mohajerin:2023}, we have from Danskin's theorem that $f(\theta)$ is convex and $\nabla f(\theta) = \nabla_{\theta} L_x(\theta, \bfSigma^0(\theta))$, i.e., the gradient of $f$ at $\theta$ is given by the gradient of $L_x(\cdot)$ with respect to $\theta$, evaluated at $(\theta,\bfSigma^0(\theta))$. Frank-Wolfe (FW) algorithms that exploit this property of the min-max program in \cref{eq:innermax} have shown promising results for DRO problems (e.g., \citep{nguyen:shafieezadeh:kuhn:mohajerin:2023,sheriff:mohajerin:2023}). However, FW algorithms are often limited to sublinear convergence rates for MPC optimization problems (see~\cref{fig:convergence} in the numerical section) because the constraint set $\Pi$ is not strongly convex (e.g., polytope) and the minimizer is frequently on the boundary of $\Pi$. Thus, we propose a Newton-type algorithm in which the search direction is determined by using a second-order approximation of~$f$ around a given value of $\theta$ and $\bfSigma^*(\theta)$:
\begin{equation}\label{eq:taylor}
    \tilde{f}(\vartheta,\theta) :=  \nabla_{\theta}L_x(\theta,\bfSigma^*(\theta))'(\vartheta-\theta) \\ + \frac{1}{2}(\vartheta-\theta)'\nabla_{\theta\theta}L_x(\theta,\bfSigma^*(\theta))(\vartheta-\theta)\,.
\end{equation}
With the above approximation, at each iteration we solve 
\begin{align}
\label{2-oracle}
    F(\theta) := \arg\min_{\vartheta\in\Pi} \tilde{f}(\vartheta,\theta)\,.
\end{align}
When $\Pi$ is a polytope, the oracle~\eqref{2-oracle} is a QP. Recall that our objective function~$L_x$ in~\eqref{eq:innermax} (originally defined in~\eqref{eq:L_obj}) is a quadratic function in $\theta$, and therefore $F(\theta)=\arg\min_{\vartheta\in\Pi} L_x(\vartheta,\bfSigma^*(\theta))$. The solution in \cref{2-oracle} defines the search direction for the iteration through the update rule
\begin{equation}\label{eq:nt_update}
   \theta_{t+1} = \theta_{t} + \eta_t\left(F(\theta_t) - \theta_t\right),
\end{equation}
in which the stepsize~$\eta_t$ is chosen according to either the adaptive or fully adaptive step-size rules 
determined by
\begin{equation}\label{eq:adaptive}
    \eta_t(\beta) = \min\left\{1, \frac{(\theta_t-F(\theta_t))'\nabla f(\theta_t)}{\beta |\theta_t-F(\theta_t)|^2}\right\} \,.
\end{equation}
The parameter~$\beta$ in \eqref{eq:adaptive} is a conservative estimate of the \emph{global} smoothness parameter for the function~$f$ in \eqref{eq:innermax}. The fully adaptive stepsize, first proposed in \cite{pedregosa:negiar:askari:jaggi:2020}, instead determines a local value of $\beta$ via backtracking line search and a quadratic sufficient decrease condition. 
In~\cref{alg:NT}, we summarize the NT algorithm with fully adaptive stepsize in pseudocode.

\begin{center}
\begin{algorithm}
    \caption{NT algorithm (fully adaptive stepsize)}
    \SetKwInOut{Output}{Output}
    \SetKwInOut{Input}{Input}
    \Input{Initial $\theta_0\in\Pi$, smoothness parameter $\beta_{-1}>0$, line search parameters $\zeta,\tau>1$}
    set $t\leftarrow 0$ \\
    \While{stopping criterion not met}{
    solve $\tilde{\theta}_t = F(\theta_t)$ \\
    set $d_t\leftarrow\tilde{\theta}_t - \theta_t$ and $g_t\leftarrow -d_t'\nabla f(\theta_t)$ \\
    set $\beta_t \leftarrow \beta_{t-1}/\zeta$ and $\eta_t \leftarrow \min\{1,g_t/(\beta_t|d_t|^2)\}$ \\
    \While{$f(\theta_t+\eta d_t) > f(\theta_t) - \eta_t g_t + (\eta_t^2\beta_t/2)|d_t|^2$}
    {
    set $\beta_t \leftarrow \tau\beta_t$ and $\eta_t\leftarrow \min\{1,g_t/(\beta_t|d_t|^2)\}$
    }
    set $\theta_{t+1}\leftarrow \theta_t + \eta_td_t$ \\
    set $t\leftarrow t+1$
    }
    \Output{$\theta_t$}
    \label{alg:NT}
\end{algorithm}
\end{center}

\section{Numerical Examples}
\label{s:examples}
We present two examples. The first is a small-scale (2 state) example that is used to demonstrate the closed-loop performance guarantees presented in \cref{s:cl} and investigate the computational performance of the proposed NT algorithm. The second is a large-scale (20 state) example, based on the Shell oil fractionator case study in \citet[s. 9.1]{maciejowski:2001}, that is used to demonstrate the scalability of the proposed NT algorithm. All optimization problems (LP, QP, or LMI) are solved with MOSEK with default parameter settings \citep{mosek:2022}. Code to generate all figures for the small-scale example is available in the GitHub repository~\url{https://github.com/rdmcallister/drmpc}. 

\subsection{Small-scale example}

We consider a two-state, two-input system in which 
\begin{equation*}
    A = \begin{bmatrix}
        0.9 & 0 \\
        0.2 & 0.8
    \end{bmatrix} \qquad 
    B = G = \begin{bmatrix}
        1 & 0 \\ 0 & 1
    \end{bmatrix}\,.
\end{equation*}
We define the input constraints $\bbU:=\{u\in\bbR^2\mid |u|_{\infty}\leq 1, \ u_2\geq 0\}$ and note that the origin is on the boundary of $\bbU$. We consider the disturbance set $\bbW:=\{w\in\bbR^2\mid |w|_{\infty}\leq 1\}$ with the nominal covariance 
\begin{equation*}
\widehat{\Sigma} = \begin{bmatrix}
0.01 & 0 \\ 0 & 0.01
\end{bmatrix}
\end{equation*}
and $\varepsilon=0.1$ for DRMPC. We choose
\begin{equation*}
    Q = \begin{bmatrix}
        0.1 & 0 \\ 0 & 10
    \end{bmatrix}
    \qquad 
    R = \begin{bmatrix}
        10 & 0 \\ 0 & 0.1
    \end{bmatrix}
\end{equation*}
and define $P\succ 0$ as the solution to the the Lyapunov equation for this system with $u=0$, i.e., $P\succ 0$ satisfies $A'PA - P = -Q$. This DRMPC formulation satisfies \cref{as:convex,as:cost,as:term,as:track}, but \textit{not} \cref{as:term_lqr}. Recall from \cref{cor:equal} that DRMPC formulations that satisfy \cref{as:term_lqr} will produce the same long-term performance as SMPC even if SMPC has an incorrect disturbance distribution.

\subsubsection*{Computational performance}

We solve the DRMPC problem for this formulation using three different methods: 1) the LMI optimization problem in \cref{eq:lmi} with MOSEK, 2) the FW algorithm, and 3) the proposed NT algorithm in \cref{alg:NT}. To compare these algorithms, we use a fixed initial condition of $x_0=\begin{bmatrix} 1 & 1
\end{bmatrix}'$ and horizon length $N=10$. We plot the convergence rate in terms of suboptimality gap ($f(\theta_t)-f^*$) for the FW and NT algorithms in \cref{fig:convergence}. For this suboptimality gap, we determine $f^*$ via the LMI optimization problem in \cref{eq:lmi}.
For both FW and NT algorithms, we consider the adaptive (A) and fully adaptive (FA) step-size rules. We terminate when the duality gap is less than $10^{-6}$.  

First, we discuss the per-iteration convergence rate shown in the top of \cref{fig:convergence}. For the FW algorithm, the convergence rate is \emph{sublinear} for both step-size rules and does not converge within $10^3$ iterations.
By contrast, the NT algorithm appears to obtain a \emph{superlinear} (perhaps quadratic) convergence rate near the optimal solution. This behavior is also observed for all other values of the initial condition and horizon length investigated. In fact, the fully adaptive NT algorithm typically converges in fewer than $5$ iterations. The significant improvement in per-iteration convergence rate ensures that the NT algorithm requires less computation time than FW despite solving a QP at each iteration (bottom plot of \cref{fig:convergence}). 

\begin{figure}[ht]
    \centering
    \includegraphics{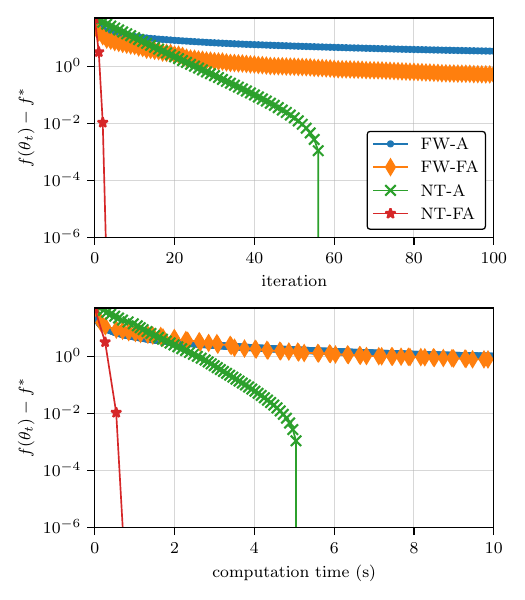}
    \caption{Convergence of FW and NT algorithms with adaptive (A) and fully adaptive (FA) stepsize calculations for a DRMPC problem ($N=10$) in terms of suboptimality gap as a function of iteration (left) and computation time (right).}
    \label{fig:convergence}
\end{figure}

In \cref{fig:comparison}, we compare the computation times required to solve the small scale DRMPC problem for difference horizon lengths $N$ via 1) the LMI optimization problem in \cref{eq:lmi} with MOSEK and 2) the NT algorithm.  
For $N\leq 5$, and therefore fewer variables and constraints, solving the DRMPC problem as an LMI optimization problem is faster. For $N>5$, however, the NT algorithm is faster than the LMI formulation. For $N\geq 15$, the computation time to solve the DRMPC problem as an LMI optimization problem is more than \textit{twice} the computation time required for the NT algorithm. 

\begin{figure}
    \centering
    \includegraphics{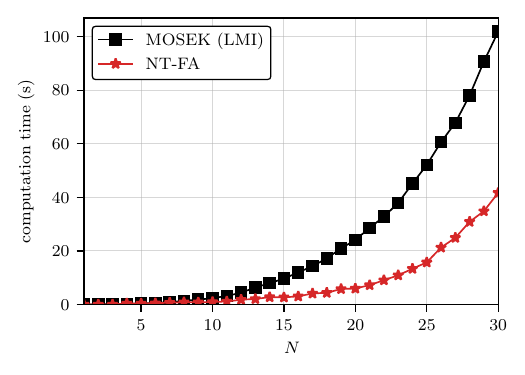}
    \caption{Comparison of computation times for the NT algorithm (fully adaptive stepsize) and LMI formulation solved by MOSEK for the horizon $N$.}
    \label{fig:comparison}
\end{figure}

\subsubsection*{Closed-loop Performance}

We initialize the state at $x(0)=\begin{bmatrix}1 & 1
\end{bmatrix}'$ and use $N=10$. In the following discussion, we consider three different controllers: DRMPC with $d=(\varepsilon,\widehat{\Sigma})$, SMPC with $d=(0,\widehat{\Sigma})$, and RMPC with $d=(0,0)$.

To demonstrate some differences between DRMPC, SMPC, and RMPC, we plot the first element of closed-loop state trajectory assuming the disturbance is zero, i.e., $w=0$, in \cref{fig:n2_nom}. RMPC drives the closed-loop state to the origin. SMPC, however, does not drive the closed-loop state to the origin even though the disturbance is zero. Since $u_2\geq 0$, the SMPC controller keeps $x_1$ slightly below the origin to mitigate the effect of positive values for $w_2$. The amount of offset is determined by the covariance of the disturbance. Since DRMPC considers a worst-case covariance for the disturbances, the offset is larger. Thus, for $||\bfw_{\infty}||=0$, the closed-loop state for DRMPC (SMPC) does not converge to the origin. The origin is therefore not ISS for DRMPC (SMPC), despite satisfying \cref{as:convex,as:cost,as:term,as:track}. By contrast, these assumptions render the origin ISS for the closed-loop system generated by RMPC \citep[Thm. 23]{goulart:kerrigan:maciejowski:2006}. 
To summarize: SMPC and DRMPC are hedging against uncertainty and thereby giving up the deterministic properties of RMPC, such as ISS, in the pursuit of improved performance in terms of the expected value of the stage cost, i.e., $\calJ_k(\cdot)$.  

\begin{figure}
    \centering
    \includegraphics{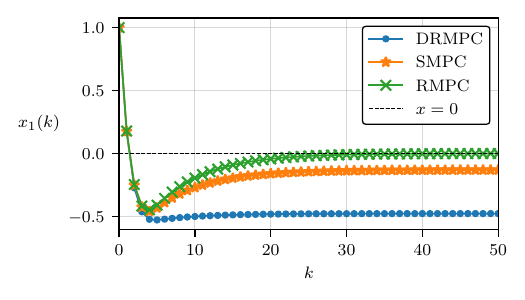}
    \caption{Closed-loop trajectories ($N=10$) with zero disturbance, i.e., $x(k)=\phi_d(k;x,\mathbf{0})$, for the first element of the state, denoted $x_1(k)$.}
    \label{fig:n2_nom}
\end{figure}

We now investigate the performance of DRMPC relative to SMPC/RMPC for a distribution $\dsP\in\calP_d^{\infty}$. Specifically, we consider $w(k)$ to be i.i.d. in time and defined as $w(k)=\Sigma^{1/2} \omega(k)$, in which $\omega_1(k),\omega_1(k)$ are independently sampled from a uniform distribution between $[-\sqrt{3},\sqrt{3}]$ and
\begin{equation*}
    \Sigma = \begin{bmatrix}
        0.01 & 0.01 \\
        0.01 & 0.035
    \end{bmatrix}\,.
\end{equation*}
Thus, $w(k)\in\bbW$, is zero mean, and has a covariance of $\Sigma$.
Note that the covariance $\widehat{\Sigma}$ used in the SMPC formulation is different than the covariance of the disturbance encountered in the closed-loop system, i.e., $\widehat{\Sigma}\neq\Sigma$.

We simulate $S=100$ different realizations of the disturbance trajectory for each controller. For each simulation $s\in\{1,\dots,S\}$, we define the closed-loop state and input trajectory $x^s(k)$ and $u^s(k)$, as well as the time-average cost
\begin{equation*}
    \calJ_k^s:=\frac{1}{k}\sum_{i=0}^{k-1}\ell(x^s(k),u^s(k))\,.
\end{equation*}
In accordance with the results in \cref{thm:performance,cor:mss}, we consider the sample average approximations of $\ev_{\dsP}\left[|\phi(k;x,\bfw_{\infty})|\right]$ and $\calJ_k(x,d,\dsP)$ defined as
\begin{equation*}
    \tilde{\ev}_{\dsP}\left[|x(k)|^2\right] := \frac{1}{S}\sum_{s=1}^{S} |x^s(k)|^2 \qquad \tilde{\calJ}_k:= \frac{1}{S}\sum_{s=1}^{S}\calJ_k^s\,.
\end{equation*}

In \cref{fig:n2_stats}, we plot $\tilde{\ev}_{\dsP}\left[|x(k)|^2\right]$ and $\tilde{\calJ}_k$. For each algorithm, we observe an initial, exponential decay in the mean-squared distance $\tilde{\ev}_{\dsP}\left[|x(k)|^2\right]$ towards a constant, but nonzero, value. These results for DRMPC are consistent with \cref{cor:mss}. We note, however, that DRMPC produces the largest value of $\tilde{\ev}[|x(k)|^2]$, i.e., the mean-squared distance between the closed-loop state and the setpoint is larger for DRMPC than for SMPC or RMPC. While this result may initially seem counter-intuitive, the objective prescribed to the DRMPC problem is to minimize the expected value of the stage cost, not the expected distance to the origin. In terms of the expected value of the stage cost, i.e., $\tilde{\calJ}_k$, the performance of DRMPC is better than SMPC, which is better than RMPC. This difference becomes more pronounced as $k\rightarrow\infty$. 

\begin{figure}
    \centering
    \includegraphics{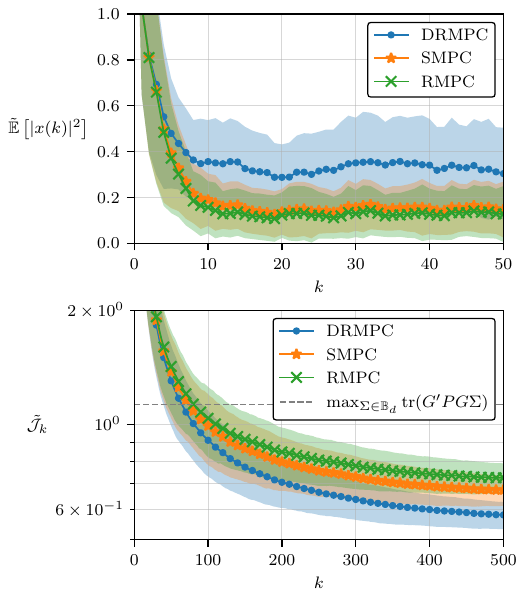}
    \caption{Sample averages of $\ev_{\dsP}\left[|\phi(k;x,\bfw_{\infty})|^2\right]$ and $\calJ_k(x,d,\dsP)$, denoted $\tilde{\ev}_{\dsP}\left[|x(k)|^2\right]$ and $\tilde{\calJ}_k$, for $S=100$ realizations of the closed-loop trajectory. Shaded regions show plus/minus one standard deviation.}
    \label{fig:n2_stats}
\end{figure}

We note that $\Sigma\in\bbB_d$ in this example is intentionally chosen to exacerbate the effect of the disturbance on $x_2$ and thereby increase the cost of the closed-loop trajectory, i.e., a worst-case distribution. Therefore, DRMPC produces a superior control law relative to SMPC. If the ambiguity set, however, becomes too large relative to this value of $\Sigma$, the additional conservatism of DRMPC can produce worse performance than SMPC in terms of $\tilde{\calJ}_k$ for a fixed value of $\Sigma$. To demonstrate this tradeoff, we consider the same closed-loop simulation and plot the value of $\tilde{\calJ}_T$ at $T=500$ for various values of $\varepsilon$ and fixed $\widehat{\Sigma}$. 
In \cref{fig:n2_rhos}, we observe that $\varepsilon\approx 0.11$ achieves the minimum value of $\calJ_T$, with an approximately 13\% decrease in the value of $\tilde{\calJ}_T$ compared to $\varepsilon=0.01$. For values of $\varepsilon> 0.11$, the value of $\tilde{\calJ}_T$ increases significantly until leveling off around $\varepsilon=1$. For large values  of $\varepsilon$, DRMPC is too conservative because $\Sigma$ is now well within the interior of $\bbB_d$. 

Another interesting feature in \cref{fig:n2_rhos} is that the range of values for $\calJ_T^s$ for each $s\in\{1,\dots,30\}$, shown by the shaded region, decreases as $\varepsilon$ increases. This behavior might also be explained by the increased conservatism of DRMPC as $\varepsilon$ increases. As the value of $\varepsilon$ increase, 
we might expect DRMPC to drive the closed-loop system to an operating region that attenuates the effect of all disturbances on the closed-loop cost at the expense of nominal performance. Thus, the closed-loop system becomes less sensitive to disturbances and thereby decreases the variability in performance at the expense of an increase in average performance.
In summary, we are left with the classic trade-off in robust controller design; If we choose $\varepsilon$ too large, the excessively conservative DRMPC may perform worse than SMPC ($\varepsilon=0$) even if the disturbance distribution used for SMPC is incorrect. Thus, the design goal for DRMPC is to select a value of $\varepsilon$ that balances these two extremes.

\begin{figure}
    \centering
    \includegraphics{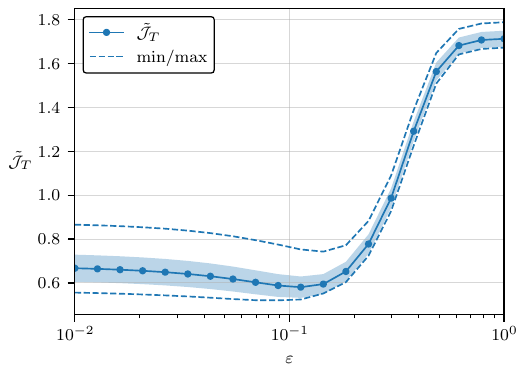}
    \caption{Sample average of the performance metric $\calJ_T(x,d,\dsP)$, denoted $\tilde{\calJ}_T$, for $T=500$ as a function of the ambiguity radius $\varepsilon$ for $S=30$ realizations of the disturbance trajectory. The shaded region shows plus/minus one standard deviation and the dashed lines indicate the min/max values of $\tilde{\calJ}_T^s$ for all $s$.}
    \label{fig:n2_rhos}
\end{figure}

\subsection{Large scale example: Shell oil fractionator}

To demonstrate the applicability of the Newton-type algorithm to control problems of an industrially relevant size, we now consider the Shell oil fractionator example in \citet[s. 9.1]{maciejowski:2001} with $n=20$ states, $m=3$ inputs, and $p=3$ outputs. We include two disturbances ($q=2$): the intermediate reflux duty and the upper reflux duty.  

We consider $\bbU:= \left\{u\in\bbR^3\mid |u|_{\infty}\leq 1, \ u_3\geq 0\right\}$ in which the origin is again on the boundary of the input constraint $\bbU$. The disturbance set is $\bbW:=\{w\in\bbR^2\mid |w|_{\infty}\leq 1\}$ with the nominal covariance $\widehat{\Sigma}=0.01I_2$
and ambiguity an radius of $\varepsilon=0.1$. 
The outputs $y$ satisfy $y=Cx$ and we define cost matrices as
$Q=C'Q_yC$ and $R=0.1I_3$ in which $Q_y=\textnormal{diag}([20, \ 10, \ 1])$. 
We then define the terminal cost matrix $P\succ 0$ as the solution to the Lyapunov equation $A'PA-P=-Q$ because $A$ is Schur stable. This DRMPC problem formulation satisfies \cref{as:convex,as:cost,as:term} and $(A,Q^{1/2})$ is detectable (See \cref{rem:detectable}). However, this formulation does not satisfy \cref{as:term_lqr} so the performance of DRMPC and SMPC may differ (see \cref{cor:equal}).

We initialize the state at $x(0)=0$ and use $N=10$. We consider the performance of the closed-loop system in which $w(k)$ is sampled from a zero-mean uniform distribution with a covariance of $\Sigma = \textnormal{diag}([0.04, \ 0.01])$. 
Note that $\Sigma\in\bbB_d$ but $\Sigma\neq\widehat{\Sigma}$. We simulate $T=500$ time steps for $S=30$ realizations of the disturbance trajectory. We plot $\tilde{\calJ}_k$ in \cref{fig:oil_stats} for DRMPC, SMPC, and RMPC. At $T=500$, we observe an almost negligible 0.2\% decrease in $\tilde{\calJ}_{T}$ for DRMPC relative to SMPC. The standard deviation of the closed-loop performance is also nearly identical for all three controllers. Longer horizons may increase this difference, but we expect the overall benefit of DRMPC to remain small and therefore not worth the extra computation.   

\begin{figure}
    \centering
    \includegraphics{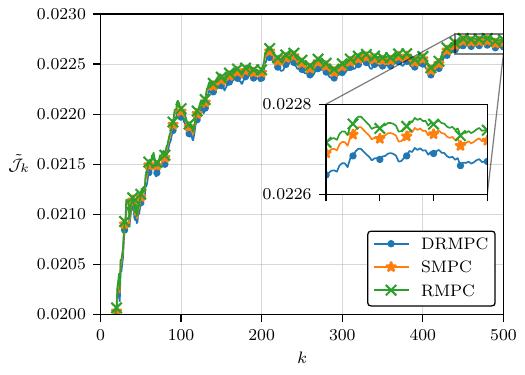}
    \caption{Sample average of $\calJ_k(x,d,\dsP)$, denoted $\widehat{\calJ}_k$, for $S=30$ realizations of the closed-loop trajectory.}
    \label{fig:oil_stats}
\end{figure}

\section{Summary and comparison}

In \cref{tab:comparison}, we summarize some key observations from the theoretical analysis, algorithm development, and numerical examples covered in this work if \cref{as:term} holds, but \cref{as:term_lqr} does not hold. If \cref{as:term_lqr} also holds, then SMPC and DRMPC guarantee ISS (\cref{thm:iss}) but only transient performance benefits are achievable regardless of the value of $\Sigma$ and $\widehat{\Sigma}$ (\cref{cor:equal}). The key characteristics of a control problem that may justify the extra computational expense of DRMPC are: (1) the covariance of the disturbance is large, but not well known, (2) the origin (target steady-state) is near input/state constraints, and (3) performance in terms of stage cost is more important than ISS. 

\def\arraystretch{1.5}
\begin{table}[ht]
    \centering
    \caption{Comparison between R/S/DRMPC with \cref{as:term}.}
    \begin{tabular}{c|ccc}
        MPC type & Preferred if & ISS & Computation \\ \hline
         RMPC & $\Sigma$ is small & Yes & QP \\[2mm] 
         SMPC & \makecell{$\Sigma$ is large, \\ $\widehat{\Sigma}\approx \Sigma$} & No & Harder QP \\[3mm] 
         DRMPC & \makecell{$\Sigma$ is large, \\ $\widehat{\Sigma}$ is poor estimate of $\Sigma$} & No & SDP (several QPs) \\[2mm] \hline
    \end{tabular}
    \label{tab:comparison}
\end{table}

\appendix 

\section{Additional technical proofs and results}
\label{s:appendix}

\begin{proof}[Proof of \cref{lem:compact}]
    From \citet[Thm. 3.5]{goulart:2007} we have that $\Pi(x)$ is closed and convex for all $x\in\calX$ and $\calX$ is closed and convex. We now establish that $\Pi(x)$ is bounded. If $(\bfM,\bfv)\in\Pi(x)$, then $\bfM\bfw + \bfv \in \bbU^N$ for all $\bfw\in\bbW^N$. Since $0\in\bbW$, we have that for any $(\bfM,\bfv)\in\Pi(x)$, $\bfv$ must satisfy $\bfv\in\bbU^N$ 
    Moreover, since the origin is in the interior of $\bbW$, there exists $\delta>0$ such that $B_{\delta}:=\{\bfw\in\bbR^{Nq}\mid |\bfw|\leq\delta \}\subseteq\bbW$. Since $\bbU$ is bounded, there exists $b\geq 0$, such that $|\bfu|\leq b$ for all $\bfu\in\bbU^N$. For any $(\bfM,\bfv)\in\Pi(x)$, we have 
    \begin{equation}\label{eq:M_bound}
        |\bfM\bfw| \leq 2b \quad \forall \ \bfw\in B_{\delta}
    \end{equation}
    because $\bfv\in\bbU^N$. We have that \cref{eq:M_bound} is equivalent to
    \begin{equation*}
         \lVert \bfM\rVert_2 := \sup\left\{|\bfM\bfw| \ \middle| \ |\bfw|\leq 1 \right\} \leq 2b/\delta
    \end{equation*}
    and we can construct a bounded set for $\bfM$ as follows:
    \begin{equation*}
        \bfM \in \mathbb{M} := \left\{ \bfM\in\bbR^{Nm\times Nq} \ \middle| \ ||\bfM||_2 \leq 2b/\delta \right\} 
    \end{equation*}
    for all $(\bfM,\bfv)\in\Pi(x)$. Therefore, $(\bfM,\bfv)\in\Pi(x) \subseteq \mathbb{M} \times \bbU^N$ for all $x\in\calX$. Since $\bbU$ and $\mathbb{M}$ are bounded, $\Pi(x)$ is bounded as well, uniformly for all $x\in\calX$.  
\end{proof}

\begin{lemma}\label{lem:calD}
    If $\bbW$ is bounded, then $\calD$ is bounded. 
\end{lemma}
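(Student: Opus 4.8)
The plan is to use the boundedness of $\bbW$ to cap the trace of every covariance realizable by a distribution in $\calM(\bbW)$, and then exploit the defining achievability condition of $\calD$ to convert this cap into explicit bounds on both $\widehat{\Sigma}$ and $\varepsilon$. Concretely, since $\bbW$ is bounded there is $\bar{w}>0$ with $|w|\le\bar{w}$ for all $w\in\bbW$; hence for any $\dsP\in\calM(\bbW)$ its covariance $\Sigma=\ev_{\dsP}[ww']$ satisfies $\tr(\Sigma)=\ev_{\dsP}[|w|^2]\le\bar{w}^2$. For $(\varepsilon,\widehat{\Sigma})\in\calD$, the definition of $\calD$ forces every $\Sigma\in\bbB_d$ to be realizable by some such $\dsP$, so every $\Sigma\in\bbB_d$ obeys $\tr(\Sigma)\le\bar{w}^2$. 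The whole argument then reduces to showing that this trace bound on the ball $\bbB_d$ constrains $\widehat{\Sigma}$ and $\varepsilon$ separately.

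First I would bound $\widehat{\Sigma}$. A direct computation shows the Gelbrich self-distance vanishes, i.e.\ the center satisfies $\widehat{\Sigma}\in\bbB_d$: at $\Sigma=\widehat{\Sigma}$ one has $(\widehat{\Sigma}^{1/2}\widehat{\Sigma}\,\widehat{\Sigma}^{1/2})^{1/2}=\widehat{\Sigma}$, so the bracketed trace equals $2\tr(\widehat{\Sigma})-2\tr(\widehat{\Sigma})=0\le\varepsilon^2$. Achievability of $\widehat{\Sigma}$ then gives $\tr(\widehat{\Sigma})\le\bar{w}^2$, and since $\widehat{\Sigma}\succeq 0$ this bounds all of its eigenvalues, hence $\widehat{\Sigma}$ itself, uniformly over $\calD$.

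The key step is bounding $\varepsilon$, for which I would exhibit a single covariance in $\bbB_d$ whose trace grows with $\varepsilon$. The natural candidate is $\Sigma^*:=(\widehat{\Sigma}^{1/2}+\tfrac{\varepsilon}{\sqrt{q}}I)^2\succeq 0$, which is engineered to commute with $\widehat{\Sigma}$. Because $\Sigma^*$ and $\widehat{\Sigma}$ are simultaneously diagonalizable, the matrix geometric-mean term simplifies to $(\widehat{\Sigma}^{1/2}\Sigma^*\widehat{\Sigma}^{1/2})^{1/2}=\widehat{\Sigma}^{1/2}(\Sigma^*)^{1/2}$, so the Gelbrich distance collapses to the Frobenius distance of the square roots, $\tr\big(\widehat{\Sigma}+\Sigma^*-2(\widehat{\Sigma}^{1/2}\Sigma^*\widehat{\Sigma}^{1/2})^{1/2}\big)=\|\widehat{\Sigma}^{1/2}-(\Sigma^*)^{1/2}\|_F^2=\|\tfrac{\varepsilon}{\sqrt{q}}I\|_F^2=\varepsilon^2$, which places $\Sigma^*\in\bbB_d$. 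Expanding the trace gives $\tr(\Sigma^*)=\tr(\widehat{\Sigma})+\tfrac{2\varepsilon}{\sqrt{q}}\tr(\widehat{\Sigma}^{1/2})+\varepsilon^2\ge\varepsilon^2$, so achievability of $\Sigma^*$ yields $\varepsilon^2\le\tr(\Sigma^*)\le\bar{w}^2$, i.e.\ $\varepsilon\le\bar{w}$.

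Combining the two estimates, every $(\varepsilon,\widehat{\Sigma})\in\calD$ satisfies $\varepsilon\le\bar{w}$ and $\tr(\widehat{\Sigma})\le\bar{w}^2$, so $\calD$ is bounded. I expect the only delicate point to be verifying that the candidate $\Sigma^*$ genuinely lies in $\bbB_d$; this is exactly what the commuting construction buys us, since it reduces the otherwise awkward Gelbrich distance (with its $(\widehat{\Sigma}^{1/2}\Sigma\widehat{\Sigma}^{1/2})^{1/2}$ term) to an elementary Frobenius-norm identity and sidesteps any appeal to the general variational characterization of the Bures--Wasserstein distance.
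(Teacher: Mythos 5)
Your proof is correct, and its skeleton matches the paper's: boundedness of $\bbW$ caps the trace of every achievable covariance, the center $\widehat{\Sigma}$ lies in $\bbB_d$ (zero self-distance) and is therefore achievable, which bounds $\widehat{\Sigma}$; what remains is to bound $\varepsilon$. Where you genuinely depart from the paper is in that last step. The paper argues by ball nesting: with $\rho := \sup\{\tr(\Sigma)^{1/2} : \Sigma \text{ achievable}\}$, it shows every achievable covariance lies in $\bbB_{(2\rho,\widehat{\Sigma})}$ (by dropping the PSD cross term in the Gelbrich distance), so that $\bbB_{(\varepsilon,\widehat{\Sigma})} \subseteq \mathbb{S} \subseteq \bbB_{(2\rho,\widehat{\Sigma})}$, and concludes $\varepsilon \leq 2\rho$. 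That final implication---nested Gelbrich balls with a common center have ordered radii---is asserted without proof and tacitly requires the existence of matrices at Gelbrich distance exceeding $2\rho$ from $\widehat{\Sigma}$. Your construction supplies exactly such a witness explicitly: $\Sigma^* = \big(\widehat{\Sigma}^{1/2} + \tfrac{\varepsilon}{\sqrt{q}}I\big)^2$ commutes with $\widehat{\Sigma}$, sits at Gelbrich distance exactly $\varepsilon$ via the Frobenius-of-square-roots identity, and has $\tr(\Sigma^*) \geq \varepsilon^2$, so achievability forces $\varepsilon \leq \bar{w}$. This buys you two things: a cleaner and sharper constant ($\varepsilon \leq \bar{w}$ rather than $\varepsilon \leq 2\rho$), and a fully self-contained argument that avoids both the implicit ball-ordering claim and the paper's unnecessary (and, in general, false) assertion that the achievable set $\mathbb{S}$ \emph{equals} $\bbB_{(\rho,0)}$---equality can fail, e.g., when $\bbW$ is much thinner in one coordinate direction than another, though only the inclusion $\mathbb{S} \subseteq \bbB_{(\rho,0)}$ is ever used. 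Your one flagged concern, membership $\Sigma^* \in \bbB_d$, is indeed handled rigorously by the commuting construction: $\widehat{\Sigma}^{1/2}\Sigma^*\widehat{\Sigma}^{1/2} = \big(\widehat{\Sigma}^{1/2}(\Sigma^*)^{1/2}\big)^2$ with $\widehat{\Sigma}^{1/2}(\Sigma^*)^{1/2} \succeq 0$, so uniqueness of the PSD square root gives the claimed simplification.
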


\begin{proof}
    Define the set $\mathbb{S}:=\{\Sigma = \ev_{\dsP}\left[ww'\right] \mid \dsP\in\calM(\bbW)\}$ and note that $\mathbb{S}$ is bounded because $\bbW$ is bounded. By definition, $\bbB_d\subseteq\mathbb{S}$ for all $d\in\calD$. Since $\widehat{\Sigma}\in \bbB_d$ for all $d=(\varepsilon,\widehat{\Sigma})$, then $\widehat{\Sigma}\in\mathbb{S}$ for all $d\in\calD$ and therefore $\calD\subseteq \bbR_{\geq 0}\times \mathbb{S}$.
    
    Define $\rho:=\sup_{\Sigma\in\mathbb{S}}\tr(\Sigma)^{1/2}<\infty$. Therefore, $\mathbb{S}=\bbB_{(\rho,0)}$. Choose any $\widehat{\Sigma}\in\mathbb{S}$. If $\Sigma\in\bbB_{(\rho,0)}$, then 
    \begin{equation*}
        \tr\left(\widehat{\Sigma} + \Sigma - 2\left(\widehat{\Sigma}^{1/2}\Sigma\widehat{\Sigma}^{1/2}\right)^{1/2}\right) \leq \tr\left(\widehat{\Sigma}\right) + \tr\left(\Sigma\right) \leq 2\rho
    \end{equation*}
    and therefore $\Sigma\in\bbB_{(2\rho,\hat{\Sigma})}$. Thus, $\mathbb{S}\subseteq\bbB_{(\rho,0)}\subseteq \bbB_{(2\rho,\hat{\Sigma})}$. For all $(\varepsilon,\widehat{\Sigma})\in\calD$, we have $\bbB_{(\varepsilon,\widehat{\Sigma})} \subseteq \mathbb{S} \subseteq \bbB_{(2\rho,\hat{\Sigma})}$ and therefore $\varepsilon\leq 2\rho$. Hence, $\calD\subseteq [0,2\rho]\times\mathbb{S}$ and $\calD$ is bounded.
\end{proof}

\section{Optimal terminal cost}\label{app:optP}

Since the term $\tr(G'PG\Sigma)$ appears on the right-hand side of all bounds in \cref{thm:performance,thm:resie,cor:mss}, a straightforward design strategy for DRMPC is to select the value of $P$ that minimizes this term subject to the constraints in \cref{as:term}. In the following lemma, we show that the value of $P$ that achieves this goal is given by \cref{eq:dare} in \cref{as:term_lqr}.  

\begin{lemma}[Optimal terminal cost]\label{lem:optP}
    For any $Q,R\succ 0$, a solution to 
    \begin{align*}
        \inf_{P\succ 0,K_f} \ & \tr(G'PG\Sigma) \\ 
        \textnormal{s.t. } &  P - Q - K_f'RK_f \succeq (A+BK_f)'P(A+BK_f) 
    \end{align*}
    is given by the solution to the DARE \cref{eq:dare}
    and the associated controller $K_f:=-(R+B'PB)B'PA$ for all $G\in\bbR^{m\times q}$ and $\Sigma\succeq 0$.
\end{lemma}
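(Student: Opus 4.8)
The plan is to show that the stabilizing DARE solution $P^\star$ from \eqref{eq:dare}, together with its gain $K_f^\star = -(R+B'P^\star B)^{-1}B'P^\star A$, is both feasible and furnishes the \emph{minimal} feasible $P$ in the positive semidefinite order; monotonicity of the objective in $P$ then yields optimality. First I would verify feasibility of $(P^\star,K_f^\star)$: by the standard completion-of-squares identity underlying the DARE, $(A+BK_f^\star)'P^\star(A+BK_f^\star) + Q + (K_f^\star)'RK_f^\star = P^\star$, so the constraint holds with equality, and since $Q\succ 0$ one checks $P^\star\succeq Q\succ 0$, so $P^\star$ lies in the feasible set and attains objective value $\tr(G'P^\star G\Sigma)$.

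The heart of the argument is the claim that every feasible $(P,K_f)$ satisfies $P\succeq P^\star$, which I would establish by an LQR/telescoping argument. The constraint rewritten as $P - (A+BK_f)'P(A+BK_f)\succeq Q + K_f'RK_f \succ 0$ (strict because $Q\succ 0$) is a discrete-time Lyapunov inequality with $P\succ 0$; the standard eigenvalue test then forces $A_{cl}:=A+BK_f$ to be Schur stable. Applying the linear policy $u_k=K_fx_k$ to $x_{k+1}=A_{cl}x_k$ and telescoping the constraint from $k=0$ to $N-1$ gives $x_0'Px_0 \geq \sum_{k=0}^{N-1}\ell(x_k,u_k) + x_N'Px_N$; Schur stability sends $x_N\to 0$, so $x_0'Px_0 \geq \sum_{k=0}^{\infty}\ell(x_k,u_k)$. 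As $u_k=K_fx_k$ is a stabilizing policy, this sum is at least the optimal infinite-horizon LQR cost, which equals $x_0'P^\star x_0$ (the stabilizing DARE solution being the optimal cost-to-go matrix). Since $x_0$ is arbitrary, $P\succeq P^\star$.

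Finally I would close by monotonicity of the objective: for any feasible $P$, $G'(P-P^\star)G\succeq 0$ and $\Sigma\succeq 0$ give $\tr\big(G'(P-P^\star)G\,\Sigma\big)\geq 0$, i.e.\ $\tr(G'PG\Sigma)\geq \tr(G'P^\star G\Sigma)$, uniformly in $G\in\bbR^{m\times q}$ and $\Sigma\succeq 0$. Hence $(P^\star,K_f^\star)$ attains the infimum.

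The main obstacle is the minimality claim $P\succeq P^\star$; the two delicate points are (i) justifying Schur stability of $A_{cl}$ purely from the Lyapunov inequality, which relies on the strict sign guaranteed by $Q\succ 0$, and (ii) invoking that the stabilizing DARE solution coincides with the optimal LQR cost, which underlies the final inequality. A purely algebraic alternative would note that minimizing the right-hand side over $K_f$ reproduces the Riccati operator $\mathcal{R}(P)=A'PA - A'PB(R+B'PB)^{-1}B'PA + Q$, recasting the constraint as the Riccati inequality $P\succeq\mathcal{R}(P)$ and appealing to a comparison theorem; I would nonetheless prefer the control-theoretic route above, as the telescoping argument makes the minimality transparent.
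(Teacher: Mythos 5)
Your proof is correct, and it reaches the key minimality claim by a genuinely different route than the paper. The paper argues purely algebraically: it changes variables to $P = P^* - S$ with $S \succeq 0$, invokes the Bellman-type matrix inequality $(A+BK_f)'P^*(A+BK_f) \succeq P^* - Q - K_f'RK_f$ (valid for \emph{every} $K_f$ because $P^*$ is the optimal unconstrained infinite-horizon LQR cost), combines it with the constraint to obtain $(A+BK_f)'S(A+BK_f) \succeq S$, and then uses Schur stability of $A+BK_f$ (extracted from the Lyapunov-type constraint via $Q \succ 0$, exactly as in your step (i)) to force $S = 0$. You instead prove the stronger order-theoretic statement that \emph{every} feasible $(P,K_f)$ satisfies $P \succeq P^*$, by telescoping the constraint along closed-loop trajectories and comparing with the optimal LQR cost, and then finish by monotonicity of $\tr\big(G'(\cdot)G\Sigma\big)$. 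Both proofs rest on the same two ingredients---Schur stability from the Lyapunov inequality and LQR optimality of the DARE solution---but your trajectory-wise version buys something concrete: the paper's reformulation only examines candidates of the form $P = P^* - S$ with $S \succeq 0$ (i.e., $P \preceq P^*$) and declares that restricted problem ``equivalent,'' which taken literally leaves open whether a feasible $P$ \emph{incomparable} to $P^*$ could achieve a smaller trace against a fixed, possibly low-rank $D = G\Sigma G'$; your global bound $P \succeq P^*$ is exactly what closes that gap, and it delivers optimality uniformly in $G$ and $\Sigma$ in one stroke. (Incidentally, the paper's own algebra, run without the sign restriction on $S$, also yields $P \succeq P^*$, so the two arguments are reconcilable.) You also verify feasibility of $(P^*,K_f^*)$ explicitly via completion of squares and $P^*\succeq Q\succ 0$, which the paper leaves implicit.
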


\Cref{lem:optP} suggests that, if possible, one should always select $P$ according to \cref{as:term_lqr} to minimize the value of $\tr(G'PG\Sigma)$ for all $\Sigma\succeq 0$. Therefore, \cref{as:term_lqr}, in addition to being a convenient and common method to select $P$, also produces the best theoretical bound for the performance of the closed-loop system. 

\begin{proof}[Proof of \cref{lem:optP}]
    Consider the unique solution $P^*\succ 0$ that satisfies \cref{eq:dare} and choose any $G\in\bbR^{m\times q}$ and $\Sigma\succeq 0$. We denote $D:=G\Sigma G'$ and note that $D\succeq 0$. We consider the equivalent optimization problem
\begin{subequations}
    \begin{align}
        \inf_{K_f,S} \ & \tr((P^*-S) D) \\ 
        \textnormal{s.t. } &  P^* - S - Q - K_f'RK_f \succeq (A+BK_f)'(P^*-S)(A+BK_f) \label{eq:Lyap_Pstar}\\
        & P^*-S \succ 0, \quad S \succeq 0 \label{eq:pd_Pstar}
    \end{align}
\end{subequations}
    If the solution to this optimization problem is $S=0$ for all $D\succeq 0$, then $P^*$ must be a solution to the original optimization problem. 
    
    We now prove that $S=0$. Assume $K_f\in\bbR^{m\times n}$ and $S\succeq 0$ exist such that \cref{eq:Lyap_Pstar,eq:pd_Pstar} hold. Note that this inequality implies that $A+BK_f$ is Schur stable. Since $P^*$ defines the optimal cost of the infinite horizon unconstrained LQR problem, any $K_f\in\bbR^{m\times n}$ must satisfy
    \begin{equation}\label{eq:Pstar}
        (A+BK_f)'(P^*)(A+BK_f) \succeq P^* - Q - K_f'RK_f
    \end{equation}
    By combining \cref{eq:Lyap_Pstar,eq:Pstar}, we have that $K_f$ and $S\succeq 0$ must satisfy
    \begin{equation}\label{eq:S}
        (A+BK_f)'S(A+BK_f) \succeq S
    \end{equation}
    However, if there exists $S\neq 0$ satisfying \cref{eq:S} with $S\succeq 0$, then $A+BK_f$ is not Schur stable. Since $A+BK_f$ is Schur stable, the only value of $S\succeq 0$ that satisfies \cref{eq:S} is $S=0$. Therefore, $S=0$ is the only solution to the modified optimization problem and $P^*$ is the solution to the original optimization problem. Note that since the choices of $G\in\bbR^{m\times q}$ and $\Sigma\succeq 0$ were arbitrary, this solution holds for any $G\in\bbR^{m\times q}$ and $\Sigma\succeq 0$.     
\end{proof}

\section{Fundamental mathematical properties of DRMPC formulation}\label{app:problem}

We now establish some fundamental mathmatical properties of \cref{eq:dro} to ensure that subsequently defined quantities are indeed well-defined. 

We first introduce some notation and definitions. A function $f:X\rightarrow \bbR$ is lower semincontinuous if $\liminf_{x\rightarrow x_0} f(x) \geq f(x_0)$ for all $x_0\in X$. Let $\calB(\Omega)$ denote the Borel field of some set $\Omega$, i.e., the subsets of $\Omega$ generated through relative complements and countable unions of all open subsets of $\Omega$. For the metric spaces $X$ and $Y$, a function $f:X\rightarrow Y$ is Borel measurable if for each open set $O\subseteq Y$, we have $f^{-1}(O):=\{x\in X\mid f(x)\in O\}\in\calB(X)$. For the metric spaces $X$ and $Y$, a set-valued mapping $F:X\rightrightarrows Y$ is Borel measurable if for every open set $O\subseteq Y$, we have $F^{-1}(O):=\{x\in X \mid F(x)\cap O\neq\emptyset\}\in\calB(X)$ \citep[Def. 14.1]{rockafellar:wets:2009}.

\begin{proposition}[Existence and measurability]\label{prop:exist}
    If \cref{as:convex,as:cost} hold, then for all $d\in\calD$, $V_d:\calX\times\Theta\rightarrow \bbR_{\geq 0}$ is convex and continuous, $V_d^0:\calX\rightarrow \bbR_{\geq 0}$ is convex and lower semicontinuous, $\theta^0:\calX\rightrightarrows\Theta$ is Borel measurable, and $\theta^0_d(x)\neq\emptyset$ for all $x\in\calX$.  
\end{proposition}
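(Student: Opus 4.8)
My plan is to prove the four assertions in the order (i) the regularity of $V_d$, (ii) nonemptiness of $\theta^0_d(x)$, (iii) the regularity of $V_d^0$, and (iv) measurability of $\theta^0$, using throughout the compactness and convexity structure recorded in \cref{lem:compact}. First I would observe that the integrand $L(x,\theta,\bfSigma)$ from \cref{eq:L_obj} is jointly continuous in $(x,\theta,\bfSigma)$, is nonnegative (it is a sum of the squared norm $|H_xx+H_u\bfv|^2$ and $\tr\big((H_u\bfM+H_w)'(H_u\bfM+H_w)\bfSigma\big)$ with $\bfSigma\succeq 0$), and, for each fixed $\bfSigma\succeq 0$, is \emph{jointly} convex in $(x,\theta)$: the first term is the square of an affine map of $(x,\theta)$, while the second equals $\tr\big(\bfSigma^{1/2}(H_u\bfM+H_w)'(H_u\bfM+H_w)\bfSigma^{1/2}\big)$, i.e.\ the squared Frobenius norm of the affine map $\bfM\mapsto (H_u\bfM+H_w)\bfSigma^{1/2}$. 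Since $V_d(x,\theta)=\max_{\bfSigma\in\bbB_d^N}L(x,\theta,\bfSigma)$ in \cref{eq:V_d} is a pointwise supremum over $\bfSigma$ of these functions, $V_d\ge 0$ and $V_d$ is jointly convex. Continuity then follows from Berge's maximum theorem applied to the inner problem: the feasible correspondence $\bfSigma\in\bbB_d^N$ is a fixed compact set (a constant, hence continuous, correspondence) and $L$ is continuous, so $V_d:\calX\times\Theta\to\bbR_{\ge 0}$ is continuous.

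For existence, fix $x\in\calX$; by \cref{lem:compact} the set $\Pi(x)$ is nonempty, compact, and convex, so the continuous function $V_d(x,\cdot)$ attains its minimum over $\Pi(x)$ by Weierstrass, giving $\theta^0_d(x)\neq\emptyset$. Next, the graph $\gph\Pi=\{(x,\theta):x\in\calX,\ \theta\in\Pi(x)\}$ is convex, being the intersection over $\bfw\in\bbW^N$ of the preimages of the convex sets $\bbZ$ and $\bbX_f$ under the maps $(x,\theta)\mapsto(x(k),u(k))$ and $(x,\theta)\mapsto x(N)$, which are affine in $(x,\theta)$ by \cref{eq:bfu,eq:bfx}. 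Consequently $V_d^0(x)=\inf\{V_d(x,\theta):(x,\theta)\in\gph\Pi\}$ is the partial minimization of a jointly convex function over a jointly convex set, hence convex \citep[Prop.\,3.3.3]{ref:bertsekas2009convex}, and it inherits nonnegativity from $V_d\ge 0$.

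Lower semicontinuity of $V_d^0$ I would obtain from the semicontinuous half of the maximum theorem for minimization. From the proof of \cref{lem:compact}, $\Pi(x)\subseteq\mathbb{M}\times\bbU^N$ for a fixed compact set, so $\Pi$ is uniformly bounded; moreover $\gph\Pi$ is closed, being the intersection over $\bfw$ of the closed conditions $(x(k),u(k))\in\bbZ$ and $x(N)\in\bbX_f$ with $\bbZ,\bbX_f$ closed. A uniformly bounded, closed-graph correspondence is upper hemicontinuous and compact-valued; combined with $V_d$ continuous (in particular lower semicontinuous), this makes $V_d^0(x)=\min_{\theta\in\Pi(x)}V_d(x,\theta)$ lower semicontinuous. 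I would stress that this cannot be upgraded to continuity: $\Pi$ need not be lower hemicontinuous at the boundary of $\calX$, so $V_d^0$ may fail to be upper semicontinuous, which is exactly why $\theta^0$ need not have a closed graph and only measurability is claimed.

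The delicate step, and the main obstacle, is the measurability of $\theta^0$. The naive route through a closed graph of $\theta^0$ fails precisely because $\Pi$ is only upper hemicontinuous. I would therefore cast the outer problem as a parametric minimization of a normal integrand: define $\bar L(x,\theta):=V_d(x,\theta)$ for $\theta\in\Pi(x)$ and $\bar L(x,\theta):=+\infty$ otherwise. Since $V_d$ is jointly continuous and $\Pi$ is a measurable (its upper hemicontinuity just established implies measurability) closed-valued correspondence, $\bar L$ is a normal integrand; the argmin mapping of a normal integrand is Borel measurable on the set where it is nonempty \citep[Thm.\,14.37]{rockafellar:wets:2009}. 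As $\theta^0_d(x)=\arg\min_{\theta\in\Pi(x)}V_d(x,\theta)$ is nonempty for every $x\in\calX$ by the second step, this yields Borel measurability of $\theta^0:\calX\rightrightarrows\Theta$ in the sense of \citep[Def.\,14.1]{rockafellar:wets:2009}. An alternative I would mention is to note that $\gph\theta^0=\gph\Pi\cap\{(x,\theta):V_d(x,\theta)\le V_d^0(x)\}$ is Borel (since $V_d$ is continuous and $V_d^0$ is lower semicontinuous, hence Borel) and then invoke a measurable-projection argument, but the normal-integrand route is cleaner and avoids the analytic-set subtleties of projecting a Borel graph.
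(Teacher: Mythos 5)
Your proof is correct, but it takes a different route than the paper on the two nontrivial points. The paper handles continuity and convexity of $V_d$ and existence of minimizers exactly as you do (maximum theorem over the compact set $\bbB_d^N$, pointwise maximum of convex functions, Weierstrass on the compact set $\Pi(x)$), but it then disposes of lower semicontinuity of $V_d^0$ \emph{and} Borel measurability of $\theta^0_d$ in a single stroke: it notes that the graph set $\calZ$ is closed and contained in $\bbR^n\times(\mathbb{M}\times\bbU)$ with $\mathbb{M}\times\bbU$ compact, and invokes the parametric minimization result \citep[Prop.~7.33]{bertsekas:shreve:1996}, which simultaneously yields attainment, lower semicontinuity of the value function, and measurability. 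You instead prove lower semicontinuity by hand (closed graph plus uniform boundedness gives an upper hemicontinuous compact-valued $\Pi$, then the standard subsequence argument), and you get measurability of the argmin mapping from normal-integrand theory, writing $V_d+\delta_{\Pi(\cdot)}$ as a Carath\'eodory function plus the indicator of a measurable closed-valued correspondence and citing \citep[Thm.~14.37]{rockafellar:wets:2009}. Both routes are sound; the paper's citation is more economical, while your decomposition is more self-contained and, for the set-valued measurability claim specifically, arguably cleaner, since the Bertsekas--Shreve proposition is stated in terms of a measurable \emph{selector} rather than measurability of the full argmin correspondence in the sense of \citep[Def.~14.1]{rockafellar:wets:2009} that the paper actually asserts. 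Two further remarks: you explicitly prove convexity of $V_d^0$ via partial minimization of a jointly convex function over the convex graph of $\Pi$ — a claim that appears in the statement of \cref{prop:exist} but is only argued later in the paper (in the proof of \cref{thm:iss}, via the same \citep[Prop.\,3.3.3]{ref:bertsekas2009convex}) — so your treatment is actually more complete on this point; and you assert compactness of $\bbB_d^N$ without justification, where the paper cites \citep[Lemma A.6]{nguyen:shafieezadeh:kuhn:mohajerin:2023}, a minor gap worth patching with a citation or a one-line closedness-plus-boundedness argument for the Gelbrich ball.
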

\begin{proof}
    We have that $L(\cdot)$ is continuous and $\bbB_d^N$ is compact \citep[Lemma A.6]{nguyen:shafieezadeh:kuhn:mohajerin:2023}. Therefore, $V_d(x,\theta)$ is continuous for all $(x,\theta)\in\bbR^n\times\Theta$ \citep[Thm. C.28]{rawlings:mayne:diehl:2020}. 
    We also have that $L(x,\theta,\bfSigma)$ is convex in $(x,\theta)\in\bbR^n\times\Theta$ for all $\bfSigma\in\bbB_d^N$. From Danskin's theorem, we have that $V_d(x,\theta)$ is also convex. Since $V_d(x,\theta)$ is continuous and $\Pi(x)$ is compact for each $x\in\calX$, we have that $\theta^0_d(x)\neq\emptyset$, i.e., the minimum is attained, for all $x\in\calX$.    
    Since $\bbZ$ and $\bbX_f$ are closed, we have that
    \begin{equation*}
    \calZ := \bigcap_{\bfw\in\bbW^N} \left\{
	(x,\bfM,\bfv)\in\bbR^n\times\Theta\ \middle| \ \begin{matrix} 
	\bfx = \bfA x + \bfB \bfv + (\bfB\bfM + \bfG)\bfw \\
	\bfu = \bfM\bfw + \bfv \\ 
	(x(k),u(k))\in\bbZ \ \forall \ k\in\bbI_{0:N-1} \\
	x(N) \in \bbX_f \end{matrix} \right\}
    \end{equation*}
    is also closed. From the Proof of \cref{lem:compact}, we have that $\calZ\subseteq\bbR^n\times(\mathbb{M}\times\bbU)$ in which $\bbU$ and $\mathbb{M}$ are compact. Therefore,
    \begin{equation*}
        V^0_d(x) = \min_{\theta}\left\{V_d(x,\theta) \mid (x,\theta)\in\calZ \right\}
    \quad \text{and} \quad 
        \theta^0_d(x) = \arg\min_{\theta}\left\{V_d(x,\theta) \mid (x,\theta)\in\calZ \right\}
    \end{equation*}
    From \citet[Prop. 7.33]{bertsekas:shreve:1996}, we have that $V_d^0:\calX\rightarrow\bbR_{\geq 0}$ is lower semicontinuous and $\theta^0:\calX\rightrightarrows\Theta$ is Borel measurable.  
\end{proof}
Since $\theta^0_d(x)$ is Borel measurable and $\bbU$ is compact, we have from \cref{prop:exist} and \citep[Lemma 7.18]{bertsekas:shreve:1996} that there exists a selection rule such that $\kappa_d:\calX\rightarrow\bbU$ is also Borel measurable. Thus, the closed-loop system $\phi(k;x,\bfw_{\infty})$ is measurable with respect to $\bfw_{\infty}$ for all $k\geq 0$. Moreover, all probabilities and expected values for the closed-loop system $\phi(\cdot)$ are well defined. 

We also have the following corollary to \cref{prop:exist} if $\bbZ$, $\bbX_f$, and $\bbW$ are polytopes. 
\begin{corollary}[Continuity of optimal value function]\label{cor:contV}
    If \cref{as:convex,as:cost} hold and $\bbZ,\bbX_f,\bbW$ are polyhedral, then for all $d\in\calD$, $V^0_d:\calX\rightarrow\bbR_{\geq 0}$ is continuous. 
\end{corollary}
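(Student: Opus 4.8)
The plan is to upgrade the lower semicontinuity of $V_d^0$ already furnished by \cref{prop:exist} to full continuity by separately establishing \emph{upper} semicontinuity, and then combining the two. The natural tool is a Berge-type argument for the parametric minimization $V_d^0(x) = \min_{\theta\in\Pi(x)} V_d(x,\theta)$: since $V_d$ is jointly continuous on $\calX\times\Theta$ (again by \cref{prop:exist}), upper semicontinuity of $V_d^0$ will follow once we know that the feasible-set multifunction $x\mapsto\Pi(x)$ is \emph{inner} (lower) semicontinuous relative to its domain $\calX$. This last property is precisely where the polyhedral hypothesis enters; without it the convex map $\Pi$ may fail to be inner semicontinuous at boundary points of $\calX$, which is exactly why \cref{prop:exist} on its own yields only lower semicontinuity.

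First I would record that, when $\bbZ$ and $\bbX_f$ are polyhedral and $\bbW$ is a polytope, the set $\calZ$ appearing in the proof of \cref{prop:exist} is a genuine polyhedron: the intersection over $\bfw\in\bbW^N$ collapses to the finitely many vertices of the polytope $\bbW^N$ (cf.\ \citep[Example 7]{goulart:kerrigan:maciejowski:2006}), so $\calZ$ is cut out by finitely many affine inequalities in $(x,\theta)$. Consequently $\Pi(x)=\{\theta\mid(x,\theta)\in\calZ\}$ is a polyhedral multifunction, i.e.\ one whose constraints depend affinely on the parameter $x$, say $\Pi(x)=\{\theta\mid \bar A\theta\le \bar b + \bar D x\}$.

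The key step is then to deduce inner semicontinuity of this polyhedral map from Hoffman's error bound, which supplies a constant $L$ with $\mathrm{dist}(\theta_0,\Pi(x))\le L\,\|(\bar A\theta_0-\bar b-\bar D x)_+\|$ for every $x\in\calX$. Fixing $x_0\in\calX$ and $\theta_0\in\Pi(x_0)$, feasibility at $x_0$ makes the relevant residual nonpositive up to the perturbation in $x$, so $(\bar A\theta_0-\bar b-\bar D x)_+\le(\bar D(x_0-x))_+$ componentwise and hence $\mathrm{dist}(\theta_0,\Pi(x))\le L\|\bar D\|\,|x-x_0|$. Therefore, for any $x_n\to x_0$ in $\calX$ (so $\Pi(x_n)\neq\emptyset$), one can select $\theta_n\in\Pi(x_n)$ with $\theta_n\to\theta_0$, which is exactly the inner semicontinuity we need.

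Finally I would assemble the Berge argument. Pick $x_0\in\calX$ and, using nonemptiness of $\theta^0_d(x_0)$ from \cref{prop:exist}, a minimizer $\theta_0\in\theta^0_d(x_0)$. For $x_n\to x_0$ in $\calX$, inner semicontinuity yields $\theta_n\in\Pi(x_n)$ with $\theta_n\to\theta_0$, whence $V_d^0(x_n)\le V_d(x_n,\theta_n)\to V_d(x_0,\theta_0)=V_d^0(x_0)$ by continuity of $V_d$; thus $\limsup_n V_d^0(x_n)\le V_d^0(x_0)$, i.e.\ $V_d^0$ is upper semicontinuous. Combined with the lower semicontinuity from \cref{prop:exist}, this gives continuity of $V_d^0$ on $\calX$ for every $d\in\calD$. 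The main obstacle is the inner-semicontinuity step: it is the only place the polyhedral assumption is genuinely used, and it relies on Hoffman's error bound (equivalently, on Robinson's theory of polyhedral multifunctions) to control how fast the feasible set can shrink as $x$ approaches the boundary of $\calX$.
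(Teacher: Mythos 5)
Your proof is correct: the vertex-collapse argument does make $\calZ$ a genuine polyhedron, Hoffman's error bound (with constant depending only on the constraint matrix $\bar A$, not on the right-hand side) gives the Lipschitz-type inner semicontinuity of $x\mapsto\Pi(x)$ on $\calX$, and the Berge argument for upper semicontinuity combined with the lower semicontinuity from \cref{prop:exist} yields continuity. The paper reaches the same conclusion by citation rather than by a self-contained argument: after noting that $\calX$ is polyhedral \citep[Cor. 3.8]{goulart:2007}, that $\calZ$ is contained in a product of $\bbR^n$ with bounded sets (proof of \cref{lem:compact}), and that $V_d$ is jointly continuous (proof of \cref{prop:exist}), it invokes \citep[Thm. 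C.34]{rawlings:mayne:diehl:2020}, a maximum theorem for parametric minimization over a polyhedral constraint set, which delivers continuity of $V^0_d$ in one shot and outer semicontinuity of $\theta^0_d$ as a byproduct. Your route unpacks essentially the mechanism hidden inside that cited theorem---it is precisely the continuity of polyhedral feasible-set maps that makes a Berge-type argument applicable---so the two proofs rest on the same underlying content, but they are organized differently: the paper uses a one-step black-box invocation, while you decompose into lsc (inherited from \cref{prop:exist}) plus usc (proved from scratch). What your version buys is transparency: it isolates exactly where polyhedrality is indispensable (the inner-semicontinuity step, which can fail for general convex $\Pi$ at boundary points of $\calX$) and it yields the quantitative estimate $\mathrm{dist}\big(\theta_0,\Pi(x)\big)\le L\|\bar D\|\,|x-x_0|$, which is stronger than the qualitative inner semicontinuity actually needed. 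What the paper's version buys is brevity and the additional conclusion about the solution map $\theta^0_d$, which your argument does not address (though it could, by a standard compactness argument on minimizing sequences).
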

\begin{proof}
    Since $\bbZ,\bbX_f,\bbW$ are polyhedral, we have from \citet[Cor. 3.8]{goulart:2007} $\calX$
    is polyhedral. From the Proof of \cref{lem:compact}, we have that $\calZ\subseteq\bbR^n\times(\mathbb{M}\times\bbU)$ in which $\bbU$ and $\mathbb{M}$ are bounded. Recall from the Proof of \cref{prop:exist} that $V_d:\bbR^n\times\Theta\rightarrow\bbR^n$ is continuous. Therefore,
    \begin{equation*}
        V^0_d(x) = \min_{\theta}\left\{V_d(x,\theta) \mid (x,\theta)\in\calZ \right\}
    \quad \text{and} \quad 
        \theta^0_d(x) = \arg\min_{\theta}\left\{V_d(x,\theta) \mid (x,\theta)\in\calZ \right\}
    \end{equation*}
    From \citet[Thm. C.34]{rawlings:mayne:diehl:2020}, we have that $V^0_d:\calX\rightarrow\bbR$ is continuous and $\theta^0_d:\calX \rightrightarrows \Theta$ is outer semicontinuous. 
\end{proof}

\section{Frank-Wolfe algorithm and step-sizes}\label{s:fw}

Frank-Wolfe algorithms that exploit the structure of the min-max program in \cref{eq:dro} have shown promising results for similar DRO problems (e.g., \citep{nguyen:shafieezadeh:kuhn:mohajerin:2023,sheriff:mohajerin:2023}). Thus, we propose such an algorithm here based on these previous algorithms. 

Analogous to the NT algorithm, we assume that $\widehat{\Sigma}\succ 0$ and for given any (fixed) $x\in\calX$ and $d\in\calD$, we use the shorthand notation for the constraint $\theta\in\Pi:=\Pi(x)$ and objective function $L_x(\cdot)=L(x,\cdot)$ in~\cref{eq:L_obj} to rewrite the min-max optimization problem in~\cref{eq:dro_L} concisely as \cref{eq:innermax}. The structure of the set~$\bbB_d^N$ in \eqref{B^N_d} allows us to expand \cref{eq:innermax}~as 
\begin{equation}\label{eq:innermax_iid_fw}
    f(\theta) = L_x(\theta,\mathbf{0}) + \sum_{k=0}^{N-1}\max_{\Sigma_k\in\bbB_d}\tr\big(Z_k(\theta)\Sigma_k\big),
\end{equation}
where the matrix $Z_k(\theta)\in\bbR^{q\times q}$ is the $k$\textsuperscript{th} block diagonal of $(H_u\bfM+H_w)'(H_u\bfM+H_w)$. Each maximization in \cref{eq:innermax_iid_fw} can be solved in finite time using a bisection algorithm detailed in \cite[Alg. 2, Thm. 6.4]{nguyen:shafieezadeh:kuhn:mohajerin:2023}. Hence, we assume to have access to
\begin{equation}\label{eq:innermax_k_fw}
    \bfSigma^*(\theta) := \arg\max_{\bfSigma\in\bbB_d^N} L_x(\theta,\bfSigma),
\end{equation}

Since the solution to \cref{eq:innermax_k_fw} is unique for $\widehat{\Sigma}\succ 0$ \citep[Prop. A.2]{nguyen:shafieezadeh:kuhn:mohajerin:2023}, we have from Danskin's theorem that $f(\theta)$ is convex and 
\begin{equation*}
    \nabla f(\theta) = \nabla_{\theta} L_x(\theta, \bfSigma^0(\theta)) 
\end{equation*}
i.e., the gradient of $f(\cdot)$ at $\theta$ is given by the gradient of $L_x(\cdot)$ with respect to $\theta$, evaluated at $(\theta,\bfSigma^0(\theta))$. 
Thus, we can define the gradient of $f(\cdot)$ as a quasi-analytic expression and the first-order oracle as
\begin{align}
\label{FW_oracle}
    F_{1}(\theta) := \arg\min_{\vartheta\in\Pi}\nabla f(\theta)'\vartheta
\end{align}
If the set $\Pi$ is a polytope, the oracle \cref{FW_oracle} is a linear program (LP). 
The solution in \cref{FW_oracle} defines the search direction for each iteration of the FW algorithm through the update rule:
\begin{equation}\label{eq:fw_update}
    \theta_{t+1} = \theta_t + \eta_t\left(F_1(\theta_t) - \theta_t\right)
\end{equation}
in which $\eta_t\in(0,1]$ is the step-size, chosen according to some (adaptive) rule that is subsequently introduced. 

For either the FW or NT algorithm, the adaptive stepsize rule is
\begin{equation}\label{eq:adaptive_fw}
    \eta_t(\beta) = \min\left\{1, \frac{(\theta_t-F_1(\theta_t))'\nabla f(\theta_t)}{\beta |\theta_t-F_1(\theta_t)|^2}\right\}
\end{equation}
in which $\beta$ is the global smoothness parameter for $f(\cdot)$, i.e., $\beta>0$ satisfies
\begin{equation*}
    |\nabla f(\theta_1) - \nabla f(\theta_2)| \leq \beta | \theta_1-\theta_2| \qquad \forall \ \theta_1,\theta_2\in\Pi
\end{equation*}
Note that we do not verify that $f(\theta)$ defined in \cref{eq:innermax} is in fact $\beta$-smooth. 

The fully adaptive stepsize rule, can improve the convergence of the FW (or NT) algorithm by replacing the global smoothness parameter $\beta$ in \cref{eq:adaptive_fw} with an adaptive smoothness parameter $\beta_t$ \citep{pedregosa:negiar:askari:jaggi:2020}. We require this $\beta_t$ to satisfy the inequality
\begin{equation}\label{eq:fadaptive}
    f\Big(\theta_t + \eta_t(\beta_t)\big(F_1(\theta_t)-\theta_t\big)\Big) \leq f(\theta_t)  + \eta_t(\beta_t)\big(F_1(\theta_t)-\theta_t\big)'\nabla f(\theta_t) + \frac{1}{2}\beta_t\eta_t(\beta_t)^2|F_1(\theta_t)-\theta_t|^2
\end{equation}
in which $\eta_t(\beta_t)$ is the adaptive stepsize calculation in \cref{eq:adaptive_fw}. The value of $\beta_t$ at each iteration is chosen according to a backtracking line search algorithm. Specifically, $\beta_t$ is chosen as the smallest element of the discrete search space $(\beta_{t-1}/\zeta)\cdot\{1,\tau,\tau^2,\tau^3,\dots\}$ that satisfies \cref{eq:fadaptive}, in which $\zeta,\tau>1$ are prescribed line search parameters. 

Unfortunately, Frank-Wolfe algorithms for MPC optimization problems are often limited to sublinear convergence rates because the constraint set $\Pi$ is not strongly convex (e.g., polytope) and the solution to the optimization problem is frequently on the boundary of $\Pi$. We observe this same limitation for DRMPC as demonstrated in \cref{fig:convergence}.

\section{Derivatives}

\begin{equation*}
    \frac{\partial L}{\partial \bfv} = 2H_u'H_x x + 2H_u'H_u\bfv
\end{equation*}

\begin{align*}
    \frac{\partial L}{\partial \bfM} & = \frac{\partial}{\partial \bfM}\tr\left(\bfM'H_u'H_u\bfM\bfSigma\right) + \frac{\partial}{\partial \bfM}2\tr\left(H_w'H_u\bfM\bfSigma\right) +  \frac{\partial}{\partial \bfM}2\tr\left(H_w'H_w\bfSigma\right)\\
    & = 2H_u'H_u\bfM\bfSigma + 2H_u'H_w\bfSigma
\end{align*}

\bibliographystyle{abbrvnat}
\bibliography{paper}

\end{document}